\documentclass[11pt,oneside,reqno]{amsart}
\usepackage{mathpazo} 
\normalfont
\usepackage[T1]{fontenc}

\usepackage[utf8]{inputenc}
\usepackage{amsmath}
\usepackage{amsfonts}
\usepackage{amssymb}
\usepackage{amsthm}
\usepackage{comment}
\usepackage{tikz}
\usepackage{tikz-cd}
\usepackage[all]{xy}
\usepackage[margin=1.2in]{geometry}
\usepackage{amscd}
\usepackage[shortlabels]{enumitem}
\usepackage{stmaryrd}

\DeclareMathOperator{\Spec}{Spec}

\newcommand\Proj{\mathbb{P}_{\mathbb{B}}^n}

\DeclareRobustCommand{\coprod}{\mathop{\text{\fakecoprod}}}
\newcommand{\fakecoprod}{%
	\sbox0{$\prod$}%
	\smash{\raisebox{\dimexpr.9625\depth-\dp0}{\scalebox{1}[-1]{$\prod$}}}%
	\vphantom{$\prod$}%
}

\newcommand{\angles}[1]{\left\langle #1 \right\rangle}

\input xy
\xyoption{all}
\usepackage[colorinlistoftodos]{todonotes}

\definecolor{darkred}{rgb}{0.7,0,0} 

\theoremstyle{definition}
\newtheorem{mydef}{\textbf{Definition}}[section]
\newtheorem{example}[mydef]{\textbf{Example}}

\newtheorem{observ}[mydef]{\textbf{Observation}}

\newtheorem{rmk}[mydef]{\textbf{Remark}}

\newtheorem{myeg}[mydef]{Example}

\theoremstyle{plain}
\newtheorem{mythm}[mydef]{\textbf{Theorem}}

\newtheorem*{nothma}{\textbf{Theorem A}}
\newtheorem*{nothmb}{\textbf{Theorem B}}
\newtheorem*{nothmc}{\textbf{Theorem C}}
\newtheorem*{nothmd}{\textbf{Theorem D}}

\newtheorem{lem}[mydef]{\textbf{Lemma}}
\newtheorem{pro}[mydef]{\textbf{Proposition}}

\newtheorem{corollary}[mydef]{Corollary}

\newtheorem{theorem}[mydef]{Theorem}
\newtheorem{conjecture}[mydef]{Conjecture}
\newtheorem{lemma}[mydef]{Lemma}

\newtheorem{proposition}[mydef]{Proposition}
\newtheorem{definition}[mydef]{Definition}

\newcommand{\Bend}{\text{Bend}}
\newcommand{\Supp}{\text{Supp}}

\begin{document}

\title{Prime Ideals and Projective Closure in the Boolean Polynomial Semiring}

\author{Tristan Bishop}
\address{State University of New York at New Paltz, NY, USA}
\email{bishopt1@newpaltz.edu}

\author{Chris Eppolito}
\address{State University of New York at New Paltz, NY, USA}
\email{eppolito.math@gmail.com}

\author{Radford Green}
\address{Johns Hopkins University, MD, USA}
\email{rgreen87@jh.edu}

\author{Jaiung Jun}
\address{State University of New York at New Paltz, NY, USA}
\email{junj@newpaltz.edu}

\author{Alex Norwood}
\address{State University of New York at New Paltz, NY, USA}
\email{norwooda1@newpaltz.edu}

\author{Daisy Ivy Thackrah}
\address{Cardiff University, Cardiff, Wales}
\email{thackrahd@cardiff.ac.uk}

\makeatletter
\@namedef{subjclassname@2020}{%
	\textup{2020} Mathematics Subject Classification}
\makeatother

\subjclass[2020]{12K10, 14T10, 15A80, 06B10}
\keywords{Prime ideal, Prime congruence, Congruence spectrum, Tropical algebra, Tropical geometry, Projective closure, Tropical semifield. Boolean semifield}
\thanks{}

\begin{abstract}
In this paper, we investigate two related questions concerning the Boolean polynomial semiring in two variables. First, we study prime ideals in $\mathbb{B}[x,y]$, beginning with a complete classification of those generated by linear polynomials. Second, motivated by the limitations of ideals in capturing geometric phenomena over semirings, we introduce a notion of projective closure via congruences. We prove that this projective closure coincides with the topological closure in the projective space of prime congruences.
\end{abstract}

\maketitle

\section{Introduction}

A semiring is an algebraic structure satisfying the same axioms as a ring, except that additive inverses are not required. Examples range from familiar semirings such as the natural numbers $\mathbb{N}$ and the nonnegative real numbers $\mathbb{R}_{\geq 0}$ to more exotic ones, such as the tropical semifield $\mathbb{T}$, whose underlying set is $\mathbb{R}\cup\{-\infty\}$, with semiring addition given by the maximum operation and semiring multiplication given by ordinary addition of real numbers. Its subsemifield, the Boolean semifield $\mathbb{B}=\{0,-\infty\}$, is another fundamental example. The tropical and Boolean semifields belong to the important class of \emph{idempotent semirings}, characterized by the identity $a+a=a$ for every element $a$. On the other hand, $\mathbb{N}$ and $\mathbb{T}$ share another important property: for a semiring $S$, whenever $a+b=0_S$, one necessarily has $a=b=0_S$.\footnote{This property is commonly referred to as the \emph{zero-sum-free} (or \emph{negative-free}) property.}

Although these examples may appear somewhat unusual at first, semirings arise naturally in many contexts. For instance, every totally ordered abelian group $G$ gives rise to a semifield $\widetilde{G}=G\cup\{-\infty\}$, where addition is given by the maximum operation and multiplication is the group law. The tropical semifield $\mathbb{T}$ (resp.~the Boolean semifield $\mathbb{B}$) is obtained in this way from $\mathbb{R}$ (resp.~the trivial group $\{0\}$). Another important example comes from commutative algebra: if $R$ is a commutative ring, then the collection of ideals of $R$, equipped with ideal addition and ideal multiplication, forms a commutative semiring. Semirings of this form were studied by the fourth named author in \cite{jun2020lattices}, where an idempotent semiring analogue of Hochster's theorem characterizing spectral spaces was established.

Semiring theory has long been studied as a subject in its own right and has found applications in areas such as optimization, automata theory, and theoretical computer science. In recent years, it has also become increasingly important in tropical geometry, where idempotent semirings and semifields provide the natural algebraic framework. In particular, the tropical semifield $\mathbb{T}$ plays a central role in the theory.

Tropical geometry is a branch of algebraic geometry in which algebraic varieties are studied via their tropicalizations, which are polyhedral complexes encoding substantial information about the original varieties. Given an algebraic variety over a valued field, one obtains its tropicalization by applying the valuation in a coordinate-wise way. When the valuation is nontrivial, the tropicalization naturally takes values in the tropical semifield $\mathbb{T}$, whereas the trivial valuation gives rise to combinatorial objects over the Boolean semifield $\mathbb{B}$.\footnote{For readers familiar with matroid theory, a linear subspace tropicalizes to a valuated matroid in the presence of a nontrivial valuation and to an ordinary matroid under the trivial valuation.} Although $\mathbb{T}$ provides the fundamental algebraic framework for tropical geometry, many questions can first be studied over the Boolean semifield $\mathbb{B}$, which is considerably more tractable while still capturing many of the essential combinatorial features of the theory; see \cite{bernd} for an introduction.

Semiring theory not only provides an algebraic foundation for tropical geometry but also opens up several research directions in another emerging area, $\mathbb{F}_1$-geometry. The goal of $\mathbb{F}_1$-geometry is, broadly speaking, twofold: (1) to develop a theory of schemes over more general bases, such as the semiring $\mathbb{N}$\footnote{See, for instance, \cite{toen2009dessous}.}, and (2) to identify combinatorial cores underlying geometric structures, following the original motivation of Tits \cite{tits1956analogues}. From this perspective, it is natural to consider algebraic geometry over bases such as $\mathbb{N}$, $\mathbb{R}_{\geq 0}$, and other semirings.

For example, a scheme $X_{\mathbb{Z}}$ over $\mathbb{Z}$ may admit a model over $\mathbb{N}$, and one can then study its Boolean fiber $X_{\mathbb{B}}$ to extract combinatorial information about $X_{\mathbb{Z}}$. This perspective was outlined in \cite{borger2024facets}; see also Zelich's thesis \cite{zelich2026miracle} for further developments of this viewpoint. Culling's thesis \cite{culling} also provides very interesting perspective in this direction. 

There are also other important applications of semiring theory. For instance, Connes and Consani have explored the role of semirings in their approach to algebraic and arithmetic geometry \cite{con5,con4,connes2017homological}. In addition, Baker--Bowler's theory of matroids with coefficients is closely related to semiring theory.\footnote{In particular, recent approaches using bands and band schemes, as developed by Baker, Jin, and Lorscheid \cite{baker2025new}, are deeply connected with semiring-theoretic methods.} 

In algebraic geometry, ideals and prime ideals play a fundamental role in making a dictionary between algebra and geometry. But, for semirings, ideals and prime ideals may play a less central role as it may not precisely reflect the geometry. In fact, the absence of additive inverses prevents many familiar arguments from working in the same way as in ring theory. For instance, for a semiring homomorphism $\varphi:R \to R/I$ for an ideal $I$, $\ker(\varphi)$ may be strictly bigger than $I$. A more pathological example is that the fiber product of any two irreducible prime spectra over $\mathbb{B}$ is always irreducible; see \cite[Proposition 5.10]{jun2024equivariant}.

One approach to address this issue is to restrict attention to special classes of ideals in the idempotent setting. For example, a $k$-ideal (also called a subtractive ideal) is an ideal $I$ satisfying the condition that whenever $a+b\in I$ and $a\in I$, then $b\in I$. This condition serves as a substitute for subtraction and restores some desirable properties of ideals in the absence of additive inverses. However, the class of $k$-ideals is often too restrictive and does not capture all naturally occurring ideals in algebraic geometry over semirings. Another important approach is the theory of tropical ideals introduced by Maclagan and Rinc\'on \cite{maclagan2018tropical,maclagan2016tropical}. Tropical ideals are defined through combinatorial conditions and capture many essential features of tropical geometry.

A different and more intrinsic approach is to replace ideals by congruences. A congruence on a semiring $S$ is an equivalence relation $\sim$ on $S$ that is compatible with addition and multiplication; namely, if $a\sim b$ and $c\sim d$, then $(a+c)\sim (b+d)$ and $(ac)\sim (bd)$. In the category of rings, ideals and congruences are essentially equivalent, since an ideal determines a congruence via the relation $a\sim b$ if and only if $a-b$ belongs to the ideal. This correspondence breaks down for semirings because additive inverses are not available. Thus, congruences provide a more natural replacement for ideals and allow one to recover geometric structures in a broader setting. 

In tropical geometry, congruences offer a natural framework for extending geometric ideas from rings to semirings. Several aspects of this perspective have been studied by Jo\'o and Mincheva \cite{joo2018prime,joo2026varieties} and Friedenberg and Mincheva \cite{friedenberg2026geometric}. In particular, Tanaka \cite{Tanaka26} developed a geometric theory of congruences that provides the foundation for the viewpoint adopted in this paper.

\subsection{Summary of Our Results}

In this paper, we explore two questions. The first concerns the structure of prime ideals in $\mathbb{B}[x,y]$, the Boolean polynomial semiring in two variables. In \cite{mincheva2025prime}, Mincheva and Sakran classified the prime ideals of the one-variable Boolean polynomial semiring $\mathbb{B}[t]$, building on earlier work of Alarc\'on and Anderson \cite{alarcon1994commutative}. We extend this line of investigation to $\mathbb{B}[x,y]$. A complete classification of all prime ideals in $\mathbb{B}[x,y]$ appears to be a challenging problem. As a first step, we classify the prime ideals generated by linear polynomials and prove the following result.

\begin{nothma}[Section \ref{section: prime ideals}]
The following is the complete list of prime ideals of $\mathbb{B}[x,y]$, generated by linear polynomials.
\[
\angles{x,y}, \angles{x,y,1+x,1+y}, \angles{x,y,1+x}, \angles{x,y,1+y},  \angles{x}, \angles{y}.
\]
\end{nothma}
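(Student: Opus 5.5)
The plan is to reduce to a finite enumeration and then handle each candidate directly. The nonzero linear polynomials of $\mathbb{B}[x,y]$ are $1,\ x,\ y,\ x+y,\ 1+x,\ 1+y,\ 1+x+y$. A proper ideal cannot contain $1$, so an ideal generated by linear polynomials is $\angles{S}$ for some subset $S$ of the six non-constant ones. That gives at most $2^6-1$ candidates.

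\textbf{Step 1: identify equal ideals.} I first collapse the candidates using obvious containments. For instance, $x,y\in I$ forces $x+y\in I$. Likewise $1+x\in I$ and $y\in I$ give $1+x+y=(1+x)+y\in I$. Hence any $S\supseteq\{x,y\}$ generates the same ideal as $\{x,y\}$ together with whichever of $1+x$, $1+y$ lie in $S$, with $1+x+y$ redundant in those cases. This reduces to the six listed ideals plus a short list of remaining ones, such as
\[
\angles{x+y},\ \angles{1+x},\ \angles{1+x+y},\ \angles{x,x+y},\ \angles{x,1+y},\ \angles{x+y,1+x},\ \ldots
\]
Each remaining ideal has to be shown either equal to a listed one or non-prime.

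\textbf{Step 2: describe the listed ideals by supports and prove primality.} For each of the six listed ideals I will give an explicit membership criterion in terms of the support $\Supp(f)\subseteq\mathbb{N}^2$.
\begin{itemize}
\item $\angles{x,y}$ is the set of $f$ with no constant term.
\item $\angles{x}$ is the set of $f$ with every monomial divisible by $x$; similarly for $\angles{y}$.
\item $\angles{x,y,1+x}$ is the set of $f$ that either have no constant term, or have some monomial $x^i$ with $i\ge1$. I will check this by writing a general element $xg+yh+(1+x)k$ and tracking the constant and pure-$x$ terms.
\item $\angles{x,y,1+y}$ is the analogous set with $y$ in place of $x$.
\item $\angles{x,y,1+x,1+y}$ is the set of $f$ that either lack a constant term or contain a pure power $x^i$ or $y^j$ of positive degree.
\end{itemize}
Primality then follows by examining the complements. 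For example, the complement of $\angles{x,y,1+x}$ consists of polynomials with constant term $1$ and no pure positive $x$-power. A product of two such polynomials keeps constant term $1$. Its pure-$x$ part is the product of the pure-$x$ parts of the factors, which here is just $1$, and there is no cancellation in $\mathbb{B}$. So the complement is multiplicatively closed. The other cases are the same argument, using that $\mathbb{B}[x,y]$ has no zero divisors and that supports add under multiplication.

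\textbf{Step 3: exhibit witnesses of non-primality.} For each unlisted ideal $I$, I will find $f,g\notin I$ with $fg\in I$, or show that $I$ equals a listed ideal. The natural approach is to use the support description of $I$ together with the idempotent identities
\[
(a+b)^2=a^2+ab+b^2 \quad\text{and}\quad (1+x)(1+y)=1+x+y+xy.
\]
For $\angles{1+x}$, a candidate argument is that $(1+x)^2=1+x+x^2$ lies in the ideal, and I would look for factorizations of elements of the form $(1+x)g$ into two factors outside the ideal, for instance using $1+x^2$ and $1+x+x^2$. For ideals containing $x+y$, I would use multiples such as $(x+y)(x^2+y^2)$ and search for alternative factorizations. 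Any candidate $I$ can be tested by computing its support characterization, as in Step 2.

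\textbf{Main obstacle.} The hardest part is finding these witnesses, especially for the principal ideals $\angles{x+y}$, $\angles{1+x}$ and $\angles{1+x+y}$. There is no subtraction, so ``divisibility by $x+y$'' does not behave as it does over a field. My first attempt will be to characterize each such ideal completely in terms of supports, via the Minkowski-sum structure of $\Supp((x+y)g)$. From that characterization I will then search low-degree products $fg$ whose supports satisfy the criterion while those of $f$ and $g$ do not.
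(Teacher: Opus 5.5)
Your outline has the same shape as the paper's proof. The paper does a finite enumeration, organized by whether both, one, or neither of $x,y$ are generators, and gives an explicit product witness for every non-prime case. The gap is that Step~3, which carries the content, is not there.

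You give no witnesses, and the one concrete candidate you name fails: $1+x+x^2=(1+x)^2\in\angles{1+x}$, so it is not a factor outside the ideal. Likewise $(x+y)(x^2+y^2)=(x+y)^3$ cannot help for $\angles{x+y}$. Any factorization of a homogeneous element of $\mathbb{B}[x,y]$ has homogeneous factors, because top and bottom components multiply without cancellation. The only linear polynomial dividing $(x+y)^3$ is $x+y$.

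What is needed are identities such as the following, each together with a check that no factor lies in the ideal:
\begin{itemize}
\item the Mincheva--Sakran relation $(1+y+y^3)(1+y^2+y^3)=(1+y)(1+y^2)(1+y^3)$, for $\angles{1+y}$, $\angles{x,1+y}$, $\angles{1+x,1+y}$, \dots;
\item for $\angles{x+y}$, either its homogenization $(x^3+xy^2+y^3)(x^3+x^2y+y^3)=(x+y)(x^2+y^2)(x^3+y^3)$, or the paper's $xy(x+y+xy)(1+x+y)=(x+y)(x+xy)(y+xy)$, which also handles $\angles{1+x+y}$;
\item $(1+x)(1+y)=xy+(1+x+y)$, for $\angles{x,1+x+y}$ and $\angles{x,x+y,1+x+y}$;
\item $(1+y+xy)(x+y)=x(1+xy)+(y+y^2)(1+x)$, for $\angles{x,1+x}$;
\item $(y^2+x)(y^3+xy^3+y)\in\angles{x,1+x,x+y}$;
\item $(1+y)(x^2+xy+y)=(x+y+xy)(x+y)+(y+xy)(1+x+y)$, for $\angles{x+y,1+x+y}$.
\end{itemize}
For ideals such as $\angles{x,1+x,x+y}$, the non-membership check itself requires a support-based membership criterion, which is the paper's lemma for that ideal. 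Without these witnesses, only the primality of the listed ideals is addressed, not the completeness of the list.

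There are also two errors in the steps you did carry out.

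In Step~1, you claim every $S\supseteq\{x,y\}$ reduces to $\{x,y\}$ plus whichever of $1+x,1+y$ lie in $S$. This fails for $S=\{x,y,1+x+y\}$. A constant term in $xg+yh+(1+x+y)k$ forces both $x$ and $y$ into the support, so $1+x,1+y\notin\angles{x,y,1+x+y}$. This is therefore a fifth ideal containing both $x$ and $y$. It must be killed separately, which the paper does with $(1+x)(1+y)=xy+(1+x+y)$.

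In Step~2, your membership criterion for $\angles{x,y,1+x}$ is wrong. A constant term in $xg+yh+(1+x)k$ forces $1\in\Supp(k)$, hence forces the monomial $x$ itself, not just some $x^i$. For instance, $1+x^2\notin\angles{x,y,1+x}$. The set you describe is the larger ideal $\angles{x,y,1+x,1+x^2,1+x^3,\ldots}$, so your complement argument proves primality of the wrong ideal. The same problem affects $\angles{x,y,1+x,1+y}$, where the correct condition is that $x$ or $y$ itself occurs.

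The repair is easy. The complement of $\angles{x,y,1+x}$ is $\{f : 1\in\Supp(f),\ x\notin\Supp(f)\}$. The monomial $x$ in $fg$ can only arise from the products $1\cdot x$ and $x\cdot 1$, so this set is multiplicatively closed. With that correction, Step~2 becomes a valid self-contained substitute for the paper's appeal to Mincheva--Sakran's classification of these primes.
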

We conjecture that for an ideal $I\subseteq \mathbb{B}[x_1,\dots,x_n]$ generated by linear polynomials, if $I$ does not contain all of the variables $x_1,\dots,x_n$, then $I$ is prime only when it is generated by a subset of the variables $\{x_1,\dots,x_n\}$.

We next use geometric intuition to study prime ideals of $\mathbb{B}[x,y]$. As in the classical case of polynomial rings over fields, one can set-theoretically decompose the spectrum as
\begin{equation}\label{eq: eq: decomp}
\operatorname{Spec}(\mathbb B[x,y])=D(xy)\cup V(x)\cup V(y),
\end{equation}
where $D(xy)$ denotes the set of prime ideals in $\mathbb{B}[x,y]$ that do not contain $xy$, and $V(x)$ and $V(y)$ denote the sets of prime ideals containing $x$ and $y$, respectively. Since $D(xy)=D(x)\cap D(y)$, this decomposition can be refined as
\begin{equation}\label{eq: eqdecom2}
\operatorname{Spec}(\mathbb B[x,y])
=
D(xy)\cup (V(x)\cap D(y))\cup (V(y)\cap D(x))\cup V(x,y).
\end{equation}

Over a ring, the pieces involving $V(x)$ and $V(y)$ are closely related to spectra of polynomial rings in fewer variables. Indeed, if $P\subseteq R[x,y]$ is a prime ideal with $x\in P$ and
\[
F=f(y)+xg(x,y),
\]
then $F\in P$ if and only if $f(y)\in P$, since one can subtract the term $xg(x,y)$. This allows one to recover information about primes containing $x$ from the specialization obtained by setting $x=0$.

This fails for semirings because additive inverses are not available. In particular, for a prime ideal $P\subseteq \mathbb{B}[x,y]$ containing $x$, the inclusion
\[
f(y)+xg(x,y)\in P
\]
does not necessarily imply that the $x$-free part $f(y)$ belongs to $P$. Thus, unlike in the ring case, prime ideals containing $x$ are not determined simply by deleting the terms divisible by $x$.

This observation suggests that the strata $V(x)$ and $V(y)$ themselves are too coarse to be analyzed solely through lower-dimensional polynomial semirings. Instead, the more natural pieces to study are the strata
\[
V(x)\cap D(y) \qquad \text{and} \qquad V(y)\cap D(x),
\]
where the remaining variable is nonzero. In other words, our approach amounts to studying specific fibers of a map
\[
\mathbb{B}[x,y]\to \mathbb{B}[t].
\]
This analysis of the resulting strata leads to the following result.

\begin{nothmb}[Theorem \ref{thm: boundary-fiber}]
Let $\mathfrak{q}$ be a prime ideal in $\mathbb{B}[y^\pm]$. Then the following subset of $\Spec (\mathbb{B}[x,y])$
\[
\mathcal F_{\mathfrak q}:=\{\mathfrak p\in V(x) \cap D(y) \mid \mathfrak p\cap \mathbb B[y^{\pm1}]=\mathfrak q\}.
\]  
has the maximal and the minimal elements with respect to the set-inclusion. 
\end{nothmb}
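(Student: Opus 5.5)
The plan is to exhibit both extremal elements explicitly. One is the preimage of $\mathfrak q$ under the specialization $x\mapsto 0$. The other is the largest ideal that contracts to $\mathfrak q$ and contains every polynomial involving $x$.

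\emph{Reading the statement.} For $\mathfrak p\in D(y)$, I read the condition $\mathfrak p\cap\mathbb B[y^{\pm1}]=\mathfrak q$ through the localization $\mathbb B[x,y]\to\mathbb B[x,y^{\pm1}]$. Since $y\notin\mathfrak p$, this condition should be equivalent to
\[
\mathfrak p\cap\mathbb B[y]=\mathfrak q':=\mathfrak q\cap\mathbb B[y].
\]
So the first step is to check this equivalence using only the facts that $y$ is not in $\mathfrak p$ and that $\mathfrak q$ is saturated with respect to $y$. In particular $\mathfrak q'$ is a prime ideal of $\mathbb B[y]$, every power $y^k$ lies outside $\mathfrak q'$, and $\mathfrak q'$ is recovered from $\mathfrak q$ and conversely.

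\emph{Minimal element.} Let $\pi:\mathbb B[x,y]\to\mathbb B[y]$ be the homomorphism $x\mapsto 0$, and set $\mathfrak p_{\min}:=\pi^{-1}(\mathfrak q')$. This is the set of $F=f(y)+xg(x,y)$ with $f\in\mathfrak q'$.
\begin{enumerate}[(i)]
\item It is prime because it is the preimage of a prime ideal under a homomorphism.
\item It contains $x$.
\item It avoids $y$, since $y^k\notin\mathfrak q'$.
\item It satisfies $\mathfrak p_{\min}\cap\mathbb B[y]=\mathfrak q'$, since $\pi$ is the identity on $\mathbb B[y]$.
\end{enumerate}
Hence $\mathfrak p_{\min}\in\mathcal F_{\mathfrak q}$. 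For minimality, note that $\mathfrak p_{\min}$ is exactly the ideal generated by $x$ together with $\mathfrak q'$: every $f+xg$ with $f\in\mathfrak q'$ is such a sum, and conversely the $x$-free part of any element of that ideal lies in $\mathfrak q'$. Any $\mathfrak p\in\mathcal F_{\mathfrak q}$ contains both $x$ and $\mathfrak q'$, so it contains $\mathfrak p_{\min}$.

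\emph{Maximal element.} Set
\[
\mathfrak p_{\max}:=\mathfrak q'\cup\{F\in\mathbb B[x,y]: F\notin\mathbb B[y]\},
\]
that is, $\mathfrak q'$ together with all polynomials having at least one monomial divisible by $x$. Its complement is $T:=\mathbb B[y]\setminus\mathfrak q'$.

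First I check that $\mathfrak p_{\max}$ is an ideal.
\begin{enumerate}[(i)]
\item In $\mathbb B$ there is no cancellation, so a sum in which one summand has an $x$-monomial still has that monomial.
\item A product of a nonzero polynomial with one having an $x$-monomial again has an $x$-monomial, because $\mathbb B[x,y]$ has no zero divisors and leading monomials multiply.
\item Sums of elements of $\mathfrak q'$ stay in $\mathfrak q'$, and products of elements of $\mathfrak q'$ with elements of $\mathbb B[y]$ stay in $\mathfrak q'$.
\end{enumerate}

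Next I check that it is prime. If $F,G\notin\mathfrak p_{\max}$, then $F,G\in T$, and $FG\in T$ because $\mathfrak q'$ is prime.

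It lies in the fiber: it contains $x$, it avoids $y$, and $\mathfrak p_{\max}\cap\mathbb B[y]=\mathfrak q'$ by construction.

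For maximality, let $\mathfrak p\in\mathcal F_{\mathfrak q}$. Its complement contains $\mathbb B[y]\setminus\mathfrak q'=T$, so $\mathfrak p\subseteq\mathfrak p_{\max}$.

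\emph{Main obstacle.} I expect the only delicate point to be the first step: making precise how $\mathfrak p\cap\mathbb B[y^{\pm1}]$ is defined in the paper and checking that it matches $\mathfrak p\cap\mathbb B[y]=\mathfrak q'$. This requires the correspondence between primes of $\mathbb B[y^{\pm1}]$ and primes of $\mathbb B[y]$ avoiding $y$. After that, both extremal elements are explicit and the verifications are routine. They use only idempotence, the absence of cancellation, and the fact that $\mathbb B[x,y]$ has no zero divisors.
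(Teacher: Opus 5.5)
Your proposal is correct and is essentially the paper's proof. The paper localizes at $y$ and shows that $\rho^{-1}(\mathfrak q)$ and $\{a_F+h_F \mid a_F\in\mathfrak q \text{ or } h_F\neq 0\}$ are the minimal and maximal elements of the localized fiber in $\mathbb B[x,y^{\pm1}]$, then contracts back via the extension--contraction bijection; you instead translate the fiber condition into $\mathfrak p\cap\mathbb B[y]=\mathfrak q\cap\mathbb B[y]$ first and run the same two arguments (preimage of a prime under $x\mapsto 0$, and complement equal to $\mathbb B[y]\setminus(\mathfrak q\cap\mathbb B[y])$) directly in $\mathbb B[x,y]$, reaching exactly the explicit ideals recorded in Theorem~\ref{thm: boundary-fiber}. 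The translation step you flag does hold, but it needs primeness of $\mathfrak p$ in addition to $y\notin\mathfrak p$ and injectivity of multiplication by powers of $y$; this is the content of the paper's extension--contraction lemma.
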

In Remark \ref{rmk: subtractive boundary fiber}, we explain how $\mathcal{F}_{\mathfrak q}$ and its minimal element measure the failure of prime ideals being subtractive (Definition \ref{definition: ideals and primes}).

Our second goal is to study natural geometric spaces associated with $\mathbb{B}[x_1,\dots,x_n]$. To be precise, we aim to develop a notion of projective closure for ideals in the Boolean setting. 

Many important geometric properties become visible only in the projective setting and are not captured by affine varieties alone. It is therefore natural to seek a theory of projective closure in the setting of semirings. We may define homogenization and dehomogenization of primes ideals in $\mathbb{B}[x_1,\dots,x_n]$ as in the classical case (Section \ref{section: homogenization}). However, Example \ref{example: homogenization} shows that the homogenization of a prime ideal need not remain prime over a semiring. Thus, the naive extension of the classical construction of projective closure from rings to semirings does not work.

On the other hand, we show that dehomogenization preserves primality: the dehomogenization of a prime ideal is again a prime ideal. This asymmetry between homogenization and dehomogenization highlights a fundamental difference between ring and semiring settings and motivates our approach using congruences to define a suitable notion of projective closure.

\begin{nothmc}[Example \ref{example: homogenization}, Example \ref{example: homogen2}, and Proposition \ref{pro: dehomogenization prime}]
Let $P\subseteq \mathbb{B}[x_0,\dots,x_n]$ be a prime ideal. 
\begin{enumerate}
    \item 
The associated homogeneous ideal $P^h$ does not have to be a prime ideal. Additionally if $A$ is a prime ideal in $\mathbb{B}[x_1,...,x_n]$ then its homogenization $A^{hom} := \langle F^{hom} | F\in A\rangle$ need not be prime.
\item 
If $P$ is a homogeneous prime ideal, then its dehomogenization (by setting $x_i=1$) is a prime ideal.
\end{enumerate}
\end{nothmc}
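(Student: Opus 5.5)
Part (1) is settled by counterexamples, so the plan is to exhibit them explicitly. Part (2) needs a genuine argument.

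\emph{Part (1).} Take an explicit prime $P$ generated by linear polynomials from Theorem A, or the kernel of an evaluation map, and check directly that $P^h$ fails primality. The witness should be two homogeneous elements $F,G\notin P^h$ with $FG\in P^h$. In idempotent semirings a product like $(x+y)^2 = x^2+y^2 + xy$ tends to land in an ideal even when neither factor does. The natural place to look is therefore an ideal containing sums of squares but not the corresponding linear forms. For example, a prime kernel of $\mathbb{B}[x,y]\to \mathbb{B}$ or $\mathbb{B}[x,y]\to\mathbb{B}[t]$ is a good candidate, because such kernels are automatically prime; the task is then to find homogenized generators with a product factoring in the graded ring. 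I would do the same for $A^{hom}$, starting from a prime $A\subseteq \mathbb{B}[x_1,\dots,x_n]$. Each verification is a finite check: compare supports of monomials, using that in $\mathbb{B}[x]$ a polynomial is just a finite set of monomials.

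\emph{Part (2).} Let $P$ be a homogeneous prime and $\phi:\mathbb{B}[x_0,\dots,x_n]\to\mathbb{B}[x_0,\dots,\widehat{x_i},\dots,x_n]$ be $x_i\mapsto 1$, with $P_{(i)}=\phi(P)$. I would proceed in three steps.

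\begin{enumerate}
\item Show $P_{(i)}$ is an ideal. Since $\phi$ is surjective, it suffices that $\phi(P)$ is closed under addition. Given $f=\phi(F)$ and $g=\phi(G)$ with $F,G\in P$, we may replace $F,G$ by homogeneous components, since $P$ is homogeneous and $\phi$ of a sum of components is the sum of their images. Multiplying by powers of $x_i$ makes them of equal degree, and then $F x_i^a+Gx_i^b\in P$ maps to $f+g$.
\item Establish the key lemma: for $g$ in the target with homogenization $g^{hom}$, we have $g\in P_{(i)}$ if and only if $x_i^k g^{hom}\in P$ for some $k$. If $x_i\notin P$, this is equivalent to $g^{hom}\in P$. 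The lemma holds because any homogeneous $F$ with $\phi(F)=g$ has the form $x_i^k g^{hom}$. This is a monomial-by-monomial check, using that $\mathbb{B}$-coefficients are idempotent, so no cancellation occurs. The non-homogeneous case reduces to this by step 1.
\item Conclude primality. If $x_i\in P$, then $1\in P_{(i)}$, which must be excluded or handled separately; presumably one assumes $x_i\notin P$ (otherwise the result is the unit ideal, and we should check the paper's convention). Now suppose $fg\in P_{(i)}$. Since $(fg)^{hom}=f^{hom}g^{hom}$ (degrees add, and there is no cancellation in $\mathbb{B}$, so the top degree is attained), step 2 gives $f^{hom}g^{hom}\in P$. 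Primality of $P$ gives $f^{hom}\in P$ or $g^{hom}\in P$, hence $f\in P_{(i)}$ or $g\in P_{(i)}$. Properness follows because $1\in P_{(i)}$ forces $x_i^k\in P$, hence $x_i\in P$.
\end{enumerate}

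The main obstacle is step 2, in particular the claim that every homogeneous preimage of $g$ is $x_i^k g^{hom}$. Collapsing monomials under $\phi$ could merge several monomials into one, but idempotency means the merged image is still the same support set. So a homogeneous $F$ of degree $d$ maps to $g$ exactly when $F=x_i^{d-\deg g}g^{hom}$. I would verify this carefully, since it is the place where the semiring setting could behave differently.
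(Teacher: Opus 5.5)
Your Part (2) is correct and is essentially the paper's proof of Proposition~\ref{pro: dehomogenization prime}. The steps are: form the homogeneous preimage $H=\sum_d x_i^{N-d}F_d\in P$; note $H=x_i^{k}f^{h_i}g^{h_i}$ because dehomogenization is injective on monomials of fixed degree; cancel $x_i^k$ using $x_i\notin P$, which is exactly the hypothesis the paper imposes; and then apply primality. Your Step 1 does more work than needed, since $\phi(F)+\phi(G)=\phi(F+G)$ already gives closure under addition.

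The gap is in Part (1). You produce no counterexample, and the search you describe provably cannot find one. You look for \emph{homogeneous} $F,G\notin P^h$ with $FG\in P^h$, but for prime $P$ no such pair exists. Indeed, $FG$ would be a homogeneous element of $P$, so $F\in P$ or $G\in P$, and a homogeneous element of $P$ already lies in $P^h$. The same holds for $A^{\operatorname{hom}}$. Dehomogenization $\delta$ ($x_0\mapsto 1$) maps $A^{\operatorname{hom}}$ into $A$, and every nonzero homogeneous $F$ equals $x_0^{k}\,\delta(F)^{\operatorname{hom}}$. Hence $\delta(F)\delta(G)\in A$ forces $F$ or $G$ into $A^{\operatorname{hom}}$. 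So both ideals are prime on homogeneous elements, and primality can only fail for non-homogeneous factors, which is precisely where the ring argument uses subtraction.

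Your proposed sources also fail. Zero-preimages of maps $\mathbb{B}[x,y]\to\mathbb{B}$ or $\mathbb{B}[t]$ are monomial ideals, hence already homogeneous. The linear primes of Theorem A satisfy $P^h=P$ or $P^h=\langle x,y\rangle$. By the paper's proposition that $I^h$ is prime whenever $I$ is a subtractive prime, $P$ must be non-subtractive. The $(x+y)^2$ heuristic is likewise a dead end, since a prime containing $(x+y)^2$ contains $x+y$.

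The missing idea is to pull back a non-subtractive prime along a non-homogeneous weighting. The paper takes $\mathfrak P=\pi_{1,2}^{-1}(J_{\{1\}})$, where a nonzero $F$ lies in $\mathfrak P$ iff $\mu_{1,2}(F)+1\in T_{1,2}(F)$. Then $f=1+x+y$ and $g=x+y+y^2$ lie in $\mathfrak P$ but not in $\mathfrak P^h$, because their components $1$ and $y^2$ fail the test. All components $x+y$, $x^2+xy+y^2$, $xy^2+y^3$ of $fg$ pass it, so $fg\in\mathfrak P^h$. The same pair disproves primality of $\mathfrak P^{\operatorname{hom}}$, since $\mathfrak P^{\operatorname{hom}}$ is homogeneous and dehomogenizes into $\mathfrak P$.
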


Theorem C provides further evidence that congruences, rather than ideals, may be better suited for capturing the geometric structure of semirings. Motivated by this observation, we turn our attention from prime ideals to prime congruences and investigate their geometric and topological properties via projective closure. Accordingly, in the remainder of the paper, congruences will serve as the fundamental objects through which we study the geometry of semirings.

To define an object corresponding to prime ideals, we use the notion of prime congruences, first defined by Bertram and Easton \cite{bertram2017tropical}. Let $R$ be an idempotent semiring, for a congruence $C \subseteq R \times R$, one first defines the \emph{twisted product}: for $x=(x_1,x_2),~~ y=(y_1,y_2) \in C$
\[
(x_1,x_2)\cdot_{\text{tw}} (y_1,y_2) = (x_1y_1+x_2y_2,x_1y_2+x_2y_1). 
\]
$C$ is said to be \emph{prime} if $x\cdot_{\text{tw}}y \in C$ implies either $x \in C$ or $y \in C$. 

The literature contains several notions of prime congruences. Our definition is different from some of these. For instance, Lescot \cite{les3} defines a congruence $C$ to be prime if
\[
(x_1,x_2)\cdot(y_1,y_2):=(x_1y_1,x_2y_2)\in C
\]
implies that either $(x_1,x_2)\in C$ or $(y_1,y_2)\in C$.

In the second part of the paper, we study the geometry of prime congruences, with a particular focus on projective closures of an affine tropical variety over $\mathbb{B}$. Let
\[
\mathbb{A}_{\mathbb{B}}^n:=\operatorname{SpecCong}(A)
\]
be the set of prime congruences on $A$, where $A=\mathbb{B}[t_1,\dots,t_n]$. For a congruence $C$ on $A$, let $V(C)$ denote the set of prime congruences containing $C$. This imposes the Zariski topology on $\operatorname{SpecCong}(A)$ (Lemma \ref{lemma: zariski}).

Our construction combines two fundamental ingredients. The first is the homogenization of prime congruences introduced by Maclagan and Rinc\'on \cite{maclagan2016tropical}. The second is Tanaka's localization theory for congruences \cite{Tanaka26}, which we review in Section \ref{section: preliminaries}. Let $S=\mathbb{B}[x_0,\dots,x_n]$. By passing to the degree-zero part $S_{(x_i)}$ of the localization $S_{x_i}$, we obtain natural identifications
\[
\operatorname{SpecCong}(A)\xrightarrow{\sim} U_i:=\operatorname{SpecCong}(S_{(x_i)}),
\]
where the $U_i$ are the standard affine open subsets of the projective congruence spectrum.

These affine charts allow us to construct the projective space $\Proj$ over $\mathbb{B}$ and to view $V(C)$ as a subspace of $\Proj$. We then define the projective closure $V_+\!\left(C_P^{\mathrm{proj}}\right)$ of an ideal $P\subseteq A$ in $\Proj$ in a chartwise way and prove the following result.

\begin{nothmd}
[Theorem \ref{thm:projective-congruence-closure}]
Let $A=\mathbb{B}[t_1,\dots,t_n]$, $S=\mathbb{B}[x_0,\dots,x_n]$, and $C$ be a congruence on $A$. Let
$C^{\mathrm{proj}}:=(C^h:x_0^\infty)$, where $C^h$ is the homogenization of $C$ and $(C^h:x_0^\infty)$ is the $x_0$-saturation of $C^h$. Then, one has
\[
V_+(C^{\mathrm{proj}})
=
\overline{V(C)}^{\,\Proj},
\]
where $\overline{V(C)}^{\,\Proj}$ denotes the topological
closure of $V(C)$ in $\Proj$. In particular, for every ideal $P\subseteq A$, one has
\[
\operatorname{Proj}(P) := V_+\!\left(C_P^{\mathrm{proj}}\right)
=\overline{V(\operatorname{Bend}(P))}^{\,\Proj}.
\]
\end{nothmd}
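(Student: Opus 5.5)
We prove the two inclusions separately, working chartwise on the standard affine cover $\Proj=\bigcup_{i=0}^n U_i$, $U_i=\operatorname{SpecCong}(S_{(x_i)})$. Throughout, $V(C)$ is regarded as a subset of $U_0$ via the identification $\operatorname{SpecCong}(A)\xrightarrow{\sim}U_0$ coming from $S_{(x_0)}\cong A$, $x_j/x_0\mapsto t_j$. The first step is a dictionary lemma: a homogeneous prime congruence $\mathfrak P$ on $S$ lies in $U_0$ exactly when it does not identify $x_0$ with $0$, i.e. $(x_0,0)\notin\mathfrak P$ (equivalently $x_0$ is not a zero divisor for the twisted product modulo $\mathfrak P$). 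On $U_0$ the correspondence is $\mathfrak P\mapsto \mathfrak P_{(x_0)}$, the degree-zero part of Tanaka's localization. The inverse sends a prime $\mathfrak p$ of $A$ to the congruence generated by homogenizations of pairs of $\mathfrak p$, saturated in $x_0$. I would then check that $V_+(C^{\mathrm{proj}})\cap U_0=V(C)$. This reduces to the identity $(C^{\mathrm{proj}})_{(x_0)}=C$. It holds because dehomogenization ($x_0=1$) of $C^h$ returns $C$, saturation by $x_0$ does not change the localization at $x_0$, and a homogeneous prime not containing $(x_0,0)$ contains $C^h$ iff it contains $(C^h:x_0^\infty)$. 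The last point uses primeness: if $(x_0^k f,x_0^k g)\in\mathfrak P$, then $(x_0^k,0)\cdot_{\mathrm{tw}}(f,g)=(x_0^kf,x_0^kg)$, and $(x_0^k,0)\notin\mathfrak P$.

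For $\overline{V(C)}\subseteq V_+(C^{\mathrm{proj}})$: by the previous step $V(C)\subseteq V_+(C^{\mathrm{proj}})$, and $V_+(C^{\mathrm{proj}})$ is closed by Lemma \ref{lemma: zariski} (projective version). So this inclusion is immediate.

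For the reverse inclusion, I would show that every closed set $V_+(D)$ containing $V(C)$, with $D$ a homogeneous congruence, contains $V_+(C^{\mathrm{proj}})$. It suffices to show $D\subseteq \bigcap_{\mathfrak P\in V(C)}\mathfrak P$ implies $D\subseteq$ every $\mathfrak Q\in V_+(C^{\mathrm{proj}})$. Take a homogeneous pair $(F,G)\in D$ of degree $d$. For each prime $\mathfrak p\supseteq C$ of $A$, the dehomogenized pair $(F(1,t),G(1,t))$ lies in $\mathfrak p$. Hence it lies in the intersection of all primes containing $C$, which should be the radical $\sqrt C$ in the congruence sense of Joó–Mincheva, characterized via twisted powers. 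Then some twisted power of $(F(1,t),G(1,t))$ lies in $C$. Homogenizing and multiplying by a power of $x_0$ puts the corresponding twisted power of $(F,G)$ into $C^h$ up to an $x_0^k$ factor, hence into $C^{\mathrm{proj}}$. Since $\mathfrak Q\supseteq C^{\mathrm{proj}}$ is prime, it is closed under twisted roots, so $(F,G)\in\mathfrak Q$. For the final statement about ideals, I would apply this with $C=\operatorname{Bend}(P)$, using $V(\operatorname{Bend}(P))$ as the set of primes defined by $P$.

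The main obstacle is the radical step. It requires a Nullstellensatz-type statement: the intersection of prime congruences containing $C$ equals the set of pairs some twisted power of which lies in $C$. It also requires that homogenization commutes with twisted products up to powers of $x_0$. If the radical description is not available in the form needed, I would instead argue topologically. One would show that each $\mathfrak Q\in V_+(C^{\mathrm{proj}})$ outside $U_0$ lies in the closure of $V(C)$, either by exhibiting it as a specialization of a prime in $V(C)$ or by using the known description of prime congruences on $\mathbb{B}$-polynomial semirings via orderings/weight vectors (Joó–Mincheva) and degenerating the weight.
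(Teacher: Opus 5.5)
\textbf{Framing.} Your ``dictionary lemma'' is vacuous: no proper homogeneous prime congruence $\mathfrak P$ on $S$ satisfies $(x_0,0)\notin\mathfrak P$. Indeed, $S/\mathfrak P$ is totally ordered, so for homogeneous $F$ of positive degree either $1+F\sim 1$ or $1+F\sim F$. By Theorem~\ref{theorem: congruence homogeneous elts} these give $F\sim 0$ or $1\sim 0$, so every such $\mathfrak P$ kills $S_+$. This is the degeneracy the paper points out, and the reason $\Proj$ and $V_+$ are defined chartwise. So the points of $V_+(C^{\mathrm{proj}})$ are primes of $R_i=(S_{x_i})_0$ containing $E_i=(C^{\mathrm{proj}})_{(x_i)}$, not homogeneous primes of $S$ containing $C^{\mathrm{proj}}$.

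Two things survive: the identification $V_+(C^{\mathrm{proj}})\cap U_0=V(C)$, which only needs $\theta((C^{\mathrm{proj}})_{(x_0)})=C$, and the easy inclusion. Your reduction to closed sets of the form $V_+(D)$ with $D$ global is unjustified, but it is also unnecessary. Closure can be tested chartwise, and what must be shown is that $V(E_i)\cap D(u_i)$ is dense in $V(E_i)$ for $u_i=x_0/x_i$. Since the closure of a set $Y$ of primes is $V(\bigcap_{Q\in Y}Q)$, this is equivalent to: the prime radical of $E_i$ is the pullback of the prime radical of $E_i[u_i^{-1}]$.

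\textbf{The central gap.} The Nullstellensatz you invoke is false: the prime radical of $C$ is not the set of pairs having a twisted power in $C$. Take $n=1$ and $C=\langle t^2\sim 1\rangle$, so $A/C\cong\mathbb B[\mathbb Z/2]$. In a totally ordered quotient, $t\ge 1$ gives $1=t^2\ge t$, so $t^2=1$ forces $t=1$. Hence $(t,1)$ lies in every prime containing $C$. But $(t,1)^k=(\sum_{j\ \text{even}}t^{k-j},\sum_{j\ \text{odd}}t^{k-j})$ is congruent modulo $C$ to $(1,t)$ or $(t,1)$, never to a diagonal pair. With $D=\langle x_1\sim x_0\rangle$ one has $V_+(D)\supseteq V(C)$, and your argument would need exactly such a twisted power. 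So it breaks even in a case where the theorem holds.

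Had this Nullstellensatz been true, your argument would work chartwise: $\alpha^k\in E_i[u_i^{-1}]$ gives $(u_i^N,0)\alpha^k\in E_i$, and $u_i$-saturation gives $\alpha^k\in E_i$. The missing ingredient is your fallback, made precise, and the paper proves it as follows. First, $x_0$-saturation of $C^{\mathrm{proj}}$ transfers to $u_i$-saturation of $E_i$ (Lemma~\ref{lem:chartwise-saturation}). Then, for each prime $P\supseteq E_i$ with $(u_i,0)\in P$, one produces a prime $Q$ with $E_i\subseteq Q\subsetneq P$ and $(u_i,0)\notin Q$. This gives $P\in\overline{\{Q\}}$ with $Q\in V(C)$ (Lemma~\ref{lem:prime-separation-saturated}, Proposition~\ref{prop:saturated-principal-open-dense}).

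This generization step is delicate for the same reason your radical step fails: a congruence maximal among those avoiding a twisted-multiplicative set need not be prime. The diagonal of $\mathbb B[\mathbb Z/2]$ is maximal among congruences avoiding $\{(t,1),(1,t)\}$. It also satisfies the congruence-product criterion of Remark~\ref{rem:Rowen-primality-comparison}, since every larger congruence contains $(1,t)$ and $(1,t)(1,t)=(1,t)$. Yet $(1+t,1)(1+t,t)$ is diagonal, so it is not prime. Any proof of the generization must therefore use the specific avoided set and the saturation, not just maximality. Note that the paper's proof of Lemma~\ref{lem:prime-separation-saturated} passes from that criterion to pairwise primality via this remark, which the example contradicts.
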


As a consequence, we discuss the irreducibility of $\text{Proj}(P)$ in Corollary \ref{cor:projective-irreducibility}.

\bigskip

\textbf{Acknowledgment} J.~Jun was partially supported by the NSF LEAPS-MPS grant (DMS-2532394) and the AMS-Simons Research Enhancement Grant for Primarily Undergraduate Institution (PUI) Faculty during the preparation of this paper. T.~Bishop, R.~Green, A.~Norwood, and D.~Thackrah were also partially supported by NSF LEAPS-MPS grant (DMS-2532394). The authors thank Kalina Mincheva and Jeffrey Tolliver for many helpful conversations and valuable suggestions. The authors also thank Naufil Sakraan and Jeffrey Tolliver for their helpful suggestions and for pointing out several mistakes in an earlier draft.

\bigskip

\section{Preliminaries}\label{section: preliminaries}

In this section, we will review several definitions and propositions which will be used in later sections. All semirings and monoids are assumed to be commutative. 

\subsection{Prime ideals of semirings} 

\begin{mydef}
A \emph{semiring} is an nonempty set $R$ with two binary operations (addition and multiplication) such that $(R,+,0)$ and $(R,\cdot,1)$ are monoids and $0$ is an absorbing element, i.e., $a\cdot 0=0$ for all $a \in R$. Two binary operations are required to be compatible:
\[
a\cdot (b+c) = a\cdot b + a\cdot c. 
\]
A semiring is said to be a \emph{semifield} if $(R\backslash\{0\},\cdot,1)$ is a group. 
\end{mydef}

In this paper, we will only consider \emph{additively idempotent} semirings, i.e., semirings $R$ such that $a+a=a$ for all $a \in R$. The following are two main examples. 

\begin{myeg}[Tropical semifield]
Let $\mathbb{T}=\mathbb{R}\cup \{-\infty\}$. Addition $\oplus$ of $\mathbb{T}$ is to take the maximum of two numbers, i.e.
\[
a\oplus b := \max\{a,b\},
\]
where $-\infty$ is the smallest element. Multiplication $\odot$ is usual addition of real numbers, i.e., 
\[
a\odot b := a+b,\quad a\odot (-\infty) = -\infty.
\]
In the following, we will simply use $+$ and $\cdot$ instead of $\oplus$ and $\odot$.
\end{myeg}

\begin{myeg}[Boolean semifield]
Let $\mathbb{B}=\{0,1\}$. Multiplication of $\mathbb{B}$ is same as $\mathbb{Z}_2$. Addition is given by
\[
1+0 =1, \quad 0+0=0, \quad 1+1=1. 
\]
One may also view $\mathbb{B}$ as a subsemifield of $\mathbb{T}$ by writing $\mathbb{B}=\{-\infty, 0\}$. 
\end{myeg}

\begin{myeg}
Let $\mathbb{B}[x_1,\dots,x_n]$ be the set of formal polynomials with coefficients in $\mathbb{B}$. With formal polynomial addition and multiplication, $\mathbb{B}[x_1,\dots,x_n]$ becomes an idempotent semiring. For instance, we have
\[
(x^2+x+1) + (x^3+x^2+1)=x^3+x^2+x+1
\]
and
\[
(x+1)^3= x^3+x^2+x+1. 
\]
\end{myeg}

\begin{rmk}
We emphasize that we view $\mathbb{B}[x_1,\dots,x_n]$ as formal polynomials rather than polynomial functions. For instance, although $x+1$ and $x^2+1$ are the same functions on $\mathbb{B}$, we view them as different polynomials.  
\end{rmk}

\begin{mydef}\label{definition: ideals and primes}
Let $R$ be a semiring. 
\begin{enumerate}
    \item 
An \emph{ideal} of $R$ is a subset $I\subseteq R$ such that $ra+b \in I$ for all $a,b \in I$ and $r \in R$.
\item 
An ideal $I$ of $R$ is \emph{prime} if $ab \in I$ implies either $a \in I$ or $b \in I$. 
\item 
An ideal $I$ of $R$ is said to be \emph{subtractive} if $a+b \in I$ and $a \in I$ implies that $b \in I$.
\end{enumerate} 
\end{mydef}

As in the ring case, for a subset $A \subseteq R$ of a semiring $R$, we let $\angles{A}$ be the smallest ideal of $R$ which includes $A$. We say $\angles{A}$ is the ideal of $R$ generated by $A$. One can easily check that
\[
\angles{A}=\left\{\sum_{i=1}^n r_ia_i \mid n \in \mathbb{N},~~r_i \in R,~~a_i\in A\right\}.
\]

 \begin{mydef}
For $f \in \mathbb{B}[x_1,\ldots,x_n]$, we let $\text{Supp}(f)$, called the \emph{support} of $f$, be the set of exponent vectors of monomials of $f$ with nonzero coefficients. For example, if $f=x_1^2x_2 + x_1^3x_2^4+1$ then $\text{Supp}(f) = \{(2,1), (3,4),(0,0)\}.$
\end{mydef}

\begin{mydef}\label{definition: prime subset}
A subset $A \subseteq \mathbb{N}$ is \emph{prime} if for all $a + b \in A$, one has $a \in A$ or $b \in A$. In general, for an additive monoid $M$, we call a subset $P\subseteq M$ prime if $a+b \in P$ implies either $a \in P$ or $b \in P$.
\end{mydef}

In \cite{alarcon1994commutative}, Alarc\'on and Anderson introduced the notion of prime subsets of $\mathbb{N}$ and used it to partially classify the prime ideals of $\mathbb{B}[t]$. The underlying idea is that multiplication in $\mathbb{B}[t]$ depends heavily on the supports of polynomials, making the study of prime ideals closely related to the combinatorics of prime subsets of $\mathbb{N}$.
 
In \cite{alarcon1994commutative}, to a subset $A\subseteq \mathbb{N}$, Alarc\'on and Anderson first associated two ideals:
\begin{equation}\label{eq: I and J}
I_A:=\angles{t, \{1+t^a \mid a \in A\}}, \quad J_A:=I_A - \{ t^{m}f \; | \; m \ge 0, f \notin I_A \}.
\end{equation}
By definition $I_A$ is an ideal. Alarc\'on and Anderson showed that $J_A$ was an ideal by proving the following equality:
\begin{equation}\label{eq: J_A}
J_A=\angles{\{1+t^a+t^m \mid a \in A, m\geq 0\}}.  
\end{equation}

With this terminology, Alarc\'on and Anderson proved the following. 

\begin{theorem}[{\cite[Theorem 11]{alarcon1994commutative}}]\label{theorem: alarcon and anderson}
Let $A\subseteq \mathbb{N}$. Then the following hold. 
\begin{enumerate}
  \item 
  $I_A$ a prime ideal if and only if $A$ is a prime subset.
  \item 
  $J_A$ is a prime ideal if and only if $A$ is a prime subset.  
\end{enumerate}
\end{theorem}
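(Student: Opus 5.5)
We prove the two equivalences separately, starting with the forward directions, which are elementary. Suppose $I_A$ (resp.\ $J_A$) is prime and $a+b\in A$. Then $(1+t^a)(1+t^b)=1+t^a+t^b+t^{a+b}$. We first check that this lies in the ideal. For $I_A$, we have $1+t^{a+b}\in I_A$, and $t^a,t^b$ are multiples of $t$ (or equal to $1$, if $a=0$ or $b=0$). For $J_A$, we use the generators $1+t^{a+b}+t^m$ from \eqref{eq: J_A}, taking $m=a$ and $m=b$, and add them. Primality then gives $1+t^a\in I_A$ or $1+t^b\in I_A$ (resp.\ in $J_A$).

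It remains to show that $1+t^c\in I_A$ forces $c\in A$. Every element of $I_A$ has the form $tg+\sum_i h_i(1+t^{a_i})$ with $a_i\in A$. Because the constant term $1$ must come from some $h_i$ having constant term $1$, the monomial $t^{a_i}$ appears in the sum. In $1+t^c$ the only nonconstant monomial is $t^c$, so $c=a_i\in A$. Here we use that addition in $\mathbb{B}[t]$ is union of supports, so there is no cancellation. The same support argument handles $J_A$ via \eqref{eq: J_A}.

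For the converse, assume $A$ is prime. For $I_A$, we characterize membership by support. A polynomial $f$ lies in $I_A$ if and only if $f$ has no constant term, or $f$ has constant term $1$ and $\Supp(f)\setminus\{0\}$ meets $A$. The "if" direction holds because $f=(1+t^a)+t\cdot(\text{rest})$, and idempotency lets us absorb the monomial $t^a$ again. The "only if" direction is the support argument above.

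Now let $fg\in I_A$ with $f,g\notin I_A$. Then both $f$ and $g$ have constant term $1$, and their nonzero supports $S_f$ and $S_g$ avoid $A$. Since $fg$ has constant term $1$, its support must meet $A$. The support of $fg$ is $\{0\}\cup S_f\cup S_g\cup (S_f+S_g)$, so some $s+s'\in A$ with $s\in S_f$ and $s'\in S_g$. Primality of $A$ then puts $s$ or $s'$ in $A$, a contradiction. Thus $I_A$ is prime.

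The main obstacle is proving that $J_A$ is prime, since membership in $J_A$ is defined by exclusion. Using \eqref{eq: J_A}, we will show that $f\in J_A$ if and only if $f\in I_A$ and $f$ is not of the form $t^m h$ with $h\notin I_A$. Concretely, this means either the constant term is $0$ and the lowest-degree part does not obstruct, or the description is symmetric under shifting by $t^m$.

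Let $fg\in J_A$ and suppose $f,g\notin J_A$. Since $J_A\subseteq I_A$ and $I_A$ is prime, we may assume $f\in I_A$. Then $f\notin J_A$ forces $f=t^m h$ with $h\notin I_A$. We have $g=t^{k}g'$ with $t\nmid g'$, and we split into cases according to whether $g'\in I_A$:
\begin{itemize}
\item If $g'\notin I_A$, then $h g'\notin I_A$ by the primality of $I_A$. Hence $fg=t^{m+k}(hg')$ is excluded from $J_A$, a contradiction.
\item If $g'\in I_A$, then $g'$ has constant term $1$ (since $t\nmid g'$) and so $g'\in J_A$ by the characterization. Hence $g=t^k g'\in J_A$, a contradiction.
\end{itemize}

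The delicate point in this case analysis is making the characterization of $J_A$ from \eqref{eq: J_A} precise enough to justify these steps.
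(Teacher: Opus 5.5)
The paper does not prove this theorem; it quotes it from Alarc\'on--Anderson, so there is no internal proof to compare against. Judged on its own, your argument is correct.

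The forward directions are sound. They rest on $(1+t^a)(1+t^b)=(1+t^{a+b})+t^a+t^b$, together with the no-cancellation argument that $1+t^c\in I_A$ forces $c\in A$. Primality of $I_A$, via $\Supp(fg)=\{0\}\cup S_f\cup S_g\cup(S_f+S_g)$, is also sound; these are the standard support arguments. Your membership criterion for $I_A$ is exactly Lemma~\ref{lemma: one-variable lemma}. As your proof shows, it does not actually need $A$ to be prime, and that matters, because you use it in the forward direction before primality of $A$ is known.

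For $J_A$, the step you flag as delicate is already settled by the definition. The vague sentence about the lowest-degree part ``not obstructing'' can be replaced by a precise criterion. For $F\neq 0$ with $\mu(F)=\min\Supp(F)$, one has $F\in J_A$ if and only if $F/t^{\mu(F)}\in I_A$. Indeed, a representation $F=t^mh$ with $m<\mu(F)$ has $h\in\angles{t}\subseteq I_A$, while $m=\mu(F)$ forces $h=F/t^{\mu(F)}$. This is the same reduction the paper uses in Lemma~\ref{lemma: membership lemma}(2).

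Since $\mu(fg)=\mu(f)+\mu(g)$ in $\mathbb{B}[t]$, the assignment $F\mapsto F/t^{\mu(F)}$ is multiplicative. So primality of $J_A$ follows from primality of $I_A$ in one line. Your two-case analysis is a correct unpacking of exactly this, and it uses only that $J_A$ is an ideal, which \eqref{eq: J_A} supplies.

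One convention should be made explicit: you tacitly use $0\notin A$. You need it when you conclude $c=a_i$ from the presence of $t^{a_i}$, and for properness via $1\notin I_A$. The statement itself forces this convention. For $0\in A$ one gets $I_A=J_A=\mathbb{B}[t]$. Then $A=\{0\}$, which is a prime subset, or $A=\{0,2\}$, which is not, violates the equivalence, depending on whether prime ideals are required to be proper. So $\mathbb{N}$ should be read as the positive integers here.
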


As not all prime ideals arise as $I_A$ or $J_A$, Theorem \ref{theorem: alarcon and anderson} does not completely classify prime ideals of $\mathbb{B}[t]$. They conjectured that all prime ideals of $\mathbb{B}[t]$ is either $I_A$ or $J_A$ for some prime subset $A\subseteq \mathbb{N}$. 

In \cite{mincheva2025prime}, Mincheva and Sakran disproved this conjecture and completed the classification. We recall some observations from \cite{mincheva2025prime} as  we will apply similar ideas later.

\begin{observ}\cite[Section 2]{mincheva2025prime}\label{observation: basics from Kalina's}
 \begin{enumerate}
  \item 
In $\mathbb{B}[t]$, one has
\begin{equation}
(1+t^2+t^3)(1+t+t^3)=(1+t)^6=(1+t)(1+t^2)(1+t^3).
\end{equation}
In particular, the ideal generated by $(1+t)$ is not prime even though $(1+t)$ is irreducible. In fact, $\mathbb{B}[t]$ is not UFD and $(1+t)$ is an irreducible element, but not a prime element. 
\item 
If $f \in \mathbb{B}[t]$ has degree $N$ and $0 \in \text{Supp}(f)$, then one has
\begin{equation}
f(t)(1+t)^N = (1+t)^{2N}.
\end{equation}
It follows that every prime ideal of $\mathbb{B}[t]$ contains $(1+t)$, except for the prime ideal generated by $t$. 
\end{enumerate}  
\end{observ}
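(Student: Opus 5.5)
The plan is to reduce everything to support computations. In $\mathbb{B}[t]$ there is no cancellation, so for nonzero $f,g$ one has $\Supp(fg)=\Supp(f)+\Supp(g)$, the Minkowski sum in $\mathbb{N}$. Since every coefficient is $0$ or $1$, a polynomial is determined by its support.

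For part (1), I will compute the three supports. The product of the first two factors has support $\{0,2,3\}+\{0,1,3\}$. The middle expression $(1+t)^6$ has support $\{0,1\}+\cdots+\{0,1\}=\{0,\dots,6\}$. The last product has support $\{0,1\}+\{0,2\}+\{0,3\}$. Checking each sumset directly shows that all three equal $\{0,1,\dots,6\}$, which gives the identity.

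Next I show that $\angles{1+t}$ is not prime. In $\mathbb{B}[t]$ the ideal $\angles{1+t}$ is exactly the set of multiples $(1+t)g$, because $\sum r_i(1+t)=(1+t)\sum r_i$. So it suffices to show that neither $1+t^2+t^3$ nor $1+t+t^3$ is of the form $(1+t)g$, since their product lies in $\angles{1+t}$.

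If $\Supp(g)=S$, then $\Supp((1+t)g)=S\cup(S+1)$.
\begin{itemize}
\item For $\{0,2,3\}$: the element $0$ forces $0\in S$, which forces $1\in S+1$. This is a contradiction because $1\notin\{0,2,3\}$.
\item For $\{0,1,3\}$: we need $S\subseteq\{0,1,3\}$ and $S+1\subseteq\{0,1,3\}$, so $S\subseteq\{0\}$. Then $S\cup(S+1)=\{0,1\}$, which does not contain $3$.
\end{itemize}

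Irreducibility of $1+t$ follows from degree. Degrees add, because leading monomials cannot cancel. The only units are $1$, because degree $0$ and $\mathbb{B}$ has no other nonzero element. Hence any factorization of $1+t$ has one constant factor equal to $1$. An irreducible element that is not prime already shows that $\mathbb{B}[t]$ is not a UFD.

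For part (2), let $\deg f=N$ with $0\in\Supp(f)$. Then $\{0,N\}\subseteq\Supp(f)\subseteq[0,N]$. Hence $\Supp(f)+\{0,\dots,N\}=\{0,\dots,2N\}$, which is the support of $(1+t)^{2N}$.

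Now take a prime ideal $P$. I treat it as a proper nonzero ideal; the zero ideal is prime since $\mathbb{B}[t]$ has no zero divisors, and I read the statement as excluding it. Assume also $P\neq\angles{t}$. The first step is to produce some $f\in P$ with $0\in\Supp(f)$ and $\deg f\ge 1$.

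\begin{itemize}
\item \emph{Case $t\in P$.} Since $P\neq\angles{t}$, there is some element of $P$ outside $\angles{t}$, that is, with nonzero constant term. Its degree is at least $1$, since otherwise $1\in P$.
\item \emph{Case $t\notin P$.} Write a nonzero $h\in P$ as $h=t^k g$ with $0\in\Supp(g)$. Primality gives $g\in P$, and $\deg g\ge 1$ since $P$ is proper.
\end{itemize}

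Given such an $f$, we have $(1+t)^{2N}=f(1+t)^N\in P$. Repeated primality then yields $1+t\in P$.

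The only delicate point I expect is this case split producing $f$, together with the caveat about the zero ideal. Everything else is routine sumset arithmetic.
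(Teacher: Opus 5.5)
Your proof is correct. It is essentially the verification the paper leaves implicit, since the paper gives no argument beyond citing Mincheva--Sakran: the identities are Minkowski-sum computations on supports, and your case split supplies the element $f\in P$ with nonzero constant term and positive degree that the paper's ``it follows'' tacitly needs (either $t\in P$ with $P\supsetneq\langle t\rangle$, or $t\notin P$, where you strip off $t^k$ by primality). Your caveat about the zero ideal is a genuine correction: $\mathbb{B}[t]$ has no zero divisors, so $\{0\}$ is prime and does not contain $1+t$, and part (2) as printed should also exclude $\{0\}$.
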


\subsection{Congruences on semirings}\label{subsection: congrouence}

In this subsection, we briefly recall prime congruences and some related definitions. For the interested reader, we refer to \cite{mincheva2016semiring,giansiracusa2016equations,joo2018prime,maclagan2018tropical, maclagan2016tropical} for details. 

\begin{mydef}
Let $R$ be an additively idempotent semiring. 
\begin{enumerate}
  \item 
A \emph{congruence} on $R$ is an equivalence relation $\sim$ on $R$ which is compatible with addition and multiplication, i.e, for any $a,b,c,d \in R$:
\[
 a\sim b,~~ c\sim d \implies (a+c) \sim (b+d) \text{ and } (ac)\sim (bd).
\]
We will interchangeably use $\sim_C$ and $C$ to denote congruences. We often use $\sim$ whenever $C$ is clear from context. 
\item 
For pairs $\alpha=(a,b)$ and $\beta=(c,d)$ in $R\times R$, the \emph{twisted product} is defined as follows:\footnote{See \cite{joo2018prime} and \cite{joo2026varieties} for the motivation and applications.}
\[
\alpha\beta=(ac+bd,ad+bc).
\]
\item 
A \emph{prime congruence} is a proper congruence $P$ on $R$ such that whenever $\alpha\beta\in P$ (twisted product) for $\alpha,\beta\in R\times R$, one has $\alpha\in P$ or $\beta\in P$.
\end{enumerate}
\end{mydef}

\begin{mydef}\cite{maclagan2016tropical}
Suppose that $R=\bigoplus_{d\geq 0}R_d$ is a graded semiring. A congruence $C$ on $R$ is called \emph{homogeneous} if it is generated, as a congruence, by relations $a\sim b$ such that $a$ and $b$ are homogeneous of the same degree. Equivalently, $C$ is generated by degree-preserving relations, so that the quotient $R/C$ inherits a grading from $R$.  
\end{mydef}

For any polynomial $F=\sum a_{\mathbf{u}}x^{\mathbf{u}}$ with coefficients in a semiring,  we define the \emph{homogenization} of $F$ as follows:
\begin{equation}
F^h:=\sum a_{\mathbf{u}}x^\mathbf{u}x_0^{\deg{F}-|\mathbf{u}|},
\end{equation}
where $|\mathbf{u}|=u_1+\dots u_n$ for $\mathbf{u}=(u_1,\dots,u_n) \in \mathbb{N}^n$. 

\begin{mydef}
\cite[Definition~2.8]{maclagan2016tropical}
Let $A=\mathbb B[x_1,\ldots,x_n]$ and
$S=\mathbb B[x_0,\ldots,x_n]$.
Let $F\sim G$ be a relation in $A$. If neither side is zero, let
$D=\max\{\deg(F),\deg(G)\}$ and define its homogenization, which is a
relation in $S$, by
\begin{equation}\label{eq: homoge notation}
(F\sim G)^h
:=
x_0^{D-\deg(F)}F^h
\sim
x_0^{D-\deg(G)}G^h.
\end{equation}
If exactly one side is zero, take $D$ to be the degree of the nonzero
side and homogenize the zero side as $0$; if both sides are zero, the
relation is trivial.
\end{mydef}

\begin{mydef}
For a congruence $C$ on $A=\mathbb{B}[x_1,\dots,x_n]$, define its homogenization to be the congruence on $S=\mathbb B[x_0,\ldots,x_n]$:
\begin{equation}
C^h:=\langle(F\sim G)^h \mid (F\sim G)\in C\rangle\subseteq S\times S.
\end{equation}    
\end{mydef}

\begin{mythm}\cite[Proposition 2.7]{maclagan2016tropical}\label{theorem: congruence homogeneous elts}
 Let $S=\mathbb{B}[x_0,\dots,x_n]$ and $C$ be a homogeneous congruence on $S$. If $(F,G) \in C$, then $(F_d,G_d) \in C$ for all $d \in \mathbb{N}$, where $F_d$ is the degree $d$ part of $F$.
\end{mythm}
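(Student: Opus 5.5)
The plan is to show that the set of pairs whose homogeneous components are all related by $C$ is itself a congruence containing the generators of $C$. Minimality of $C$ then forces $C$ to lie inside it. Concretely, I define
\[
C':=\{(F,G)\in S\times S \mid (F_d,G_d)\in C \text{ for all } d\in\mathbb{N}\}.
\]
The statement to prove is exactly $C\subseteq C'$. Since $C$ is homogeneous, it is the smallest congruence containing some set $\Gamma$ of relations $a\sim b$ with $a,b$ homogeneous of the same degree. So it suffices to show two things: that $C'$ is a congruence, and that $\Gamma\subseteq C'$.

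\textbf{Generators.} Let $a\sim b$ lie in $\Gamma$, with $a,b$ homogeneous of common degree $e$. Then $(a_e,b_e)=(a,b)\in C$. For $d\neq e$ we have $(a_d,b_d)=(0,0)$, which lies in $C$ by reflexivity. Hence $(a,b)\in C'$. (If one of $a,b$ is $0$, regard $0$ as homogeneous of every degree; the argument is unchanged.)

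\textbf{$C'$ is a congruence.}
\begin{enumerate}
\item Reflexivity, symmetry and transitivity are checked degree by degree, using the corresponding properties of $C$.
\item Compatibility with addition: homogeneous components are additive, $(F+H)_d=F_d+H_d$. So if $(F,G),(H,K)\in C'$, then $(F_d+H_d,\,G_d+K_d)\in C$ for every $d$, because $C$ respects addition.
\item Compatibility with multiplication is the only step with content. Here I use
\[
(FH)_d=\sum_{i+j=d}F_iH_j .
\]
For each pair with $i+j=d$, the pairs $(F_i,G_i)$ and $(H_j,K_j)$ lie in $C$, so $(F_iH_j,\,G_iK_j)\in C$ by multiplicativity of $C$. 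Summing over the finitely many pairs $(i,j)$ with $i+j=d$ and using additivity of $C$ gives $\bigl((FH)_d,(GK)_d\bigr)\in C$.
\end{enumerate}

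No subtraction is used anywhere, so idempotence of $\mathbb{B}$ causes no difficulty. Once these checks are done, $C\subseteq C'$ by minimality of $C$, which is precisely the claim. I expect no real obstacle; the only care needed is the bookkeeping in the multiplicative step, where the degree-$d$ component of a product must be written as the finite sum of products of components above.
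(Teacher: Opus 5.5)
Your proof is correct and complete. The paper gives no argument of its own here; it imports the statement from \cite{maclagan2016tropical}. So the only comparison is with other standard routes.

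Your approach is the natural self-contained one: show that $C'=\{(F,G)\mid (F_d,G_d)\in C \text{ for all } d\}$ is a congruence containing the homogeneous generators, then invoke minimality of $C$. A common alternative uses the explicit description of the congruence generated by a set $\Gamma$. That congruence is the transitive closure of the diagonal together with the elementary relations $c+ha\sim c+hb$, where $(a,b)\in\Gamma\cup\Gamma^{-1}$. One then checks that each elementary step splits degreewise: if $a,b$ are homogeneous of degree $e$, the degree-$d$ parts are $c_d+h_{d-e}a$ and $c_d+h_{d-e}b$.

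Your minimality argument avoids needing that description. It uses only $(F+H)_d=F_d+H_d$ and $(FH)_d=\sum_{i+j=d}F_iH_j$, so it works verbatim over any graded semiring. The converse also holds, since $(F,G)$ is the sum of the pairs $(F_d,G_d)$. So the property you establish in fact characterizes homogeneous congruences.
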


In \cite{giansiracusa2016equations}, Jeffrey and Noah Giansiracusa introduced \emph{bend congruences}, providing a scheme-theoretic framework for tropical geometry in which the tropicalization of an algebraic variety is realized as the set of $\mathbb{T}$-points of an associated tropical scheme. Building on this perspective, Maclagan and Rinc\'on \cite{maclagan2018tropical} introduced tropical ideals, giving a purely ideal-theoretic and combinatorial framework for studying bend congruences.

\begin{mydef}\label{definition: bend from ideal}\cite{giansiracusa2016equations}\label{definition bend}
 Let $P\subseteq \mathbb{B}[x_1,\dots,x_n]$ be an arbitrary ideal. The \emph{bend congruence} of $P$ is defined as follows:
\begin{equation}
\Bend(P):=\langle f\sim f_{\widehat{\mathbf u}}\mid f\in P,\ \mathbf u\in\operatorname{Supp}(f)\rangle, 
\end{equation}
where $f_{\widehat{\mathbf u}}$ denotes the polynomial obtained from $f$ by deleting the term $t^{\mathbf u}$ written in the multi-index notation. 
\end{mydef}

\subsection{Localization of semirings and Prime congruence spectra}\label{subsection: localization}

In this section, we briefly review the notion of localization of semirings and how congruences behave under it. We refer the reader to \cite{Tanaka26} for more details.

Let $R$ be a semiring, and let $T\subseteq R$ be a multiplicative subset. Unless otherwise specified, we always assume that $1\in T$ and $0\notin T$.

As in the case for rings, we define an equivalence relation $\sim$ on $R\times T$:
\[
(a,b) \sim (c,d) \iff adt = bct \text{ for some $t \in T$}.
\]
Denote the equivalence class of $(a,b)$ by $\frac{a}{b}$. One can easily see that $(R\times T)/\sim$ is a semiring with 
\[
\frac{a}{b}+\frac{c}{d}:=\frac{ad+bc}{bd}, \quad \frac{a}{b}\frac{c}{d}:=\frac{ac}{bd}.
\]
One further has a canonical homomorphism (localization):
\[
\phi:R \to T^{-1}R, \quad a \mapsto \frac{a}{1}.
\]

Now, one has the following.

\begin{mydef}\cite[Section 2]{Tanaka26}
Let $T\subseteq$ be multiplicative subset of an idempotent semiring $R$ and $C$ be a congruence on $R$. The \emph{localization} $T^{-1}C$ is the congruence on $T^{-1}R$ defined as follows:
\[
T^{-1}C:=\angles{\frac{a}{1} \sim \frac{b}{1}}_{(a,b) \in C}.
\]
If $T=\{1,x,x^2,\dots\}$ for some $x \in R$, we write $R_{x_i}$ for $T^{-1}R$.
\end{mydef}

Suppose that an idempotent semiring $R$ is graded and $x_i$ is an homogeneous element of $R$. Then, $R_{x_i}$ is naturally equipped with grading:
\begin{equation}\label{eq: graded}
\deg(a/x_i^k):=\deg(a) - k\deg(x_i).
\end{equation}
We let $(R_{x_i})_0$ be the degree-zero component of $R_{x_i}$, viewed as a graded semiring with \eqref{eq: graded}.

\begin{mydef}\label{definition: degree-zero}
Let $R$ be a graded idempotent semiring and $x_i$ is an homogeneous element. By the \emph{degree-zero localization}, we mean the following set:
\[
C_{(x_i)}:=C_{x_i}\cap((R_{x_i})_0\times(R_{x_i})_0).
\]
\end{mydef}

Since $C_{x_i}$ is a congruence on $R_{x_i}$, its restriction
\[
C_{(x_i)}
=
C_{x_i}\cap
\bigl((R_{x_i})_0\times(R_{x_i})_0\bigr)
\]
is a congruence on $(R_{x_i})_0$ by the following lemma.

\begin{lemma}
Let $\varphi:R\to R'$ be a homomorphism of semirings and let $C$ be
a congruence on $R'$. Then
\[
\varphi^{-1}(C)
:=
\{(a,b)\in R\times R:(\varphi(a),\varphi(b))\in C\}
\]
is a congruence on $R$.
\end{lemma}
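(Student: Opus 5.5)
We verify the conditions of a congruence for $\varphi^{-1}(C)$ directly, transporting each property of $C$ back along $\varphi$; the only inputs are that $C$ is a congruence on $R'$ and that $\varphi$ preserves addition and multiplication.

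\emph{Equivalence relation.} Reflexivity holds because $(\varphi(a),\varphi(a))\in C$, since $C$ is reflexive. If $(a,b)\in\varphi^{-1}(C)$, then $(\varphi(a),\varphi(b))\in C$, so symmetry of $C$ gives $(\varphi(b),\varphi(a))\in C$, that is, $(b,a)\in\varphi^{-1}(C)$. For transitivity, suppose $(a,b),(b,c)\in\varphi^{-1}(C)$. Then $(\varphi(a),\varphi(b))\in C$ and $(\varphi(b),\varphi(c))\in C$, so transitivity of $C$ gives $(\varphi(a),\varphi(c))\in C$.

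\emph{Compatibility with the operations.} Let $(a,b),(c,d)\in\varphi^{-1}(C)$. Since $C$ is a congruence, $(\varphi(a)+\varphi(c),\varphi(b)+\varphi(d))\in C$ and $(\varphi(a)\varphi(c),\varphi(b)\varphi(d))\in C$. Because $\varphi$ is a homomorphism, $\varphi(a)+\varphi(c)=\varphi(a+c)$ and $\varphi(a)\varphi(c)=\varphi(ac)$, and similarly for $b,d$. These two memberships are therefore exactly $(a+c,b+d)\in\varphi^{-1}(C)$ and $(ac,bd)\in\varphi^{-1}(C)$.

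No step is a genuine obstacle. The only point needing care is the application the paper has in mind: there $\varphi$ is the inclusion $(R_{x_i})_0\hookrightarrow R_{x_i}$. This map is a semiring homomorphism because $(R_{x_i})_0$ is a subsemiring: it contains $0$ and $1$, and it is closed under sums and products, since the grading \eqref{eq: graded} is additive under multiplication and homogeneous degree-zero elements add to degree-zero elements. With this $\varphi$, the preimage $\varphi^{-1}(C_{x_i})$ is precisely $C_{x_i}\cap\bigl((R_{x_i})_0\times(R_{x_i})_0\bigr)=C_{(x_i)}$. So the lemma shows that $C_{(x_i)}$ is a congruence on $(R_{x_i})_0$.
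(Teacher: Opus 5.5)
Your proof is correct and follows the same approach as the paper. The paper simply asserts that $\varphi^{-1}(C)$ is an equivalence relation and that compatibility follows from $C$ being a congruence and $\varphi$ being a homomorphism; you verify exactly these points one by one, and your remark that the inclusion $(R_{x_i})_0\hookrightarrow R_{x_i}$ yields $C_{(x_i)}$ as a congruence matches how the paper uses the lemma.
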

\begin{proof}
It is clear that $\varphi^{-1}(C)$ is an equivalence relation. The fact that it's a congruence relation follows from the fact that $C$ is a congruence relation and $\varphi$ is a homomorphism. 
\end{proof}

Let $R$ be a semiring. By an $R$-algebra, we mean a semiring $A$ equipped with a semiring homomorphism (structure homomorphism) $\iota:R\to A$, satisfying the analogous axioms to those in the classical setting. For instance, $\mathbb{B}[x_1,\dots,x_n]$ is a $\mathbb{B}$-algebra with a natural map $\iota:\mathbb{B}\to \mathbb{B}[x_1,\dots,x_n]$.

Let $A$ be a $\mathbb{T}$-algebra with structure homomorphism
$\iota:\mathbb{T}\rightarrow A$. Let
\[
\Delta_{\mathbb{T}}
:=
\{(a,a)\mid a\in\mathbb{T}\}
\]
denote the diagonal congruence on $\mathbb{T}$. For a congruence $Q$ on $A$, we define the restriction of $Q$ to $\mathbb{T}$ via $\iota$ by
\[
Q|_{\mathbb{T}}
:=
\{(a,b)\in\mathbb{T}\times\mathbb{T}\mid
(\iota(a),\iota(b))\in Q\}.
\]

Following \cite[Definition~2.15]{Tanaka26}, a congruence $Q$ on $A$
is said to \emph{lie over $\mathbb T$} if the composite
\begin{equation}
\mathbb T\xrightarrow{\iota}A\longrightarrow A/Q    
\end{equation}
is injective, or equivalently, if
\[
Q|_{\mathbb T}=\Delta_{\mathbb T}.
\]

A congruence $Q$ on $A$ is called
\emph{geometric} if the induced map
$\mathbb T\longrightarrow A/Q$
is an isomorphism.

\begin{mydef}\label{definition: prime congreunce spectra}
Let $A$ be an idempotent semiring.
\begin{enumerate}
    \item 
The \emph{prime congruence spectrum} of $A$ is defined as follows:
\begin{equation}
\operatorname{SpecCong}(A):=\{Q\subseteq A\times A \mid Q\text{ is a prime congruence on }A\}.  
\end{equation}
\item 
If $C$ is a congruence on $A$, define
\begin{equation}
V(C):=\{Q\in\operatorname{SpecCong}(A) \mid C\subseteq Q\}.    \end{equation}
\item 
If $A=\mathbb{B}[t_1,\dots,t_n]$, we write $\mathbb{A}_{\mathbb{B}}^n$ for $\operatorname{SpecCong}(A)$, called an \emph{affine space} of dimension $n$ over $\mathbb{B}$.
\end{enumerate}
\end{mydef}

The following result is elementary, but we include the proof for completeness and for readers who are less familiar with congruences.

\begin{lem}\label{lemma: zariski}
Let $A$ be an idempotent semiring. The sets $V(C)$ define a topology on $\operatorname{SpecCong}(A)$ by declaring them to be closed.
\end{lem}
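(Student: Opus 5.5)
We verify the three closed-set axioms for the family $\{V(C)\}$, where $C$ ranges over congruences on $A$: the empty set and the whole space are closed, arbitrary intersections of closed sets are closed, and finite unions of closed sets are closed. The first two are formal, and the third uses primality through the twisted product.

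\emph{Empty set and whole space.} Take $C=\Delta_A$, the diagonal congruence. Every congruence contains it, so $V(\Delta_A)=\operatorname{SpecCong}(A)$. Take $C=A\times A$. Prime congruences are proper by definition, so none contains $A\times A$, and hence $V(A\times A)=\emptyset$.

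\emph{Arbitrary intersections.} Given a family $\{C_i\}_{i\in I}$, let $\sum_i C_i$ be the congruence generated by $\bigcup_i C_i$. This makes sense because an intersection of congruences is a congruence, so generated congruences exist. A congruence $Q$ contains every $C_i$ if and only if it contains $\bigcup_i C_i$, which holds if and only if $Q\supseteq \sum_i C_i$. Therefore $\bigcap_i V(C_i)=V(\sum_i C_i)$.

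\emph{Finite unions.} Given congruences $C_1,C_2$, we take $C_1C_2$ to be the congruence generated by all twisted products $\alpha\beta$ with $\alpha\in C_1$ and $\beta\in C_2$, and we claim
\[
V(C_1)\cup V(C_2)=V(C_1C_2).
\]

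For the inclusion $\subseteq$, suppose $Q\supseteq C_1$. We need $\alpha\beta\in Q$ whenever $\alpha=(a,b)\in Q$ and $\beta=(c,d)\in A\times A$ is arbitrary. Since $a\sim_Q b$, compatibility with multiplication gives $ac\sim bc$ and $bd\sim ad$. Compatibility with addition then gives
\[
ac+bd\sim bc+ad.
\]
This is exactly the statement $\alpha\beta\in Q$. So $Q$ contains all the generators of $C_1C_2$, hence contains $C_1C_2$. The case $Q\supseteq C_2$ is symmetric, since the twisted product is commutative.

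For the inclusion $\supseteq$, suppose $Q\supseteq C_1C_2$ but $Q\not\supseteq C_1$. Pick $\alpha\in C_1\setminus Q$. For every $\beta\in C_2$ we have $\alpha\beta\in C_1C_2\subseteq Q$. Since $Q$ is prime and $\alpha\notin Q$, it follows that $\beta\in Q$. Hence $C_2\subseteq Q$, so $Q\in V(C_2)$.

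The main point needing care is the forward inclusion: one must check that a congruence is closed under twisted multiplication by arbitrary pairs. This reduces to the short compatibility computation above. Everything else is formal.
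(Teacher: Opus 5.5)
Your proof is correct and follows the paper's argument essentially step for step: $V(\Delta)$ and $V(A\times A)$ give the whole space and the empty set, the congruence generated by $\bigcup_i C_i$ handles arbitrary intersections, and the twisted-product congruence $C_1C_2$ together with primality gives $V(C_1C_2)\subseteq V(C_1)\cup V(C_2)$. The only difference is that you write out the compatibility computation $ac+bd\sim_Q ad+bc$ for the inclusion $V(C_1)\cup V(C_2)\subseteq V(C_1C_2)$, which the paper simply calls immediate.
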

\begin{proof}
Let $\Delta$ be the diagonal congruence. Then, since prime congruences are proper, we have
\[
V(\Delta)=\operatorname{SpecCong}(A) \quad \text{and} \quad V(A\times A)=\emptyset.
\]
If $\{C_\lambda\}_{\lambda\in\Lambda}$ is any family of congruences, then 
\begin{equation}
\bigcap_{\lambda\in\Lambda}V(C_\lambda)=V\left(\left\langle\bigcup_{\lambda\in\Lambda}C_\lambda\right\rangle\right).   
\end{equation}
Indeed, for any $Q\in\operatorname{SpecCong}(A)$, one has 
\begin{equation}\label{eq: equi con}
 Q\in\bigcap_{\lambda\in\Lambda}V(C_\lambda)
\Longleftrightarrow
C_\lambda\subseteq Q~\forall\lambda \in \Lambda
\Longleftrightarrow
\bigcup_{\lambda\in\Lambda}C_\lambda\subseteq Q.   
\end{equation}
Since $Q$ is a congruence, \eqref{eq: equi con} is equivalent to 
\[
\left\langle\bigcup_{\lambda\in\Lambda}C_\lambda\right\rangle\subseteq Q,
\]
where $\left\langle\bigcup_\lambda C_\lambda\right\rangle$ denotes the smallest congruence containing all the $C_\lambda$. Hence, we have
\[
\bigcap_{\lambda\in\Lambda}V(C_\lambda) = V\left(\left\langle\bigcup_{\lambda\in\Lambda}C_\lambda\right\rangle\right).
\]

For finite unions, define the product congruence $CD:=\langle \alpha\beta:\alpha\in C,\ \beta\in D\rangle,$ where for pairs $\alpha \beta$ is given by the twisted product. Then, we have
\begin{equation}
   V(C)\cup V(D)=V(CD). 
\end{equation}
The inclusion $V(C)\cup V(D)\subseteq V(CD)$ is immediate. Conversely, if $Q\in V(CD)$ and $C\not\subseteq Q$, choose $\alpha\in C\setminus Q$. For every $\beta\in D$, one has $\alpha\beta\in CD\subseteq Q$. Since $Q$ is prime and $\alpha\notin Q$, it follows that $\beta\in Q$. Hence $D\subseteq Q$, so $Q\in V(D)$. Thus $V(CD)\subseteq V(C)\cup V(D)$. Therefore the sets $V(C)$ satisfy the closed-set axioms.
\end{proof}

In what follows, we consider $\operatorname{SpecCong}(A)$ for an idempotent semiring $A$ as a topological space with topology in Lemma \ref{lemma: zariski}.

\begin{mydef}\cite{Tanaka26}
The \emph{relative prime-congruence spectrum} of $A$ over $\mathbb T$ is
\[
\operatorname{SpecCong}_{\mathbb T}(A)
:=
\left\{
Q\in\operatorname{SpecCong}(A) \mid 
Q|_{\mathbb T}=\Delta_{\mathbb T}
\right\},
\]
equipped with the subspace topology inherited from
$\operatorname{SpecCong}(A)$. A prime congruence of $A$ which are in $\operatorname{SpecCong}_{\mathbb T}(A)$ is said to be \emph{lying over} $\mathbb{T}$.
\end{mydef}

\begin{rmk}
Tanaka does not use the terminology ``relative prime-congruence spectrum.'' Instead, he uses two different notations, $\operatorname{SpecCong}(A)$ and $\operatorname{SpecCong}_{\mathbb T}(A)$, depending on whether $A$ is viewed as a semiring or as a $\mathbb{T}$-algebra, respectively. 

In our setting, this distinction does not arise over $\mathbb{B}$, since every idempotent semiring has a canonical structure of a $\mathbb{B}$-algebra. More precisely, let $Q$ be a proper congruence on a $\mathbb{B}$-algebra $A$. Then the restriction of $Q$ to $\mathbb{B}$ is necessarily the diagonal congruence. Indeed, if it were not, then we would have $0\sim_Q 1$, which would imply that $Q$ is the improper congruence. Therefore, every prime congruence on a $\mathbb{B}$-algebra lies over $\mathbb{B}$.
\end{rmk}

\begin{mythm}\cite{Tanaka26}\label{theorem: Tanaka}
Let $A$ be an idempotent semiring. 
\begin{enumerate}
    \item 
If $C$ is a congruence on $A$, then pullback along the
    quotient map $\pi_C:A\to A/C$ induces a homeomorphism
    \[
    \operatorname{SpecCong}(A/C)
    \xrightarrow{\sim}
    V(C).
    \]
\item 
Let $T\subseteq A$ be a multiplicative subset and
    let $\lambda:A\to T^{-1}A$ be the localization map. Then pullback
    induces a topological embedding
    \[
    \operatorname{SpecCong}(T^{-1}A)
    \longrightarrow
    \operatorname{SpecCong}(A)
    \]
    whose image is $D(T):=
    \left\{
    Q\in\operatorname{SpecCong}(A):
    (t,0)\notin Q\text{ for every }t\in T
    \right\}.$ If $T$ is finitely generated as a monoid, then $D(T)$ is open.
\item 
If $C\cap(T\times\{0\})=\varnothing$, then there is a natural isomorphism
    \[
    T^{-1}A/T^{-1}C
    \cong
    \pi_C(T)^{-1}(A/C),
    \]
    and the induced map on prime-congruence spectra identifies
    \[
    \operatorname{SpecCong}\!\left(
    \pi_C(T)^{-1}(A/C)
    \right)
    \]
    homeomorphically with $V(C)\cap D(T)$.

    \item Let $\lambda:A\to T^{-1}A$ be the localization map and let
    $C$ be a congruence on $A$. Then
    \[
    T^{-1}C
    =
    \left\{
    \left(\frac{a}{t},\frac{b}{t}\right)  \mid
    (a,b)\in C,\ t\in T
    \right\}.
    \]
    Consequently,
    \[
    \frac{a}{s}\sim_{T^{-1}C}\frac{b}{t} \iff \exists~v \in T \textrm{ such that }  (vta,vsb)\in C.
    \]
\end{enumerate}
\end{mythm}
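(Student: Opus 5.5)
The plan treats the four parts in the order (1), (4), (2), (3), since (2) uses the explicit description in (4), and (3) is obtained by combining the others.

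\textbf{Part (1).} Because $\pi_C:A\to A/C$ is a surjective homomorphism, congruences $Q'$ on $A/C$ correspond bijectively to congruences $Q$ on $A$ containing $C$. The maps are $Q'\mapsto \pi_C^{-1}(Q')$ and $Q\mapsto (\pi_C\times\pi_C)(Q)$; the latter is a congruence precisely because $C\subseteq Q$.

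For primality, note that the twisted product is built from sums and products of the coordinates. Hence $(\pi_C\times\pi_C)(\alpha\beta)$ is the twisted product of the images of $\alpha$ and $\beta$. It follows that $\alpha\beta\in\pi_C^{-1}(Q')$ if and only if $\pi_C(\alpha)\pi_C(\beta)\in Q'$. Every pair in $A/C$ lifts, so $Q'$ is prime if and only if $\pi_C^{-1}(Q')$ is. Properness is preserved because $(1,0)$ corresponds to $(1,0)$.

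For the homeomorphism, $V(D')\subseteq\operatorname{SpecCong}(A/C)$ corresponds to $V(\langle C\cup\pi_C^{-1}(D')\rangle)$. Conversely, $V(D)\cap V(C)$ corresponds to $V$ of the image of $\langle C\cup D\rangle$. So closed sets match on both sides.

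\textbf{Part (4).} Let $E$ denote the displayed set $\{(a/t,b/t)\}$. The inclusion $E\subseteq T^{-1}C$ holds because $(a/t,b/t)=(1/t)\cdot(a/1,b/1)$. For the reverse inclusion it suffices to show that $E$ is a congruence, since it contains all generators $(a/1,b/1)$.

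Reflexivity and symmetry are clear. For addition and multiplication, bring two pairs to the common denominator $ts$: the pair $(a/t,b/t)$ becomes $(sa/ts,\,sb/ts)$, and $(sa,sb)\in C$.

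Transitivity is the delicate step. Suppose $(a/t,b/t)\in E$ and $(b'/s,c/s)\in E$ with $b/t=b'/s$, so that $usb=utb'$ for some $u\in T$. Multiply the first pair by $us$ and the second by $ut$, both of which lie in $C$. This puts both pairs over the denominator $uts$ with matching middle entries, and transitivity of $C$ gives $(usa,utc)\in C$.

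The stated criterion then follows: if $(a/s,b/t)$ is written as $(a'/r,b'/r)$, then unwinding the equalities of fractions produces the witness $v$.

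\textbf{Part (2).} First, the pullback $\lambda^{-1}(Q)$ of a prime $Q$ is prime, by the same twisted-product compatibility as in (1). It avoids every $(t,0)$: since $t/1$ is a unit, $(t/1,0)\in Q$ would force $(1,0)\in Q$, contradicting properness.

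Conversely, let $P\in D(T)$. The key cancellation is this: if $(va,vb)\in P$ with $v\in T$, then $(va,vb)=(v,0)(a,b)$ as a twisted product, and since $(v,0)\notin P$, primality gives $(a,b)\in P$. Combined with (4), this shows $\lambda^{-1}(T^{-1}P)=P$ and that $T^{-1}P$ is proper.

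To show $T^{-1}P$ is prime, clear denominators in a twisted product $\alpha\beta\in T^{-1}P$ using (4). This reduces it to a twisted product in $P$, multiplied by a scalar pair $(v,0)$, which is then cancelled as above. Injectivity comes from the same description in (4): $Q$ is recovered from its pullback because $(a/s,b/t)\in Q$ if and only if $(ta,sb)\in\lambda^{-1}(Q)$. Closed sets correspond via $V(D)\leftrightarrow V(T^{-1}D)$.

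If $T$ is generated by $t_1,\dots,t_k$, then by primality $(t_1^{e_1}\cdots t_k^{e_k},0)\in P$ forces some $(t_i,0)\in P$. Hence the complement of $D(T)$ is $\bigcup_i V(\langle(t_i,0)\rangle)$, a finite union of closed sets, so $D(T)$ is open.

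\textbf{Part (3).} Both $T^{-1}A/T^{-1}C$ and $\pi_C(T)^{-1}(A/C)$ receive maps from $A$ that kill $C$ and invert $T$. The hypothesis $C\cap(T\times\{0\})=\varnothing$ ensures $0\notin\pi_C(T)$, so the second localization is defined. The universal properties then give mutually inverse maps. Applying (2) to $\pi_C(T)\subseteq A/C$ and then (1) identifies the spectrum with $V(C)\cap D(T)$.

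I expect the main obstacle to be the primality of $T^{-1}P$ in (2), since clearing denominators inside a twisted product requires careful bookkeeping of the witnesses $v$.
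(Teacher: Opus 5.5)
Your proof is correct, but it takes a different route from the paper, which proves almost none of this statement itself.

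\textbf{What the paper does.} Parts (1)--(3), and the description $T^{-1}C=\{(a/t,b/t)\mid (a,b)\in C,\ t\in T\}$ in (4), are cited from Tanaka (Propositions 2.8, 2.12, 2.13 and Lemma 2.9). The only argument written out is the membership criterion in (4). There one starts from $(c,d)\in C$ and $u\in T$ with $a/s=c/u$ and $b/t=d/u$, picks witnesses $r_1,r_2$ for these equalities, multiplies $(c,d)$ by $r_1r_2st$, and takes $v=r_1r_2u$. This is exactly the ``unwinding'' you allude to.

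\textbf{What you do instead.} You make the statement self-contained.
\begin{itemize}
\item In (4) you show directly that the displayed set is already a congruence. The only nontrivial step is transitivity, and you handle it correctly: rescaling by $us$ and $ut$ makes the middle entries agree in $A$ itself, not merely as fractions.
\item In (2) you isolate the one fact that makes localization compatible with twisted-product primality: $(v,0)(a,b)=(va,vb)$. Hence a prime congruence avoiding $(v,0)$ can cancel $v$. This single observation gives $\lambda^{-1}(T^{-1}P)=P$, and the properness and primality of $T^{-1}P$.
\item Via $(t,0)(t',0)=(tt',0)$, the same idea gives the openness of $D(T)$ when $T$ is finitely generated.
\item Your part (3), via the shared universal property and the composition of (2) with (1), is also fine.
\end{itemize}

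\textbf{One step to make explicit.} In the embedding claim of (2), say that every congruence $D'$ on $T^{-1}A$ equals $T^{-1}(\lambda^{-1}D')$, because elements of $T$ become units. Then every closed set of $\operatorname{SpecCong}(T^{-1}A)$ has the form $V(T^{-1}D)$, and its image is $V(D)\cap D(T)$. This is implicit in your correspondence $V(D)\leftrightarrow V(T^{-1}D)$ but is not stated.

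\textbf{Trade-off.} Citing Tanaka keeps the paper short and aligned with his more general framework of $\mathbb{T}$-algebras and relative spectra, at the cost of a black box. Your argument removes that dependence. It also shows that in the idempotent setting, with the twisted-product notion of primality used throughout this paper, nothing beyond the cancellation trick and standard universal properties is needed.
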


\begin{proof}
The first assertion is \cite[Proposition~2.8]{Tanaka26}, the second is
\cite[Proposition~2.12]{Tanaka26}, and the third is
\cite[Proposition~2.13]{Tanaka26}. The fourth assertion is \cite[Lemma~2.9]{Tanaka26}; we prove the
stated consequence. For the consequence, if
$(vta,vsb)\in C$, then
\[
\frac{a}{s}
=
\frac{vta}{vst}
\sim_{T^{-1}C}
\frac{vsb}{vst}
=
\frac{b}{t}.
\]
Conversely, Tanaka's description gives a relation $(c,d)\in C$ and
$u\in T$ such that
\[
\frac{a}{s}=\frac{c}{u}
\quad \textrm{and}\quad
\frac{b}{t}=\frac{d}{u}.
\]
By the defining equality relation in the localization, there exist
$r_1,r_2\in T$ such that
\[
r_1au=r_1cs
\qquad\text{and}\qquad
r_2bu=r_2dt.
\]
Since $(c,d)\in C$, compatibility of $C$ with multiplication gives
\[
(r_1r_2stc,r_1r_2std)\in C.
\]
Using the two displayed equalities, this becomes
\[
(r_1r_2uta,r_1r_2usb)\in C.
\]
Taking $v=r_1r_2u\in T$ proves the stated consequence.
\end{proof}

The notion of radicals was introduced in \cite[Definition~3.1]{joo2018prime}. Tanaka proved the following.

\begin{proposition}\cite[Lemma~2.6]{Tanaka26}
\label{prop:Tanaka-irreducibility}
For a congruence $C$ on an idempotent semiring $A$, set
\[
\sqrt[\operatorname{pr}]{C}
:=
\bigcap_{\substack{Q\in\operatorname{SpecCong}(A)\\ C\subseteq Q}}Q.
\]
Then, one has $V(C)=V\!\left(\sqrt[\operatorname{pr}]{C}\right)$. If $V(C)\neq\varnothing$, then $V(C)$ is irreducible if and only if
$\sqrt[\operatorname{pr}]{C}$ is a prime congruence. In that case, one has 
\[
V(C)
=
V\!\left(\sqrt[\operatorname{pr}]{C}\right)
=
\overline{\left\{\sqrt[\operatorname{pr}]{C}\right\}}.
\]
\end{proposition}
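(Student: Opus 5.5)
The plan is to prove the three assertions in order, using only the definitions of $V(C)$, of prime congruences, and of the closure operator in $\operatorname{SpecCong}(A)$.

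\textbf{First assertion.} By definition of $\sqrt[\operatorname{pr}]{C}$, every $Q\in V(C)$ contains $\sqrt[\operatorname{pr}]{C}$, so $V(C)\subseteq V(\sqrt[\operatorname{pr}]{C})$. Conversely, $C\subseteq \sqrt[\operatorname{pr}]{C}$ because $\sqrt[\operatorname{pr}]{C}$ is an intersection of congruences each containing $C$. Hence any prime containing $\sqrt[\operatorname{pr}]{C}$ contains $C$. This gives $V(C)=V(\sqrt[\operatorname{pr}]{C})$.

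\textbf{Closure of a point.} Next I record that for any prime $P$ one has $\overline{\{P\}}=V(P)$. The set $V(P)$ is closed and contains $P$. Any closed set $V(D)$ containing $P$ satisfies $D\subseteq P$, hence $V(P)\subseteq V(D)$. More generally, for any subset $Y\subseteq \operatorname{SpecCong}(A)$, the same argument gives $\overline{Y}=V(\bigcap_{Q\in Y}Q)$, since $\bigcap_{Q\in Y}Q$ is a congruence.

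\textbf{Prime radical implies irreducible.} If $\sqrt[\operatorname{pr}]{C}$ is prime, then it is itself a member of $V(C)$. By the previous paragraph, $V(C)=V(\sqrt[\operatorname{pr}]{C})=\overline{\{\sqrt[\operatorname{pr}]{C}\}}$. The closure of a point is irreducible, which gives this direction together with the final displayed equality.

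\textbf{Irreducible implies prime radical.} Assume $V(C)\neq\varnothing$ is irreducible and set $R=\sqrt[\operatorname{pr}]{C}$. Properness of $R$ follows from nonemptiness, since $R$ is contained in some proper prime. Suppose $\alpha\beta\in R$ with $\alpha,\beta\notin R$. Let $\langle\alpha\rangle$ denote the congruence generated by the single pair $\alpha$, and similarly for $\beta$. Every $Q\in V(C)$ contains $\alpha\beta$, so primality of $Q$ gives $\alpha\in Q$ or $\beta\in Q$. Thus
\[
V(C)=\bigl(V(C)\cap V(\langle\alpha\rangle)\bigr)\cup\bigl(V(C)\cap V(\langle\beta\rangle)\bigr),
\]
a union of two closed subsets. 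By irreducibility, one of them, say the first, equals $V(C)$. Then $\alpha$ lies in every $Q\in V(C)$, so $\alpha\in R$, a contradiction.

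\textbf{Main subtlety.} No congruence-level product identity is needed. The only care point is checking that an intersection of congruences is a congruence, which is routine. The rest is the standard argument for ideals.
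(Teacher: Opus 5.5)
Your proof is correct and complete. Every step follows from the definitions: the equality $V(C)=V\bigl(\sqrt[\operatorname{pr}]{C}\bigr)$, the identity $\overline{Y}=V\bigl(\bigcap_{Q\in Y}Q\bigr)$, and both directions of the equivalence. You are also right that the product congruence $CD$ from Lemma~\ref{lemma: zariski} is not needed, since you split $V(C)$ using $V(\langle\alpha\rangle)$, $V(\langle\beta\rangle)$ and the primality of each $Q\in V(C)$.

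The paper gives no proof of its own here; it imports the statement from \cite[Lemma~2.6]{Tanaka26}. Your write-up is a self-contained version of the generic-point argument that this citation stands in for. It also justifies $\overline{\{Q\}}=V(Q)$, which the paper uses without comment in Proposition~\ref{prop:saturated-principal-open-dense}.
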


\section{Prime ideals generated by linear polynomials in $\mathbb{B}[x,y]$}\label{section: prime ideals}

In this section, we classify prime ideals in $\mathbb{B}[x,y]$ that are generated by linear polynomials. In the one-variable case, the classification follows directly from \cite[Theorem 5.1]{mincheva2025prime} or \cite{alarcon1994commutative}: the only prime ideals in $\mathbb{B}[t]$ generated by linear polynomials are
\[
\langle t\rangle,\qquad \langle t,1+t\rangle.
\]
The situation becomes considerably more subtle in $\mathbb{B}[x,y]$, as we will see below. We divide our analysis according to the possible combinations of $x$ and $y$ being contained in a prime ideal.

\begin{pro}\label{pro1}
The prime ideals of $\mathbb{B}[x,y]$ generated by linear polynomials containing both $x$ and $y$ are the following:
\begin{equation}\label{eq: primes}
\angles{x,y},~\angles{x,y,1+x,1+y},~\angles{x,y,1+x},~\angles{x,y,1+y}.
\end{equation}
\end{pro}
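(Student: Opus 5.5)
Since $x,y\in I$, every linear polynomial $a+bx+cy$ reduces modulo the ideal to its constant-and-variable content. Any linear generator with zero constant term is already in $\angles{x,y}$. A generator containing $1$ is one of $1$, $1+x$, $1+y$, $1+x+y$. Thus every such ideal is $\angles{x,y}+\angles{G}$ with $G\subseteq\{1,1+x,1+y,1+x+y\}$.

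The first step is to collapse the possibilities. If $1\in G$ the ideal is improper, so we discard it. Next, $1+x+y = (1+x)+y\in\angles{x,y,1+x}$, and likewise $1+x+y\in\angles{x,y,1+y}$. Hence $1+x+y$ is redundant once $1+x$ or $1+y$ is present. The candidate ideals are therefore
\[
\angles{x,y},\ \angles{x,y,1+x},\ \angles{x,y,1+y},\ \angles{x,y,1+x,1+y},\ \angles{x,y,1+x+y}.
\]
The proof then has two parts: show that the first four are prime, and show that the fifth is not prime.

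For primality, I will describe each ideal by supports, using that $x,y\in I$ means $I$ contains every polynomial without constant term. Concretely, $I=\{f : 0\notin\Supp(f)\}\cup\{f: 0\in\Supp(f),\ \Supp(f)\cap E\neq\emptyset\}$ for a suitable set $E\subseteq\mathbb{N}^2\setminus\{0\}$ of exponents. For $\angles{x,y}$ the second part is empty, and primality holds because the constant term of $fg$ is the product of the constant terms. For $\angles{x,y,1+x}$, I expect $I$ to be the set of polynomials that either lack a constant term or contain a pure power $x^a$ with $a\geq1$. Membership is clear: $1+x^a=(1+x)(1+x+\dots)$ type expressions plus multiples of $x,y$ realize it. For the converse, every element is a sum of $r(1+x)$ plus terms divisible by $x$ or $y$. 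If $r$ has a constant term, the product contributes a monomial $x^a$ with $a\ge1$: take the largest power of $x$ in $r$'s pure-$x$ part, times $x$. Primality then reduces to a monoid statement. Suppose $fg$ has constant term and contains a pure power of $x$. Then $f$ and $g$ both have constants, and the pure $x$-part of $fg$ is the product of the pure $x$-parts of $f$ and $g$. If both of those were just $1$, then $fg$ would have no $x^a$ with $a\geq1$. So one factor contains such a term. The set $\{(a,0):a\ge1\}$ is prime in the sense of Definition \ref{definition: prime subset}, and this is the key point. The $y$ case is symmetric. For $\angles{x,y,1+x,1+y}$, the analogous description is "no constant, or contains some nonconstant monomial". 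This ideal is prime because a product $fg$ with a constant term and some nonconstant monomial forces $f$ or $g$ to have a nonconstant monomial.

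For the non-primality of $J=\angles{x,y,1+x+y}$, I will again first compute a membership criterion. Plausibly, $f\in J$ iff $f$ has no constant term, or contains both a pure $x$-power and a pure $y$-power, or contains a monomial divisible by $xy$ ... I must check which of these is correct. The cleanest witness should be $(1+x)(1+y)=1+x+y+xy\in J$, since it equals $(1+x+y)+xy$. Meanwhile $1+x\notin J$: every element of $J$ with a constant term comes from some $r(1+x+y)$ with $r$ having a constant term, hence contains $y$ times a pure-$y$-maximal term. So its support meets $\{(0,b):b\ge1\}$, but $1+x$ contains no such monomial. Symmetrically, $1+y\notin J$, so $J$ is not prime.

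The main obstacle is the precise support characterization of $\angles{x,y,1+x}$, specifically verifying that every polynomial with a constant and some $x^a$ lies in it. This requires writing $f=(1+x)h+(\text{terms divisible by }x\text{ or }y)$ using idempotence. I would take $h=1+x+\dots+x^{a-1}$, so that $(1+x)h=1+x+\dots+x^a$; the intermediate powers are absorbed because they are multiples of $x$ already in $I$. Adding the remaining terms of $f$, which are multiples of $x$ or $y$, then finishes the argument.
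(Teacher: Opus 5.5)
Your reduction to the five candidate ideals is correct, and so is your disproof for $\angles{x,y,1+x+y}$ via $(1+x)(1+y)=xy+(1+x+y)$; both match the paper. For primality of the other four, the paper simply cites Mincheva--Sakran with the prime subsets $\emptyset$, $\{(1,0)\}$, $\{(0,1)\}$, $\{(1,0),(0,1)\}$. The direct support computation you use instead rests on false membership criteria.

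In $\mathbb{B}[x,y]$ the support of a sum is the union of the supports, so nothing can be ``absorbed.'' Knowing $1+x+\dots+x^a\in I$ and $x,\dots,x^{a-1}\in I$ does not give $1+x^a\in I$; that would be subtractivity, which fails here. Indeed, if $f=px+qy+r(1+x)$ has a constant term, then $0\in\Supp(r)$, so $x\in\Supp(rx)\subseteq\Supp(f)$. Hence $1+x^2\notin\angles{x,y,1+x}$, even though it meets your criterion.

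Likewise, your description of $\angles{x,y,1+x,1+y}$ (no constant term, or some nonconstant monomial) is exactly the set $\mathbb{B}[x,y]\setminus\{1\}$ of all non-units. That set contains $1+xy$, which is not in $\angles{x,y,1+x,1+y}$. So what you actually prove prime are the strictly larger ideals $\angles{x,y,\{1+x^a\}_{a\geq 1}}$ and $\mathbb{B}[x,y]\setminus\{1\}$. Primality does not descend to a smaller ideal.

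The repair is short. The correct criteria are:
\begin{itemize}
\item $f\in\angles{x,y,1+x}$ if and only if $0\notin\Supp(f)$ or $(1,0)\in\Supp(f)$;
\item $f\in\angles{x,y,1+x,1+y}$ if and only if $0\notin\Supp(f)$ or $\Supp(f)\cap\{(1,0),(0,1)\}\neq\emptyset$.
\end{itemize}
For sufficiency, such an $f$ equals $(1+x)$ (or $(1+y)$) plus terms without constant term, and those terms lie in $\angles{x,y}$. Necessity is the argument just given.

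Primality then follows because $\Supp(fg)=\Supp(f)+\Supp(g)$ with no cancellation. If $fg$ has a constant term, then so do $f$ and $g$. The monomial $x$ (resp.\ $y$) in $\Supp(fg)$ can only arise as $1\cdot x$ (resp.\ $1\cdot y$), so it already lies in $\Supp(f)$ or $\Supp(g)$.

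With this correction your argument becomes a self-contained replacement for the citation to Mincheva--Sakran. The $\angles{x,y}$ case and the non-primality case need no change.
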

\begin{proof}
One can easily see by \cite[Theorem 5.1]{mincheva2025prime} that \eqref{eq: primes} are prime ideals with the prime subsets (by using the same notation as in \cite{mincheva2025prime}) 
\[
\emptyset,\quad \{(1,0),(0,1)\}, \quad \{(1,0)\}, \quad \{(0,1)\},
\]
respectively. Also, these are all of the possible cases which contain both $x$ and $y$ except $I=\angles{x,y,1+x+y}$ - in this case, $(1+x)(1+y)=yx+(1+x+y) \in I$, but $(1+x) \not \in I$ and $(1+y) \not \in I$, showing that $I$ is not prime. 

Note that $x+y$ is always in the ideal as we assume that $x,y$ are in the ideal. This proves the proposition. 
\end{proof}

Our classification relies on a detailed study of membership criteria for prime ideals. As an illustration of this approach, we present the following lemma. In the following lemma, instead of using the vector notation for support, we will use the monomial notation to ease the notation. For instance, for $f=1+y+xy$, instead of $\Supp(f)=\{(0,0),(0,1),(1,1)\}$, we simply write $\Supp(f)=\{1,y,xy\}$. 

\begin{lem}\label{lem: x,1+x,x+y}
Let $I=\angles{x,1+x,x+y}\subseteq \mathbb{B}[x,y]$. Then $f \in I$ if and only if the following conditions hold: 
\begin{enumerate}
    \item 
If $1 \in \Supp(f)$, then $x \in \Supp(f)$. 
    \item 
If $y^j \in \Supp(f)$ for $j\geq 1$, then $xy^j \in \Supp(f)$ or $xy^{j-1} \in \Supp(f)$.  
\end{enumerate}
\end{lem}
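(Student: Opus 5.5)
We prove both directions by reducing membership in $I$ to a statement about supports. The key observation is that in $\mathbb{B}[x,y]$, the ideal $\angles{x,1+x,x+y}$ consists exactly of finite sums of monomial multiples of the generators, since idempotence lets every $r_ia_i$ be expanded into terms $m\cdot g$ with $m$ a monomial and $g\in\{x,1+x,x+y\}$. Hence $f\in I$ if and only if $f$ (that is, its support) is a union of supports of polynomials of the form $x^ay^b\cdot x$, $x^ay^b(1+x)$, and $x^ay^b(x+y)$. The proof then amounts to checking that a support set is such a union exactly when conditions (1) and (2) hold.

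For the forward direction, we verify that each generator multiple satisfies (1) and (2), and that both conditions are preserved under unions of supports. The second point is immediate, because each condition has the form ``if a monomial of a certain type lies in the support, then a specified companion monomial lies in the support.'' For the first point, consider each generator multiple in turn.
\begin{itemize}
\item A multiple $x^{a+1}y^b$ contains no monomial of the form $1$ or $y^j$, so both conditions hold vacuously.
\item A multiple $x^ay^b(1+x)$ contains $y^b$ only when $a=0$. If $b=0$, the support contains $1$ together with $x$, giving (1). If $b\geq 1$, the support contains $y^b$ together with $xy^b$, giving (2).
\item A multiple $x^ay^b(x+y)=x^{a+1}y^b+x^ay^{b+1}$ contains a pure power of $y$ only when $a=0$. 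That power is $y^{b+1}$, and its companion $xy^b$ is also in the support, giving (2). This multiple never contains the monomial $1$.
\end{itemize}

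For the converse, suppose $f$ satisfies (1) and (2). We cover each monomial of $\Supp(f)$ by a generator multiple whose support lies inside $\Supp(f)$; summing these covers yields $f$ by idempotence.
\begin{itemize}
\item A monomial $x^ay^b$ with $a\geq 1$ equals $x\cdot x^{a-1}y^b$, so it is covered by a multiple of the generator $x$.
\item If $1\in\Supp(f)$, then (1) gives $x\in\Supp(f)$, and the pair is covered by $1+x$.
\item If $y^j\in\Supp(f)$ with $j\geq 1$, then (2) gives two cases. If $xy^j\in\Supp(f)$, then $y^j(1+x)$ has support inside $\Supp(f)$. If $xy^{j-1}\in\Supp(f)$, then $y^{j-1}(x+y)$ does.
\end{itemize}
Every monomial of $\Supp(f)$ is now covered, so $f$ is the sum of these generator multiples and therefore $f\in I$.

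The only step needing care is the opening reduction, namely that $I$ is exactly the set of finite sums of monomial multiples of the generators. This follows from the description of $\angles{A}$ given earlier together with distributivity and idempotence: each $r_ia_i$ expands into monomial multiples, and repeated terms collapse. The rest is routine case checking, and the main pitfall is simply remembering the $b=0$ case, where condition (1) rather than (2) is the one that applies.
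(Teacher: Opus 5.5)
Your proposal is correct and follows essentially the same route as the paper: the forward direction is support bookkeeping on a representation of $f$ by the generators (using that sums in $\mathbb{B}[x,y]$ take unions of supports), and the converse writes $f$ explicitly as a sum of multiples of $x$, $1+x$, $x+y$ whose supports lie in $\Supp(f)$. The paper's decomposition $f=g+(1+x)\alpha+(x+y)\beta'$ is just your monomial-by-monomial cover grouped together, though your formulation handles the constant term uniformly rather than as a separate case.
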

\begin{proof}
First, suppose that $f \in I$. Then we can write $f=ax+b(1+x)+c(x+y)$ for some $a,b,c \in \mathbb{B}[x,y]$. So, we can rewrite:
\[
f=(a+b+c)x+b+cy.
\]
If $1 \in \Supp(f)$, then $1 \in \Supp(b)$ and hence $x\in \Supp(f)$. Likewise, if $y^j$ for $j\geq 1$ is in $\Supp(f)$, then either $y^j \in \Supp(b)$ (in which case $xy^j \in \Supp(f)$) or $y^j \in \Supp(cy)$ (in which case $xy^{j-1} \in \Supp(c(x+y))$. 

Conversely, suppose that $f$ satisfies the given condition. If $1 \not\in \Supp(f)$, then we can write $f=g+h$, where $g$ contains all monomials divisible by $x$ and $h=h(y)$, a polynomial in $y$, such that $1 \not \in \Supp(h)$. By the given condition, for each $y^i \in \Supp(h)$ either $xy^i \in \Supp(f)$ or $xy^{i-1} \in \Supp(f)$. We let $\alpha$ be the sum of monomials $y^i$ of $h$ such that $xy^i \in \Supp(f)$, and $\beta$ be the sum of monomials $y^j$ of $h$ such that $xy^{j-1} \in \Supp(f)$. Let $\beta'$ the be sum of monomials of $\beta$ obtained by reducing the degree of the monomials of $\beta$ by one. Then we have $h=\alpha+\beta$. Note that $\Supp(\alpha)$ and $\Supp(\beta)$ do not have to be disjoint, but since $\mathbb{B}[x,y]$ is idempotent the repetition does not change the sum $\alpha+\beta$. Now, we have
\[
f=g+h = g+\alpha +\beta +x\alpha +x\beta'=g+(1+x)\alpha+(x+y)\beta'. 
\]
It follows that $f \in I$. 

Now, suppose that $1 \in \Supp(f)$. Then, from the given condition, we have $x \in \Supp(f)$. Let $f=1+f'$, where $1 \not \in \Supp(f')$. We further write $f'=g'+h'$, where $g'$ contains all monomials divisible by $x$ and $h'$ is not divisible by $x$. We have
\[
f=1+f'= 1+x+f'=(1+x) + g' + h'.
\]
Now, since $h'$ is not divisible by $x$ and $1 \not \in \Supp(h')$, we may write $h'=h'(y)$, a polynomial in $y$ with no constant term. Now, we may apply the same trick as above and write $h'=\alpha+\beta$, based on whether $xy^i \in \Supp(f)$ or $xy^{i-1} \in \Supp(f)$. So, we have
\[
f=1+g'+\alpha+\beta=(1+x) +g' +\alpha+\beta +x\alpha +x\beta' = (1+x) +g' +(1+x)\alpha +(x+y)\beta', 
\]
showing that $f \in I$.
showing that $f \in I$.
\end{proof}

\begin{pro}\label{proposition: x case}
The only prime ideal on $\mathbb{B}[x,y]$, generated by linear polynomials, containing $x$ but not $y$ is $\angles{x}$.
\end{pro}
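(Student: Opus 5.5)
The plan is to list every candidate ideal and rule out all of them except $\angles{x}$. The linear polynomials over $\mathbb{B}$ in $x,y$ are the nonempty sums of monomials from $\{1,x,y\}$. Let $I$ be a proper ideal generated by such polynomials with $x\in I$ and $y\notin I$. Then $1$ and $y$ are not among the generators. Since $x\in I$, adjoining $x$ changes nothing, so
\[
I=\angles{x,\ \Gamma},\qquad \Gamma\subseteq\{1+x,\ 1+y,\ x+y,\ 1+x+y\}.
\]
For $\Gamma=\emptyset$ we get $\angles{x}$, which is prime: it is the kernel of $x\mapsto 0$, and $\mathbb{B}[y]$ has no zero divisors because supports add under multiplication. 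For each nonempty $\Gamma$ I must either show $y\in I$ or produce $a,b\notin I$ with $ab\in I$.

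\emph{Step 1: explicit membership criteria.} For each $\Gamma$ I would describe $I$ by support conditions, in the style of Lemma \ref{lem: x,1+x,x+y}. Write $f=a x+\sum_{g\in\Gamma} c_g\, g$ and read off which monomials are forced into $\Supp(f)$. A cheap tool for this is the homomorphism $\varphi:\mathbb{B}[x,y]\to\mathbb{B}[y]$, $x\mapsto 0$. If $f\in I$, then $\varphi(f)\in\varphi(I)$, and $\varphi(I)$ is the ideal of $\mathbb{B}[y]$ generated by $\varphi(\Gamma)$. Moreover, a polynomial in $y$ alone lies in $I$ only if it lies in $\varphi(I)$.

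\emph{Step 2: the cases without $1+x$ and $x+y$.} Here $\Gamma\subseteq\{1+y,1+x+y\}$ and $\varphi(I)=\angles{1+y}$. I would use the identity of Observation \ref{observation: basics from Kalina's}:
\[
(1+y^2+y^3)(1+y+y^3)=(1+y)(1+y^2)(1+y^3).
\]
A direct coefficient check shows that $1+y^2+y^3$ and $1+y+y^3$ are not multiples of $1+y$ in $\mathbb{B}[y]$, so neither lies in $I$. If $1+y\in\Gamma$, the right side lies in $I$ and we are done. If only $1+x+y$ is present, I would instead use $(1+x+y)(1+y^2)(1+y^3)\in I$. The aim is to rewrite it as a product $a\cdot b$ where $a,b$ are obtained from $1+y^2+y^3$ and $1+y+y^3$ by adding suitable $x$-multiples. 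Their $x$-free parts are unchanged, so $\varphi$ still shows $a,b\notin I$.

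\emph{Step 3: the cases containing $1+x$ or $x+y$.} Here $\varphi(I)$ contains $1$ or $y$, so $\varphi$ gives no information, and this is where I expect the real work. I would reduce to the generators actually present. For instance, for $\angles{x,1+x,x+y}$ Lemma \ref{lem: x,1+x,x+y} already gives a complete membership test. For $\angles{x,x+y}$ the analogous test should be: $1\notin\Supp(f)$, and $y^j\in\Supp(f)$ forces $xy^{j-1}\in\Supp(f)$. For $\angles{x,1+x}$ the test should be: $1\in\Supp(f)$ forces $x\in\Supp(f)$, and no pure power $y^j$ with $j\ge1$ occurs. I would then search among low-degree products for a violation of primality. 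Typical candidates are $a=1+y^k$ against a factor supplying the missing $x$-monomials, or for $\angles{x,1+x}$ the pair $a=1+y$, $b=1+x+y$.

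The main obstacle is finding these witnesses uniformly across the roughly $2^4$ subsets $\Gamma$. Supports of products do not cancel, so one needs products whose $x$-free terms are all "covered" by the required $x$-terms even though neither factor is. My first attempt would be to exploit the identity of Observation \ref{observation: basics from Kalina's}, or the identity $f(1+y)^N=(1+y)^{2N}$ from the same observation, after multiplying by $1+x$ or $x+y$. If that fails for a particular $\Gamma$, I would argue via contraction instead. The contraction $P\cap\mathbb{B}[y]$ of a prime $P\supseteq I$ is a prime of $\mathbb{B}[y]$ not containing $y$. By the argument of Observation \ref{observation: basics from Kalina's}, it either contains $1+y$ or is zero. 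Combining this with the membership test for $I$ would give a contradiction in each case.
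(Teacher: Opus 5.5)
\textbf{Verdict: the proposal has a genuine gap.} Your reduction to $I=\angles{x,\Gamma}$ with $\Gamma\subseteq\{1+x,1+y,x+y,1+x+y\}$ is correct and matches the paper's enumeration. So are the primality of $\angles{x}$ and your use of $(1+y^2+y^3)(1+y+y^3)=(1+y)(1+y^2)(1+y^3)$ when $1+y\in\Gamma\subseteq\{1+y,1+x+y\}$.

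The gap is that for every ideal containing $1+x$ or $x+y$ you give no witness at all, and these cases are the whole content of the proposition. The paper settles each one with an explicit identity:
\begin{itemize}
\item $(1+y+xy)(x+y)=x(1+xy)+(y+y^2)(1+x)$ for $\angles{x,1+x}$ and $\angles{x,1+x,1+x+y}$;
\item $(x+y+y^2)(1+x+y)=(x+y^2)x+(1+y+y^2)(x+y)$ for $\angles{x,x+y}$;
\item $(y^2+x)(y^3+xy^3+y)=(1+x)y^5+(1+x)y^3+x(y^3+xy^3+y)$ for $\angles{x,1+x,x+y}$, with non-membership of the factors read off from Lemma \ref{lem: x,1+x,x+y};
\item $(1+x)(1+y)=xy+(1+x+y)$ for $\angles{x,1+x+y}$ and $\angles{x,x+y,1+x+y}$.
\end{itemize}
For $\angles{x,1+x+y}$, your plan of perturbing Kalina's factors by $x$-multiples can be made to work, but you leave it unexecuted.

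Some of what you do sketch is wrong.
\begin{itemize}
\item Your membership test for $\angles{x,1+x}$ fails, since $y(1+x)=y+xy$ lies in the ideal. The correct test is that $y^j\in\Supp(f)$ forces $xy^j\in\Supp(f)$ for every $j\geq 0$.
\item Your suggested pair $1+y$, $1+x+y$ does not work: $(1+y)(1+x+y)=1+x+y+xy+y^2$ contains $y^2$ but not $xy^2$. Your other heuristic can succeed, e.g.\ $(1+y)(1+x+y+y^2+xy^2)=(1+x)(1+y+y^2+y^3)$, but only once the correct test is used.
\item The ideals containing $1+y$ together with $1+x$ or $x+y$ are not hard and do not belong in Step 3. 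Any nonzero $x$-free multiplier of $1+x$, $x+y$ or $1+x+y$ forces an $x$-monomial into the support, and nothing cancels. So the pure $y$-polynomials in such an $I$ are exactly the multiples of $1+y$ in $\mathbb{B}[y]$, and Kalina's identity applies verbatim.
\end{itemize}

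Your fallback via contraction cannot close the gap. For $\angles{x,1+x}$, $\angles{x,x+y}$, $\angles{x,1+x,x+y}$ and the other ideals with $1+y\notin\Gamma$, one has $I\cap\mathbb{B}[y]=\{0\}$. That is a prime of $\mathbb{B}[y]$ not containing $y$, so the dichotomy you invoke gives no contradiction. In fact no argument using only the contraction can work. Take the set of $F\in\mathbb{B}[x,y]$ that are not nonzero polynomials in $y$ alone, i.e.\ $\mathfrak P_{\max}(\{0\})$ from Theorem \ref{thm: boundary-fiber}. It is a prime ideal containing $x$, $1+x$, $x+y$ and $1+x+y$ but not $y$, and its contraction is zero. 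So the failure of primality for these linear ideals is invisible to $\mathbb{B}[y]$. It has to be exhibited by explicit products $ab\in I$ with $a,b\notin I$, checked against correct support-based membership criteria, exactly as the paper does case by case.
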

\begin{proof}
\underline{Case 1: $\angles{x}$}; this is clearly a prime ideal.

\underline{Case 2: $\angles{x, 1+x}$}; this is not a prime ideal. One may notice the following:
\begin{equation}\label{eq: case2}
(1+y+xy)(x+y)=x(1+xy)+(y+y^2)(1+x) \in \angles{x,1+x}.
\end{equation}
But, clearly $1+y+xy$ and $x+y$ are not elements of $\angles{x,1+x}$.

\underline{Case 3: $\angles{x, 1+y}$}; this is not prime by the exact same reason as in \cite{mincheva2025prime}. To be precise, 
\begin{equation}\label{eq: 1+y}
(1+y+y^3)(1+y^2+y^3)=(1+y)(1+y^2)(1+y^3),
\end{equation}
but $(1+y+y^2)$ and $(1+y^2+y^3)$ are not in $\angles{x,1+y}$.

\underline{Case 4: $\angles{x, x+y}$}; this is not a prime ideal. To be precise, we have \begin{equation}
(x+y+y^2)(x+y+1)=(x+y^2)x +(y^2+y+1)(x+y) \in \angles{x,x+y}.
\end{equation}
But neither $(x + y + y^{2})$ nor $(x + y+1)$ is in the ideal.

\underline{Case 5: $\angles{x, 1+x+y}$}; this is not a prime ideal. To be precise, we have
\begin{equation}\label{eq: 1+x+y}
(1+x)(1+y) = yx +(1+x+y) \in \angles{x,1+x+y}
\end{equation}
but $1+x$ and $1+y$ are not in $\angles{x,1+x+y}$.

\underline{Case 6: $\angles{x, 1+x, 1+y}$}; this is not a prime ideal. The same equation as in \eqref{eq: 1+y} can be used to prove this since $1+y+y^3$ and $1+y^2+y^3$ are not in $\angles{x, 1+x, 1+y}$.

\underline{Case 7: $\angles{x, 1+x, x+y}$}; this is not a prime ideal. In fact, it is clear that $y^2+x$ and $y^3+xy^3+y$ are not in $\angles{x,1+x,x+y}$. But, we have
\begin{equation}\label{eq: case 7}
(y^2+x)(y^3+xy^3+y)=(1+x)y^5+(1+x)y^3+x(y^3+xy^3+y) \in \angles{x,1+x,x+y},
\end{equation}
showing that $\angles{x,1+x,x+y}$ is not prime.

\underline{Case 8: $\angles{x, 1+x, 1+x+y}$}; this is not a prime ideal. We may consider the same equation as in \eqref{eq: case2}. Then $x(1+xy)+(y+y^2)(1+x) \in \angles{x,1+x,1+x+y}$, but clearly $1+y+xy$ and $x+y$ are not in $\angles{x,1+x,1+x+y}$.

\underline{Case 9: $\angles{x, 1+y, x+y}$}; this is not a prime ideal. In fact, we can use the same equation \eqref{eq: 1+y} as in Case 3 since $1+y+y^3$ and $1+y^2+y^3$ are not in $\angles{x,1+y,x+y}$.

\underline{Case 10: $\angles{x, 1+y, 1+x+y}$}; this is same as Case 3 since $1+x+y=(1+y)+x$. 

\underline{Case 11: $\angles{x, x+y, 1+x+y}$}; this is not a prime ideal by \eqref{eq: 1+x+y}.

\underline{Case 12: $\angles{x, 1+x, 1+y, x+y}$}; this is not a prime ideal. In fact, note that we can use the same identity \eqref{eq: case 7} in Case 7, i.e, we have
\[
(y^2+x)(y^3+xy^3+y) \in \angles{x,1+x,1+y,x+y},
\]
but it is clear that $y^2+x$ and $y^3+xy^3+y$ are not in $\angles{x,1+x,1+y,x+y}$.

\underline{Case 13: $\angles{x, 1+x, 1+y, 1+x+y}$}; this is same as Case 6 since $1+x+y=(1+x)+(1+y)$.

\underline{Case 14: $\angles{x, 1+x, x+y, 1+x+y}$}; this is same as Case 7 since $1+x+y=(1+x)+(x+y)$. 

\underline{Case 15: $\angles{x, 1+y, x+y, 1+x+y}$}; this is same as Case 14. 

\underline{Case 16: $\angles{x, 1+x, 1+y, x+y, 1+x+y}$}; this is same as Case 12 since $1+x+y=(1+x)+(x+y)$.
\end{proof}

\begin{pro}
The only prime ideal on $\mathbb{B}[x,y]$, generated by linear polynomials, containing $y$ but not $x$ is $\angles{y}$.
\end{pro}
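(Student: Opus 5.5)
The plan is to deduce this from Proposition \ref{proposition: x case} by symmetry. Let $\sigma:\mathbb{B}[x,y]\to\mathbb{B}[x,y]$ be the semiring automorphism that swaps the variables, $\sigma(x)=y$ and $\sigma(y)=x$. It is an involutive isomorphism of semirings, so it carries ideals to ideals and prime ideals to prime ideals. The reason is that $ab\in\sigma(I)$ if and only if $\sigma(a)\sigma(b)\in I$, and primality transfers through this equivalence.

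Next, $\sigma$ maps the set of linear polynomials $\{x,y,x+y,1+x,1+y,1+x+y\}$ bijectively to itself, and it fixes $1$. Hence $I$ is generated by linear polynomials if and only if $\sigma(I)$ is, and $\sigma(\angles{S})=\angles{\sigma(S)}$ for any generating set $S$.

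Now let $I$ be a prime ideal generated by linear polynomials with $y\in I$ and $x\notin I$. Then $\sigma(I)$ is a prime ideal generated by linear polynomials with $x\in\sigma(I)$ and $y\notin\sigma(I)$. By Proposition \ref{proposition: x case}, $\sigma(I)=\angles{x}$, and therefore $I=\sigma(\angles{x})=\angles{y}$.

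For the converse, $\angles{y}$ is prime because it is the image of the prime ideal $\angles{x}$ under $\sigma$. One can also check it directly: $f\in\angles{y}$ exactly when every monomial of $f$ is divisible by $y$, and if neither $f$ nor $g$ lies in $\angles{y}$, then the product of their lowest-$x$-degree $y$-free monomials is a $y$-free monomial of $fg$, since there is no cancellation in $\mathbb{B}[x,y]$.

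No step is a real obstacle. The only point needing care is that the case list in Proposition \ref{proposition: x case} is symmetric in its hypotheses: it covers every set of linear generators containing $x$ but not $y$, and the conclusion there does not depend on which generating set was chosen.
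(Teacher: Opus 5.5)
Your proposal is correct and follows the same approach as the paper, which proves this in one line as the mirror image of Proposition~\ref{proposition: x case}. Your variable-swapping automorphism $\sigma$ makes that symmetry argument explicit, including that it preserves primality and linear generating sets.
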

\begin{proof}
  This is symmetrical to Proposition \ref{proposition: x case}.
\end{proof}

\begin{pro}
 If $I$ is an ideal of $\mathbb{B}[x,y]$ generated by linear polynomials and $x, y \not \in I$, then $I$ is not a prime ideal. 
\end{pro}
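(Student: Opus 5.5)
The plan is a reduction to a small list of generating sets, followed by an explicit witness of non-primality for each, in the style of the proof of Proposition \ref{proposition: x case}. The linear polynomials of $\mathbb{B}[x,y]$ are $1,x,y,1+x,1+y,x+y,1+x+y$. If $1$ is a generator, $I$ is the whole ring and hence not prime, so we may discard it.

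\textbf{Step 1: $x$ and $y$ are not generators.} The key observation is that addition in $\mathbb{B}[x,y]$ has no cancellation. Hence if $f=\sum r_ig_i$, then $\Supp(f)=\bigcup_i \Supp(r_ig_i)$. If $x\in I$, every nonzero summand $r_ig_i$ would have support $\{x\}$, forcing $g_i=x$. So under the hypothesis $x,y\notin I$, neither $x$ nor $y$ is a generator, and $I$ is generated by a nonempty subset of $\{1+x,\,1+y,\,x+y,\,1+x+y\}$.

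\textbf{Step 2: a restriction principle for non-membership.} The same support argument gives the following. If $h(y)\in\mathbb{B}[y]$ lies in $I$, then every summand $r_ig_i$ involves only powers of $y$. This forces $g_i=1+y$ and $r_i\in\mathbb{B}[y]$. So $h\in\langle 1+y\rangle_{\mathbb{B}[y]}$ when $1+y$ is a generator, and $h=0$ otherwise; the symmetric statement holds for $x$. With this principle we split into cases.

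\emph{Case (a): $1+y$ is a generator.} Use \eqref{eq: 1+y}, namely $(1+y+y^3)(1+y^2+y^3)=(1+y)(1+y^2)(1+y^3)\in I$. Neither factor lies in $\langle 1+y\rangle_{\mathbb{B}[y]}$: writing $(1+y)g$, a top monomial $y^3$ with $y^2$ absent forces $y^4$ into the support. Hence neither factor lies in $I$. The case where $1+x$ is a generator is symmetric.

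\emph{Case (b): the generators are contained in $\{x+y,1+x+y\}$ and $1+x+y$ is among them.} Use $(1+x)(1+y)=xy+(1+x+y)\in I$. By the restriction principle, $1+x\notin I$ and $1+y\notin I$, because neither $1+x$ nor $1+y$ is a generator here.

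\emph{Case (c): $I=\langle x+y\rangle$.} Homogenize the one-variable identity of Observation \ref{observation: basics from Kalina's}(1) via $t=y/x$. This gives
\[
(x^3+xy^2+y^3)(x^3+x^2y+y^3)=(x+y)^6\in I.
\]
To see that $x^3+x^2y+y^3\notin I$, suppose it equals $(x+y)g$. Since $I$ is homogeneous, we may take $g$ homogeneous of degree $2$. Then $y^3$ forces $y^2\in\Supp(g)$, which puts $xy^2$ into the support of the product, a contradiction. The other factor is symmetric.

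I expect the main obstacle to be the non-membership verifications, since membership in semiring ideals is not detected by reduction. Steps 1 and 2 are designed to avoid ad hoc support bookkeeping like that of Lemma \ref{lem: x,1+x,x+y}. The one spot needing care is Case (c): I must check that the homogenized identity really holds in $\mathbb{B}[x,y]$. It does, because homogenization is multiplicative and degree-compatible, and $(1+t)^6$ has full support in degrees $0$ through $6$.
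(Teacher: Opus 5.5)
Your Case (b) fails, and it is the only real gap. The identity $(1+x)(1+y)=xy+(1+x+y)$ is true, but it puts $(1+x)(1+y)$ in $I$ only when $xy\in I$. That is why it works in the paper's proposition on ideals containing $x$ (Case 5 there, where $x\in I$).

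Here $x,y\notin I$, and in fact $1+x+y+xy\notin I$ for both $I=\langle 1+x+y\rangle$ and $I=\langle x+y,1+x+y\rangle$. By your own support principle, the monomial $xy$ would have to occur in some $m\,g_i$ with $m$ a monomial and $g_i$ a generator. Every such product containing $xy$ is one of $x^2+xy$, $xy+y^2$, $xy+x^2y+xy^2$, $x+x^2+xy$, $y+xy+y^2$. Each of these has a monomial outside $\{1,x,y,xy\}$. So the non-primality of these two ideals, which the paper treats separately (its Cases 4 and 10), is not established.

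To repair this, split Case (b) into two.
\begin{itemize}
\item For $\langle 1+x+y\rangle$, use $(x+y+xy)(1+x+y)=(x+y)(1+x)(1+y)$. This is the paper's identity $xy(x+y+xy)(1+x+y)=(x+y)(x+xy)(y+xy)$ with $xy$ cancelled. The left side lies in $I$. None of $x+y$, $1+x$, $1+y$ does, because every nonzero $(1+x+y)g$ has at least three monomials.
\item That witness fails for $\langle x+y,1+x+y\rangle$, since $x+y\in I$. There use $(1+y)(x^2+xy+y)=(x+y+xy)(x+y)+(y+xy)(1+x+y)$. Your restriction principle gives $1+y\notin I$. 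Also $x^2+xy+y\notin I$: the only monomial multiples of generators containing $y$ are $x+y$, $1+x+y$ and $y+xy+y^2$, and each has a monomial outside $\Supp(x^2+xy+y)$.
\end{itemize}

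The rest of your plan is sound, and it is more economical than the paper's fifteen-case list. The restriction principle makes rigorous the non-membership claims the paper asserts by analogy. Your Case (c), via the homogenized one-variable identity, is a valid alternative to the paper's witness for $\langle x+y\rangle$.

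One small fix in Case (a): your top-degree argument only handles $1+y+y^3$. For $1+y^2+y^3$, argue from the bottom: the constant term forces $1\in\Supp(g)$, hence $y\in\Supp((1+y)g)$, but $y\notin\Supp(1+y^2+y^3)$.

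Finally, Step 1 proves the converse of what you need. That $x$ and $y$ are not generators is immediate from $x,y\notin I$, so the step is harmless but unnecessary.
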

\begin{proof}
\underline{Case 1: $\angles{1+x}$}; this is not prime by the exact same reason as in \cite{mincheva2025prime}. To be precise, 
\begin{equation}\label{eq: 1+x}
(1+x+x^3)(1+x^2+x^3)=(1+x)(1+x^2)(1+x^3),
\end{equation}
but $(1+x+x^2)$ and $(1+x^2+x^3)$ are not in $\angles{1+x}$.

\underline{Case 2: $\angles{1+y}$}; this is not prime by the same argument as in Case 1. 

\underline{Case 3: $\angles{x+y}$}; this is not a prime ideal. One may notice the following identity:
\begin{equation}\label{eq: alpha}
\alpha=xy(x+y+xy)(1+x+y)=(x+y)(x+xy)(y+xy).
\end{equation}
So, we have $\alpha \in \angles{x+y}$. But, clearly $xy, x+y+xy, 1+x+y$ are not in $\angles{x+y}$.\footnote{One can also argue that if $\angles{x+y}$ is a prime ideal, it's dehomogenization $\angles{1+x}$ is a prime ideal by Proposition \ref{pro: dehomogenization prime}, but $\angles{1+x}$ is not prime as shown in Case 1.} 

\underline{Case 4: $\angles{1+x+y}$}; this is not prime. In fact, from \eqref{eq: alpha}, we have that $\alpha \in \angles{1+x+y}$, but it is clear that $x+y, x+xy, y+xy$ are not in $\angles{1+x+y}$ since any nonzero element in $\angles{1+x+y}$ should have at least three monomials. 

\underline{Case 5: $\angles{1+x, 1+y}$}; this is not prime. In fact, we may look at \eqref{eq: 1+x}. Then we have $(1+x+x^3)(1+x^2+x^3) \in \angles{1+x,1+y}$, but clearly $(1+x+x^2)$ and $(1+x^2+x^3)$ are not in $\angles{1+x,1+y}$. 

\underline{Case 6: $\angles{1+x, x+y}$}; a similar argument as in Case 5 shows that $\angles{1+x,x+y}$ is not prime. 

\underline{Case 7: $\angles{1+x, 1+x+y}$}; a similar argument as in Case 5 shows that $\angles{1+x,x+y}$ is not prime.

\underline{Case 8: $\angles{1+y, x+y}$}; this is same as Case 6. 

\underline{Case 9: $\angles{1+y, 1+x+y}$}; this is same as Case 7. 

\underline{Case 10: $\angles{x+y, 1+x+y}$}; this is not prime. In fact, we have
\begin{equation}
(1+y)(x^2+xy+y) = (x+y+xy)(x+y) + (y+xy)(1+x+y) \in \angles{x+y,1+x+y}.
\end{equation}
It is clear that $1+y, x^2+xy+y \not \in \angles{x+y,1+x+y}$. 

\underline{Case 11: $\angles{1+x, 1+y, x+y}$}; a similar argument as in Case 5 shows that $\angles{1+x,x+y}$ is not prime.

\underline{Case 12: $\angles{1+x, 1+y, 1+x+y}$}; this is same as Case 5 since $1+x+y=(1+x)+(1+y)$. 

\underline{Case 13: $\angles{1+x, x+y, 1+x+y}$}; this is same as Case 6 since $1+x+y = (1+x)+(x+y)$.

\underline{Case 14: $\angles{1+y, x+y, 1+x+y}$}; this is same as Case 8 since $1+x+y = (1+y)+(x+y)$.

\underline{Case 15: $\angles{1+x, 1+y, x+y, 1+x+y}$}; this is same as Case 11 since $1+x+y = (1+x)+(1+y)$. 
\end{proof}

We conclude this section with the following conjecture.

\begin{conjecture}
Let $I$ be an ideal of $\mathbb{B}[x_1,\dots,x_n]$, generated by linear polynomials. If $I$ does not contain all of $x_1,\dots,x_n$, then $I$ is a prime ideal only when it is generated by a subset of variables $\{x_1,\dots,x_n\}$.
\end{conjecture}

\section{Decomposition of $\Spec (\mathbb{B}[x,y])$}\label{section: decomposition}

We use standard notation. For a semiring $R$, we denote by $\Spec R$ the set of prime ideals of $R$, equipped with the Zariski topology. For an ideal $I\subseteq R$, we denote by $V(I)$ the subset of $\Spec R$ consisting of the prime ideals containing $I$. We write $V(x)=V(\angles{x})$. Similarly, we denote by $D(x)$ the subset of $\Spec R$ consisting of the prime ideals of $R$ that do \emph{not} contain $x$. For further details, see \cite{jun2017vcech}, \cite{JMT20}, and the references therein.

We begin with the following well-known lemma. We include the proof for completeness.

\begin{lem}\label{lemma: extension and contraction}
Let $R$ be a semiring and let $S\subseteq R$ be a multiplicative subset. Extension and contraction give a bijection between prime ideals of $R$ disjoint from $S$ and prime ideals of $S^{-1}R$. Explicitly, if $P\subseteq R$ is prime and $P\cap S=\emptyset$, then $S^{-1}P$ is prime in $S^{-1}R$; if $\mathfrak q\subseteq S^{-1}R$ is prime, then its contraction $\mathfrak q^c=\{r\in R:r/1\in \mathfrak q\}$ is prime and disjoint from $S$; and these operations are inverse to each other.  
\end{lem}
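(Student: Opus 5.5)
We follow the classical argument, checking at each step that no subtraction is needed. Throughout, $S^{-1}P:=\{a/s : a\in P,\ s\in S\}$. Because an element of $S^{-1}R$ may have several representatives, membership in $S^{-1}P$ is not obviously determined by the chosen numerator. So the first step is to prove a saturation statement: if $P$ is prime with $P\cap S=\emptyset$, then $a/s\in S^{-1}P$ if and only if $a\in P$.

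For the saturation statement, suppose $a/s=b/t$ with $b\in P$. Then there is $u\in S$ with $atu=bsu$. The right-hand side lies in $P$, so $atu\in P$. Since $P$ is prime and $t,u\notin P$, we get $a\in P$. The computation uses only multiplicativity and primality, so it holds verbatim in the semiring setting.

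With saturation available, we verify the remaining claims in order.

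\begin{enumerate}
\item $S^{-1}P$ is an ideal. Closure under $\frac{r}{v}\cdot\frac{a}{s}$ is immediate. For sums, write $\frac{a}{s}+\frac{b}{t}=\frac{at+bs}{st}$, and note that $at+bs\in P$ by the semiring ideal axiom.
\item $S^{-1}P$ is proper. Otherwise $1/1\in S^{-1}P$, and saturation gives $1\in P$, a contradiction.
\item $S^{-1}P$ is prime. If $\frac{a}{s}\frac{b}{t}\in S^{-1}P$, then $ab\in P$ by saturation, so $a\in P$ or $b\in P$.
\item Contraction preserves primes. Let $\mathfrak q\subseteq S^{-1}R$ be prime. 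Then $\mathfrak q^c$ is the preimage of $\mathfrak q$ under the homomorphism $r\mapsto r/1$, so it is an ideal. It is prime because $(ab)/1=(a/1)(b/1)$. It is disjoint from $S$, since $s/1\in\mathfrak q$ would make $1/1=(1/s)(s/1)\in\mathfrak q$, forcing $\mathfrak q$ to be improper.
\item The operations are mutually inverse. By saturation, $(S^{-1}P)^c=P$. For the other composite, $S^{-1}(\mathfrak q^c)\subseteq \mathfrak q$ is clear. Conversely, if $a/s\in\mathfrak q$, then $a/1=(s/1)(a/s)\in\mathfrak q$, so $a\in\mathfrak q^c$ and $a/s\in S^{-1}(\mathfrak q^c)$.
\end{enumerate}

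The main obstacle is the saturation step. It is the only place where one might worry that the classical proof subtracts: the classical proof rewrites $u(at-bs)=0$, and we must instead use the equational form $atu=bsu$ directly. This works because membership of $bsu$ in $P$ transfers to $atu$ through equality itself, not through differences. We should also note, as a small point, that the ideal property of $S^{-1}P$ requires $P$ to be closed under addition, which holds by the paper's definition of ideal.
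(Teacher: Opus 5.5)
Your proposal is correct and follows essentially the same argument as the paper. The paper's steps ``there exists $u\in S$ such that $uab\in P$'' and ``$(S^{-1}P)^c=P$ follows from the defining membership criterion'' are exactly your saturation statement ($a/s\in S^{-1}P\iff a\in P$), which you isolate and prove explicitly, together with the ideal property and properness of $S^{-1}P$ that the paper asserts without comment.
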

\begin{proof}
If $P\subseteq R$ is prime and disjoint from $S$, then $S^{-1}P$ is proper. If $(a/s)(b/t)=ab/(st)\in S^{-1}P$, then there exists $u\in S$ such that $uab\in P$. Since $u\notin P$ and $P$ is prime, we get $ab\in P$, hence $a\in P$ or $b\in P$, so $a/s\in S^{-1}P$ or $b/t\in S^{-1}P$. Thus $S^{-1}P$ is prime. Conversely, if $\mathfrak q\subseteq S^{-1}R$ is prime, then its contraction $\mathfrak q^c$ is prime because $ab\in\mathfrak q^c$ implies $(a/1)(b/1)\in\mathfrak q$, hence $a/1\in\mathfrak q$ or $b/1\in\mathfrak q$. Also $\mathfrak q^c\cap S=\emptyset$, since every element of $S$ becomes a unit in $S^{-1}R$ and no proper ideal contains a unit. Finally, if $J\subseteq S^{-1}R$ is an ideal and $a/s\in J$, then $a/1=(s/1)(a/s)\in J$, so $a\in J^c$ and hence $a/s\in S^{-1}J^c$; this shows $J=S^{-1}J^c$. The remaining equality $(S^{-1}P)^c=P$ follows from the defining membership criterion and the assumption $P\cap S=\emptyset$.
\end{proof}

By Lemma \ref{lemma: extension and contraction}, $D(xy)$ corresponds to the prime ideals of the Laurent polynomial semiring $\mathbb B[x^{\pm1},y^{\pm1}]$ over $\mathbb B$. Moreover, in \cite[Theorem 5.1]{mincheva2025prime}, Mincheva and Sakran describe certain special subsets of $V(x,y)$ in terms of prime subsets of $\mathbb N^2$. In our setting, to understand $\Spec(\mathbb B[x,y])$, we focus on the pieces $V(x)\cap D(y)$ and $V(y)\cap D(x)$. By symmetry, it therefore suffices to study $V(x)\cap D(y)$.

Let $P\in V(x)\cap D(y)$, and let $S=\{1,y,y^2,\ldots\}$. Since $P\cap S=\emptyset$, localization gives a prime ideal $P_y=S^{-1}P\subseteq \mathbb B[x,y^{\pm1}]$ containing $x$. Conversely, primes of $\mathbb B[x,y^{\pm1}]$ containing $x$ contract to primes in $V(x)\cap D(y)$. Thus the problem may be reformulated as follows: classify the prime ideals of $\mathbb B[x,y^{\pm1}]$ containing $x$. Such a prime automatically contains every monomial $x^ay^b$ with $a>0$ and $b\in\mathbb Z$, because $x\in P_y$ and $y$ is invertible.

We view $\mathbb B[y^{\pm1}]$ as a subsemiring of $\mathbb B[x,y^{\pm1}]$. Every element $F\in \mathbb B[x,y^{\pm1}]$ has a unique decomposition 
\begin{equation}\label{eq: two parts}
F=a_F+h_F,\ a_F\in \mathbb B[y^{\pm1}],\ h_F\in x\left(\mathbb B[x,y^{\pm1}]\right).
\end{equation}
where $a_F$ is the $x$-free part of $F$ and every monomial of $h_F$ has positive $x$-degree. In what follows, we will use the notation \eqref{eq: two parts} without mentioning it each time. 

Consider the evaluation map at $x=0$:
\[
\rho:\mathbb B[x,y^{\pm1}]\to \mathbb B[y^{\pm1}],\quad \rho(a_F+h_F)=a_F.
\]
Clearly, $\rho$ is a semiring homomorphism. In fact, $\rho$ induces an injection $\rho^*:\Spec (\mathbb B[y^{\pm1}])\to \Spec ( \mathbb B[x,y^{\pm1}] )$ from the following.\footnote{This is also well-known, but we only include for the reader's convenience.}

\begin{lem}
Let $f:A \to B$ be a surjective semiring homomorphism. Then, the induced map $f^*:\Spec B \to \Spec A$ is an injection. 
\end{lem}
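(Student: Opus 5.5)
The plan is to pull prime ideals back along $f$ and show that they can be recovered from their preimages; injectivity then follows. For a prime ideal $\mathfrak q\subseteq B$ set $f^*(\mathfrak q)=f^{-1}(\mathfrak q)$. We first record that this is a prime ideal of $A$, which is routine:
\begin{itemize}
\item It is an ideal, since $f$ preserves sums and products.
\item It is proper, since $f(1)=1\notin\mathfrak q$.
\item It is prime: if $ab\in f^{-1}(\mathfrak q)$ then $f(a)f(b)\in\mathfrak q$, so $f(a)\in\mathfrak q$ or $f(b)\in\mathfrak q$.
\end{itemize}
Thus $f^*$ is a well-defined map $\Spec B\to\Spec A$. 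Surjectivity of $f$ is not needed for this step.

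The key step is the identity $f(f^{-1}(\mathfrak q))=\mathfrak q$ for every subset $\mathfrak q\subseteq B$, and this is exactly where surjectivity enters. The inclusion $\subseteq$ is trivial. For $\supseteq$, take $b\in\mathfrak q$ and use surjectivity to choose $a\in A$ with $f(a)=b$. Then $a\in f^{-1}(\mathfrak q)$, so $b\in f(f^{-1}(\mathfrak q))$. This set-theoretic identity is all that is required.

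Injectivity is now immediate. Suppose $\mathfrak q_1,\mathfrak q_2\in\Spec B$ satisfy $f^{-1}(\mathfrak q_1)=f^{-1}(\mathfrak q_2)$. Applying $f$ to both sides and using the identity above gives $\mathfrak q_1=f(f^{-1}(\mathfrak q_1))=f(f^{-1}(\mathfrak q_2))=\mathfrak q_2$.

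There is no real obstacle in this argument. The only point to be careful about is not to invoke kernels or quotient correspondences in the semiring setting: the paper notes that $\ker$ can be larger than the ideal one quotients by. The argument above avoids this by working purely with preimages and images of sets. In the application, $\rho$ is surjective because it restricts to the identity on $\mathbb B[y^{\pm1}]$.
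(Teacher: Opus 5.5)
Your proof is correct and follows essentially the same route as the paper: you check that $f^{-1}(\mathfrak q)$ is a prime ideal, then use surjectivity to lift each $b\in\mathfrak q_1$ to some $a\in f^{-1}(\mathfrak q_1)=f^{-1}(\mathfrak q_2)$. The only difference is that you package this lifting step as the identity $f(f^{-1}(\mathfrak q))=\mathfrak q$.
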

\begin{proof}
First, if $Q\subseteq B$ is prime, then $f^{-1}(Q)$ is prime in $A$. Indeed, it is an ideal because $f$ preserves addition and multiplication. It is proper since $1_A\in f^{-1}(Q)$ would imply $1_B=f(1_A)\in Q$, contradicting that $Q$ is proper. Finally, if $ab\in f^{-1}(Q)$, then $f(a)f(b)=f(ab)\in Q,$
so $f(a)\in Q$ or $f(b)\in Q$, hence $a\in f^{-1}(Q)$ or $b\in f^{-1}(Q)$.

Now suppose $Q_1,Q_2\in \Spec B$ and $f^{-1}(Q_1)=f^{-1}(Q_2)$. If $b\in Q_1$, choose $a\in A$ with $f(a)=b$, using surjectivity. Then $a\in f^{-1}(Q_1)=f^{-1}(Q_2)$, so $b=f(a)\in Q_2$. Thus $Q_1\subseteq Q_2$, and the reverse inclusion is identical. Hence $Q_1=Q_2$.
\end{proof}

We now consider the natural restriction map obtained by contraction. Let $\mathfrak P\in V(x)\cap D(y)$ and let $S=\{1,y,y^2,\ldots\}$. Since $\mathfrak P\cap S=\emptyset$, the localization $S^{-1}\mathfrak P$ is a prime ideal of $\mathbb B[x,y^{\pm1}]$. The inclusion $\mathbb B[y^{\pm1}]\hookrightarrow\mathbb B[x,y^{\pm1}]$ therefore gives a prime ideal
\[
(S^{-1}\mathfrak P)\cap\mathbb B[y^{\pm1}]
\]
by contraction. Hence there is a natural restriction map
\begin{equation}\label{eq: iota}
\iota:V(x)\cap D(y)\longrightarrow\operatorname{Spec}\mathbb B[y^{\pm1}],\qquad
\iota(\mathfrak P)=(S^{-1}\mathfrak P)\cap\mathbb B[y^{\pm1}].
\end{equation}

We will be interested in the fibers of $\iota$. To be precise, for a fixed prime ideal
$\mathfrak q\subseteq \mathbb B[y^{\pm1}]$ and $S=\{1,y,y^2,\ldots\}$, the fiber of the map
$\iota$ in \eqref{eq: iota} is
\begin{equation}
\mathcal F_{\mathfrak q}
:=
\left\{
\mathfrak P\in V(x)\cap D(y)
\;\middle|\;
(S^{-1}\mathfrak P)\cap \mathbb B[y^{\pm1}]
=
\mathfrak q
\right\}.
\end{equation}
$\mathcal{F}_q$ is a subset of $\Spec \mathbb{B}[x,y]$. It will be convenient to study this fiber after localizing at $S$, i.e., by viewing it as a subset of $\Spec \mathbb{B}[x,y^{\pm1}]$. To this end, we consider the following set.

\begin{mydef}
For a fixed prime ideal
$\mathfrak q\subseteq \mathbb B[y^{\pm1}]$, define the following set:
\begin{equation}
\widetilde{\mathcal F}_{\mathfrak q}
:=
\left\{
\mathfrak p\in
\operatorname{Spec}\mathbb B[x,y^{\pm1}]
\;\middle|\;
x\in\mathfrak p,\ 
\mathfrak p\cap\mathbb B[y^{\pm1}]
=
\mathfrak q
\right\}.
\end{equation}
\end{mydef}

By Lemma~\ref{lemma: extension and contraction}, we have an order-preserving bijection 
\[
\mathcal F_{\mathfrak q}
\to
\widetilde{\mathcal F}_{\mathfrak q}, \quad
\mathfrak P\longmapsto S^{-1}\mathfrak P, \quad
\mathfrak p\cap\mathbb B[x,y] \mapsfrom  \mathfrak p.
\]
Thus to determine the minimal and maximal elements of ${\mathcal F}_{\mathfrak q}$, it is enough to consider $\widetilde{\mathcal F}_{\mathfrak q}$. 

Recall that every
$F\in\mathbb B[x,y^{\pm1}]$ has a unique decomposition
\[
F=a_F+h_F, \quad
a_F\in\mathbb B[y^{\pm1}], \quad
h_F\in x\left(\mathbb B[x,y^{\pm1}]\right),
\]
and that
\[
\rho:\mathbb B[x,y^{\pm1}]
\longrightarrow
\mathbb B[y^{\pm1}], \quad
\rho(a_F+h_F)=a_F,
\]
is evaluation at $x=0$.

\begin{lem}\label{lemma: minimal}
For every prime ideal
$\mathfrak q\subseteq\mathbb B[y^{\pm1}]$, the ideal
\[
\widetilde{\mathfrak p}_{\min}(\mathfrak q)
:=
\rho^{-1}(\mathfrak q)
=
\left\{
F=a_F+h_F\in\mathbb B[x,y^{\pm1}]
\;\middle|\;
a_F\in\mathfrak q
\right\}
\]
belongs to $\widetilde{\mathcal F}_{\mathfrak q}$ and is its
minimal element with respect to inclusion.
\end{lem}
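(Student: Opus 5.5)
The plan is to verify three things in turn: that $\rho^{-1}(\mathfrak q)$ is a prime ideal, that it lies in $\widetilde{\mathcal F}_{\mathfrak q}$, and that it is contained in every element of $\widetilde{\mathcal F}_{\mathfrak q}$.

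\emph{Primality.} Since $\rho$ is a semiring homomorphism and is surjective (it restricts to the identity on $\mathbb B[y^{\pm1}]$), the argument in the proof of the preceding lemma on surjective homomorphisms applies verbatim. The preimage $\rho^{-1}(\mathfrak q)$ is an ideal. It is proper because $\rho(1)=1\notin\mathfrak q$. It is prime because $\rho(FG)=\rho(F)\rho(G)\in\mathfrak q$ forces $\rho(F)\in\mathfrak q$ or $\rho(G)\in\mathfrak q$. The explicit description $\{F\mid a_F\in\mathfrak q\}$ is simply $\rho(F)=a_F$.

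\emph{Membership in $\widetilde{\mathcal F}_{\mathfrak q}$.} We have $\rho(x)=0\in\mathfrak q$, so $x\in\rho^{-1}(\mathfrak q)$. For $a\in\mathbb B[y^{\pm1}]$ we have $\rho(a)=a$, so $\rho^{-1}(\mathfrak q)\cap\mathbb B[y^{\pm1}]=\mathfrak q$.

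\emph{Minimality.} Let $\mathfrak p\in\widetilde{\mathcal F}_{\mathfrak q}$ and $F=a_F+h_F\in\rho^{-1}(\mathfrak q)$. Then $a_F\in\mathfrak q=\mathfrak p\cap\mathbb B[y^{\pm1}]\subseteq\mathfrak p$. Every monomial of $h_F$ has the form $x^iy^j$ with $i\ge1$, so $h_F=x\cdot g$ for some $g\in\mathbb B[x,y^{\pm1}]$, and hence $h_F\in\mathfrak p$ because $x\in\mathfrak p$. Since ideals are closed under addition, $F=a_F+h_F\in\mathfrak p$. Thus $\rho^{-1}(\mathfrak q)\subseteq\mathfrak p$, so it is the least element of $\widetilde{\mathcal F}_{\mathfrak q}$, and in particular its unique minimal element.

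No step is a genuine obstacle. The only point to watch is the direction of the argument: in a semiring one cannot conclude $a_F\in\mathfrak p$ from $F\in\mathfrak p$, but here we only need the reverse implication, from $a_F,h_F\in\mathfrak p$ to $F\in\mathfrak p$, which holds for any ideal. This asymmetry is exactly why the fiber can have more than one element, with larger non-subtractive primes sitting above $\rho^{-1}(\mathfrak q)$.
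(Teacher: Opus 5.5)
Your proposal is correct and follows essentially the same route as the paper: primality of $\rho^{-1}(\mathfrak q)$ because it is the preimage of a prime under a homomorphism, $x\in\rho^{-1}(\mathfrak q)$ since $\rho(x)=0$, and minimality by splitting $F=a_F+h_F$ with $a_F\in\mathfrak q\subseteq\mathfrak p$ and $h_F\in x\,\mathbb B[x,y^{\pm1}]\subseteq\mathfrak p$. Your extra checks spell out what the paper leaves implicit: properness via $\rho(1)=1\notin\mathfrak q$, and $\rho^{-1}(\mathfrak q)\cap\mathbb B[y^{\pm1}]=\mathfrak q$ because $\rho$ restricts to the identity on $\mathbb B[y^{\pm1}]$.
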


\begin{proof}
Since $\rho$ is a semiring homomorphism and $\mathfrak q$ is
prime, the inverse image
$\rho^{-1}(\mathfrak q)$ is a prime ideal. Moreover,
$\rho(x)=0\in\mathfrak q,$
so $x\in\widetilde{\mathfrak p}_{\min}(\mathfrak q)$, and
$\widetilde{\mathfrak p}_{\min}(\mathfrak q)
\cap\mathbb B[y^{\pm1}] =
\mathfrak q.$
Consequently,
$\widetilde{\mathfrak p}_{\min}(\mathfrak q)
\in\widetilde{\mathcal F}_{\mathfrak q}$.

Now let
$\mathfrak p\in\widetilde{\mathcal F}_{\mathfrak q}$, and suppose that
$F=a_F+h_F \in\widetilde{\mathfrak p}_{\min}(\mathfrak q).$
Then
$a_F\in\mathfrak q = \mathfrak p\cap\mathbb B[y^{\pm1}],$
so $a_F\in\mathfrak p$. Since $x\in\mathfrak p$, we also have
$h_F\in x\left(\mathbb B[x,y^{\pm1}]\right)
\subseteq\mathfrak p.$
It follows that $F=a_F+h_F\in\mathfrak p$. Hence
$\widetilde{\mathfrak p}_{\min}(\mathfrak q)
\subseteq\mathfrak p,$
as required.
\end{proof}

\begin{lem}\label{lemma: maximal}
For every prime ideal
$\mathfrak q\subseteq\mathbb B[y^{\pm1}]$, the set
\[
\widetilde{\mathfrak p}_{\max}(\mathfrak q)
:=
\left\{
F=a_F+h_F\in\mathbb B[x,y^{\pm1}]
\;\middle|\;
a_F\in\mathfrak q
\text{ or }
h_F\neq0
\right\}
\]
is a prime ideal belonging to
$\widetilde{\mathcal F}_{\mathfrak q}$. Moreover, it is the
maximal element of
$\widetilde{\mathcal F}_{\mathfrak q}$ with respect to inclusion.
\end{lem}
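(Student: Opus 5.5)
Write $\widetilde{\mathfrak p}=\widetilde{\mathfrak p}_{\max}(\mathfrak q)$. Equivalently, $\widetilde{\mathfrak p}$ is the union of $\mathfrak q$ with the set of all $F$ having at least one monomial of positive $x$-degree. Its complement is therefore $\mathbb B[y^{\pm1}]\setminus\mathfrak q$. Everything rests on the observation that $h_{FG}=0$ forces $h_F=h_G=0$, provided $F,G\neq 0$. This holds because $\mathbb B$ has no zero divisors and no additive cancellation, so the product of a monomial with positive $x$-degree and any nonzero monomial survives in the support of $FG$.

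\textbf{Ideal.} Take $F,G\in\widetilde{\mathfrak p}$ and $r\in\mathbb B[x,y^{\pm1}]$.
\begin{itemize}
\item If $h_F\neq0$ or $h_G\neq 0$, then $h_{F+G}=h_F+h_G\neq0$ by idempotency, since supports only unite.
\item Otherwise $F,G\in\mathfrak q$, so $F+G\in\mathfrak q$.
\end{itemize}
For $rF$, there are three cases.
\begin{itemize}
\item If $r=0$ or $F=0$, then $rF=0\in\mathfrak q$.
\item If $h_F\neq 0$ or $h_r\neq0$ (with $r,F\neq 0$), then $h_{rF}\neq 0$ by the observation above.
\item If $r,F\in\mathbb B[y^{\pm1}]$ with $F\in\mathfrak q$, then $rF\in\mathfrak q$.
\end{itemize}
The ideal is proper: $1$ has $h_1=0$, and $1\notin\mathfrak q$ because $\mathfrak q$ is a proper ideal.

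\textbf{Primality.} Suppose $FG\in\widetilde{\mathfrak p}$ but $F,G\notin\widetilde{\mathfrak p}$. Then $F,G\in\mathbb B[y^{\pm1}]\setminus\mathfrak q$, so $FG\in\mathbb B[y^{\pm1}]$ and $FG\notin\mathfrak q$ since $\mathfrak q$ is prime. Hence $FG\notin\widetilde{\mathfrak p}$, a contradiction. Clearly $x\in\widetilde{\mathfrak p}$. Also $\widetilde{\mathfrak p}\cap\mathbb B[y^{\pm1}]=\mathfrak q$, because elements of $\mathbb B[y^{\pm1}]$ have $h=0$. So $\widetilde{\mathfrak p}\in\widetilde{\mathcal F}_{\mathfrak q}$.

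\textbf{Maximality.} Let $\mathfrak p\in\widetilde{\mathcal F}_{\mathfrak q}$ and $F\in\mathfrak p$.
\begin{itemize}
\item If $h_F\neq0$, then $F\in\widetilde{\mathfrak p}$ directly.
\item If $h_F=0$, then $F=a_F\in\mathfrak p\cap\mathbb B[y^{\pm1}]=\mathfrak q$, so again $F\in\widetilde{\mathfrak p}$.
\end{itemize}
Hence $\mathfrak p\subseteq\widetilde{\mathfrak p}$. The only delicate point is the product argument for the ideal property, where the absence of cancellation in $\mathbb B$ is essential; everything else is formal.
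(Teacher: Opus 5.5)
Your proof is correct and follows essentially the same route as the paper. Ideal closure is handled by a case split on whether the positive-$x$-degree part vanishes, using that supports multiply as sumsets in $\mathbb B[x,y^{\pm1}]$. Primality comes from the complement being $\mathbb B[y^{\pm1}]\setminus\mathfrak q$, and maximality from the same two-case argument. The only differences are cosmetic: you check properness explicitly, and you split the product case on whether either factor has nonzero $x$-part, whereas the paper splits on $h_F$ alone and uses $a_{FG}=a_Fa_G$ when $h_F=0$.
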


\begin{proof}
We first show that
$\widetilde{\mathfrak p}_{\max}(\mathfrak q)$ is an ideal. 

Let
$F=a_F+h_F, \
G=a_G+h_G$
belong to
$\widetilde{\mathfrak p}_{\max}(\mathfrak q)$. If either $h_F$
or $h_G$ is nonzero, then the positive $x$-degree part of
$F+G$ is nonzero. If $h_F=h_G=0$, then
$a_F,a_G\in\mathfrak q$, so
$F+G=a_F+a_G\in\mathfrak q.$
Thus the set is closed under addition.

Now let $F\in\widetilde{\mathfrak p}_{\max}(\mathfrak q)$ and
$G\in\mathbb B[x,y^{\pm1}]$. If $G=0$, then $FG=0$ belongs to
the set. Suppose $G\neq0$. If $h_F\neq0$, then $FG$ has a
nonzero term of positive $x$-degree, since
$\mathbb B[x,y^{\pm1}]$ has no zero divisors. If $h_F=0$, then
$a_F\in\mathfrak q$, and the $x$-free part of $FG$ is
$a_Fa_G\in\mathfrak q$. Hence $FG$ again belongs to
$\widetilde{\mathfrak p}_{\max}(\mathfrak q)$.

Its complement is
$\mathbb B[x,y^{\pm1}]
\setminus
\widetilde{\mathfrak p}_{\max}(\mathfrak q) = \mathbb B[y^{\pm1}]\setminus\mathfrak q.$
Because $\mathfrak q$ is prime, this complement is
multiplicatively closed. Therefore
$\widetilde{\mathfrak p}_{\max}(\mathfrak q)$ is prime.

It contains $x$, since the $x$-positive part of $x$ is
nonzero, and
$\widetilde{\mathfrak p}_{\max}(\mathfrak q)
\cap\mathbb B[y^{\pm1}] = \mathfrak q.$
Thus
$\widetilde{\mathfrak p}_{\max}(\mathfrak q) \in
\widetilde{\mathcal F}_{\mathfrak q}.$

Finally, let
$\mathfrak p\in\widetilde{\mathcal F}_{\mathfrak q}$ and let
$F=a_F+h_F\in\mathfrak p$. If $h_F\neq0$, then
$F\in\widetilde{\mathfrak p}_{\max}(\mathfrak q)$ by definition.
If $h_F=0$, then
$F=a_F \in \mathfrak p\cap\mathbb B[y^{\pm1}] = \mathfrak q,$
so again
$F\in\widetilde{\mathfrak p}_{\max}(\mathfrak q)$. Hence
$\mathfrak p
\subseteq
\widetilde{\mathfrak p}_{\max}(\mathfrak q),$
which proves maximality.
\end{proof}

Combining Lemmas~\ref{lemma: minimal} and
\ref{lemma: maximal}, and then applying the
extension--contraction correspondence, gives the following theorem.

\begin{mythm}\label{thm: boundary-fiber}
Let
$\mathfrak q\in\operatorname{Spec}\mathbb B[y^{\pm1}]$. Then the
fiber
\[
\mathcal F_{\mathfrak q}
=
\left\{
\mathfrak P\in V(x)\cap D(y)
\;\middle|\;
(S^{-1}\mathfrak P)\cap\mathbb B[y^{\pm1}]
=
\mathfrak q
\right\}
\]
has a minimal element and a maximal element with respect to
inclusion. More precisely, these elements are
$\mathfrak P_{\min}(\mathfrak q)
=
\widetilde{\mathfrak p}_{\min}(\mathfrak q)
\cap\mathbb B[x,y]$
and
$\mathfrak P_{\max}(\mathfrak q)
=
\widetilde{\mathfrak p}_{\max}(\mathfrak q)
\cap\mathbb B[x,y].$
If
\[
F=a_F+h_F\in\mathbb B[x,y], \quad
a_F\in\mathbb B[y], \quad
h_F\in x\left(\mathbb B[x,y]\right),
\]
then they admit the explicit descriptions
\[
\mathfrak P_{\min}(\mathfrak q) = \left\{F=a_F+h_F
\mid
a_F\in\mathfrak q\cap\mathbb B[y]\right\},
\]
and
\[
\mathfrak P_{\max}(\mathfrak q) = \left\{F=a_F+h_F \mid a_F\in\mathfrak q\cap\mathbb B[y] \textrm{ or } h_F\neq0 \right\}.
\]
Consequently, every
$\mathfrak P\in\mathcal F_{\mathfrak q}$ satisfies
$\mathfrak P_{\min}(\mathfrak q)
\subseteq
\mathfrak P
\subseteq
\mathfrak P_{\max}(\mathfrak q).$
\end{mythm}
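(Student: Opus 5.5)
The plan is to transport Lemmas~\ref{lemma: minimal} and \ref{lemma: maximal} from $\mathbb B[x,y^{\pm1}]$ back to $\mathbb B[x,y]$ along the order-preserving bijection $\mathcal F_{\mathfrak q}\to\widetilde{\mathcal F}_{\mathfrak q}$, $\mathfrak P\mapsto S^{-1}\mathfrak P$, with inverse $\mathfrak p\mapsto \mathfrak p\cap\mathbb B[x,y]$, coming from Lemma~\ref{lemma: extension and contraction}.

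First I check that this bijection is really between the two fibers. If $\mathfrak P\in V(x)\cap D(y)$, then $S^{-1}\mathfrak P$ is prime and contains $x$. Its contraction to $\mathbb B[y^{\pm1}]$ is $\mathfrak q$ exactly when $\mathfrak P\in\mathcal F_{\mathfrak q}$. Conversely, suppose $\mathfrak p\in\widetilde{\mathcal F}_{\mathfrak q}$. Then $\mathfrak p\cap\mathbb B[x,y]$ is prime and contains $x$. It misses $y$ because $y$ is a unit in $\mathbb B[x,y^{\pm1}]$ and $\mathfrak p$ is proper. Its localization recovers $\mathfrak p$ by Lemma~\ref{lemma: extension and contraction}. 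Both maps preserve inclusion, so minimal and maximal elements correspond. Lemmas~\ref{lemma: minimal} and \ref{lemma: maximal} then give that $\mathfrak P_{\min}(\mathfrak q)$ and $\mathfrak P_{\max}(\mathfrak q)$, defined as the contractions of $\widetilde{\mathfrak p}_{\min}(\mathfrak q)$ and $\widetilde{\mathfrak p}_{\max}(\mathfrak q)$, are the minimum and maximum of $\mathcal F_{\mathfrak q}$. The sandwich $\mathfrak P_{\min}(\mathfrak q)\subseteq\mathfrak P\subseteq\mathfrak P_{\max}(\mathfrak q)$ follows by contracting the corresponding inclusions upstairs.

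Second, I derive the explicit descriptions. For $F=a_F+h_F\in\mathbb B[x,y]$, viewed inside $\mathbb B[x,y^{\pm1}]$, the decomposition \eqref{eq: two parts} is the same one: the $x$-free part is $a_F\in\mathbb B[y]$ and the $x$-positive part is $h_F$. This holds because the splitting by $x$-degree is unique and is compatible with the inclusion $\mathbb B[x,y]\subseteq\mathbb B[x,y^{\pm1}]$. Hence $F\in\widetilde{\mathfrak p}_{\min}(\mathfrak q)$ if and only if $a_F\in\mathfrak q$. Since $a_F\in\mathbb B[y]$, this is the same as $a_F\in\mathfrak q\cap\mathbb B[y]$. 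The same reasoning gives the stated description of $\mathfrak P_{\max}(\mathfrak q)$, with the extra clause $h_F\neq0$.

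The only point needing care is making sure the inclusion-preserving bijection really restricts to the fibers. In particular, the condition $(S^{-1}\mathfrak P)\cap\mathbb B[y^{\pm1}]=\mathfrak q$ must match the defining condition of $\widetilde{\mathcal F}_{\mathfrak q}$, and $\mathfrak P\cap S=\emptyset$ must hold. Both follow directly from the definitions together with Lemma~\ref{lemma: extension and contraction}, so I expect no real obstacle; the substantive work is already contained in Lemmas~\ref{lemma: minimal} and \ref{lemma: maximal}.
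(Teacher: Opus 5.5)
Your proposal is correct and follows the paper's own argument. The paper obtains Theorem~\ref{thm: boundary-fiber} exactly this way, by combining Lemmas~\ref{lemma: minimal} and~\ref{lemma: maximal} with the order-preserving extension--contraction bijection $\mathcal F_{\mathfrak q}\leftrightarrow\widetilde{\mathcal F}_{\mathfrak q}$ from Lemma~\ref{lemma: extension and contraction}. Your extra checks (that the bijection restricts to the fibers, and that the $x$-degree splitting is compatible with $\mathbb B[x,y]\subseteq\mathbb B[x,y^{\pm1}]$, which gives the explicit descriptions) spell out details the paper leaves implicit.
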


\begin{rmk}\label{rmk: subtractive boundary fiber}
The map $\iota:V(x)\cap D(y)\to\operatorname{Spec}\mathbb B[y^{\pm1}]$ restricts to a bijection between subtractive prime ideals (Definition \ref{definition: ideals and primes}) in $V(x)\cap D(y)$ and subtractive prime ideals of $\mathbb B[y^{\pm1}]$. Its inverse sends $\mathfrak q$ to
$$
\mathfrak P_{\min}(\mathfrak q)=\rho^{-1}(\mathfrak q)\cap\mathbb B[x,y]=\{F=a_F+h_F\in\mathbb B[x,y] \mid a_F\in\mathfrak q\cap\mathbb B[y]\},
$$
where $\rho:\mathbb B[x,y^{\pm1}]\to\mathbb B[y^{\pm1}]$ is evaluation at $x=0$ and $h_F\in x \left(\mathbb B[x,y]\right)$.

Indeed, let $\mathfrak P\in V(x)\cap D(y)$ be subtractive, set $\mathfrak q=\iota(\mathfrak P)$, and write $\mathfrak p=S^{-1}\mathfrak P$. Lemma~\ref{lemma: minimal} gives $\rho^{-1}(\mathfrak q)\subseteq\mathfrak p$. Conversely, if $F=a_F+h_F\in\mathfrak p$, then after multiplying by a sufficiently large power $y^N$ we have $y^NF\in\mathfrak P$, with $y^Nh_F\in\mathfrak P$ because $x\in\mathfrak P$. Subtractivity therefore gives $y^Na_F\in\mathfrak P$, so $a_F\in\mathfrak q$ and hence $F\in\rho^{-1}(\mathfrak q)$. Thus $\mathfrak p=\rho^{-1}(\mathfrak q)$ and $\mathfrak P=\mathfrak P_{\min}(\mathfrak q)$. Conversely, if $\mathfrak q$ is subtractive and prime, then $\rho^{-1}(\mathfrak q)$ and its contraction to $\mathbb B[x,y]$ are subtractive and prime. Hence every fiber over a subtractive prime contains exactly one subtractive prime, namely its minimal element. The possible intermediate primes arise from the failure of subtractivity.
\end{rmk}

\begin{rmk}
\label{rmk: multivariable boundary fiber}
Theorem \ref{thm: boundary-fiber} extends without change to $R_m[x]:=\mathbb B[y_1^{\pm1},\ldots,y_m^{\pm1}]$. For $\mathfrak q\in\operatorname{Spec}R_m$, the fiber
$$
\widetilde{\mathcal F}_{\mathfrak q}=\{\mathfrak p\in\operatorname{Spec}R_m[x] \mid x\in\mathfrak p,\ \mathfrak p\cap R_m=\mathfrak q\}
$$
has minimal and maximal elements:
\[
\mathfrak p_{\min}(\mathfrak q)=\{a+h \mid a\in\mathfrak q,\ h\in x\left(R_m[x]\right)\} \quad \text{and} \quad \mathfrak p_{\max}(\mathfrak q)=\{a+h \mid a\in\mathfrak q\text{ or }h\neq0\}.
\]
Equivalently, if $S$ is the multiplicative subset of $\mathbb B[x,y_1,\ldots,y_m]$ generated by $y_1,\ldots,y_m$, then the fibers of the localization--contraction map
$$
V(x)\cap D(y_1\cdots y_m)\longrightarrow\operatorname{Spec}\mathbb B[y_1^{\pm1},\ldots,y_m^{\pm1}],\qquad \mathfrak P\longmapsto(S^{-1}\mathfrak P)\cap\mathbb B[y_1^{\pm1},\ldots,y_m^{\pm1}]
$$
have minimal and maximal elements obtained by contracting the ideals above. The proof is identical, since it uses only the unique decomposition $F=a_F+h_F$ with $a_F\in R_m$ and $h_F\in x\left(R_m[x]\right)$, evaluation at $x=0$, and primeness of $\mathfrak q$. The same argument applies after choosing any coordinate as a fixed variable and inverting any collection of the remaining coordinates.
\end{rmk}

\begin{rmk}
Theorem~\ref{thm: boundary-fiber} gives lower and upper
bounds for the primes in the fiber, but it does not explicitly
describe the possible intermediate prime ideals. Such intermediate
primes encode mixed conditions involving both $x$ and $y$.

It is most natural to record these conditions in the localized
fiber, where $y$ is invertible. For
$\mathfrak p\in\widetilde{\mathcal F}_{\mathfrak q}$, define
\[
B_x(\mathfrak p)
:=
\left\{
(a,b)\in\mathbb N_{>0}\times\mathbb Z
\;\middle|\;
1+x^ay^b\in\mathfrak p
\right\}.
\]
Then $B_x(\mathfrak p)$ is a prime subset of the monoid
$\mathbb N_{>0}\times\mathbb Z$ (Definition \ref{definition: prime subset}).

Indeed, let
$u=(a,b), \ v=(c,d), \ X^u=x^ay^b, \ X^v=x^cy^d.$
Suppose that $u+v\in B_x(\mathfrak p)$. Then
$1+X^{u+v}\in\mathfrak p.$
Since $x\in\mathfrak p$ and the first coordinates of $u$ and
$v$ are positive, both $X^u$ and $X^v$ belong to
$\mathfrak p$. Therefore,
\[
(1+X^u)(1+X^v)=1+X^u+X^v+X^{u+v} \in\mathfrak p.
\]
By primeness of $\mathfrak p$,
$(1+X^u)\in\mathfrak p$ or 
$(1+X^v)\in\mathfrak p.$
Thus
$u\in B_x(\mathfrak p)
\ \text{or}\
v\in B_x(\mathfrak p),$
which proves that $B_x(\mathfrak p)$ is prime.

For
$\mathfrak P\in\mathcal F_{\mathfrak q}$, one may equivalently
define
$B_x(\mathfrak P)
:=
B_x(S^{-1}\mathfrak P).$
This invariant is analogous to the one-variable invariant
$A_P=\{a\mid 1+t^a\in P\}$
introduced in \cite{alarcon1994commutative} and further studied in
\cite{mincheva2025prime}. It need not determine the prime ideal by
itself, since relations involving polynomials with larger support may
contain additional information.
\end{rmk}

\section{Maps from $\mathbb{B}[x,y]$ to $\mathbb{B}[t]$}

In this section, we consider the following maps. For $i,j \in \mathbb{N}$, let
\begin{equation}\label{eq: pi map}
\pi_{i,j}:\mathbb{B}[x,y] \to \mathbb{B}[t], \qquad x\mapsto t^i,\quad y\mapsto t^j.
\end{equation}

For $F \neq 0\in \mathbb{B}[x,y]$, we let $T_{a,b}(F)=\{ai+bj\mid (i,j)\in\operatorname{Supp}(F)\}$ and consider the following:
\[
\mu_{a,b}(F)=\min T_{a,b}(F), \quad T'_{a,b}(F):=\{n - \mu_{a,b}(F) \mid n \in T_{a,b}(F)\}.
\]

Two specific classes of prime ideals, $I_A$ and $J_A$, were introduced in \cite{alarcon1994commutative} and studied further in \cite{mincheva2025prime}. For $A\subseteq\mathbb{N}$, they are defined by
\begin{equation}\label{eq: JA}
I_A:=\angles{t,{1+t^a\mid a\in A}},\qquad
J_A:=I_A-\{t^m f\mid m\geq 0,\ f\notin I_A\}.
\end{equation}

Our goal is to give an explicit description of the preimages of $I_A$ and $J_A$ under $\pi_{i,j}$. This description will be used later to show that, in our setting, the homogenization of a prime ideal need not be prime (see Example \ref{example: homogenization}).

\begin{lem}\label{lemma: one-variable lemma}
Let $A\subseteq \mathbb{N}$ be a prime subset. Let $f \neq 0 \in \mathbb{B}[t]$ and $E=\text{Supp}(f)$. Then, we have $f \in I_A$ if and only if $0 \not \in E$ or $E \cap A \neq \emptyset$.
\end{lem}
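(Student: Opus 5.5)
We prove both directions by working directly with the generators $t$ and $1+t^a$ ($a\in A$) of $I_A$, in the spirit of the proof of Lemma~\ref{lem: x,1+x,x+y}. Primality of $A$ is used only at one point of the forward direction.

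\emph{Sufficiency.} Suppose $0\notin E$. Then every monomial of $f$ is divisible by $t$, so $f=t\cdot g$ and $f\in I_A$. Now suppose $0\in E$ and some $a\in E\cap A$. Note that $a\neq 0$: if $0\in A$ then $1+t^0=1\in I_A$, but we may also simply observe that $a\ne0$ whenever $0\in A$ is excluded. (If $0\in A$, then $1\in I_A$ and the statement is trivial.) Write
\[
f=(1+t^a)+g, \qquad \text{where } g \text{ collects the remaining monomials of } f.
\]
By idempotency, repeating $1$ or $t^a$ does not change $f$. Every monomial of $g$ has positive degree, so $g\in\angles{t}$. Hence $f\in I_A$.

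\emph{Necessity.} Suppose $f\in I_A$ and $0\in E$; we must show $E\cap A\neq\emptyset$. Write
\[
f=tg+\sum_{a\in A'} r_a(1+t^a),
\]
with $A'\subseteq A$ finite and $r_a\in\mathbb{B}[t]$. Since $tg$ has no constant term and $0\in E$, some $r_a$ has nonzero constant term. Then $r_a(1+t^a)$ contributes the monomial $t^a$ to $f$: there is no cancellation in $\mathbb{B}$, since supports are simply unioned. So $a\in E\cap A$. In this direction primality of $A$ is not even needed, since $a$ itself lies in $A$ and in $E$.

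The one obstacle is the degenerate generator $a=0$. If $0\in A$, then $1\in I_A$ and $I_A=\mathbb{B}[t]$. In that case $0\in E\cap A$ whenever $0\in E$, and every nonzero $f$ lies in $I_A$, so the equivalence still holds trivially. Apart from this case the argument is purely support bookkeeping. The key fact throughout is that in $\mathbb{B}[t]$ the support of a sum is the union of the supports, and the support of a product is the sumset of the supports.
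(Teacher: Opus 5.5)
Your proof is correct and matches the paper's argument: the constant term of $f$ must come from some $r_a(1+t^a)$ with $0\in\Supp(r_a)$, which forces $t^a$ into the support; conversely, you write $f=(1+t^a)+(\text{terms in }\langle t\rangle)$. As in the paper, primality of $A$ is never used, so drop your opening claim that it is used once; also, $0\in A$ needs no separate treatment, because $1+t^0=1$ lets both directions go through unchanged.
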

\begin{proof}
Suppose that $f\in I_A$ and $0 \in E$. Since $I_A$ is generated by $t$ and the polynomials $1+t^c$ with $c\in A$, we can write
\[
f=h_0(t)t+\sum_{\ell=1}^r h_\ell(t)(1+t^{c_\ell})
\]
for some $h_0,h_1,\ldots,h_r\in\mathbb B[t]$ and $c_1,\ldots,c_r\in A$. The summand $h_0(t)t$ contributes only positive powers of $t$, so it cannot contribute the constant term of $f$. Since $0\in E$, the constant term comes from $h_\ell(t)(1+t^{c_\ell})$ for some $\ell$. It follows that $0 \in \text{Supp}(h_\ell(t))$ for some $\ell$. But then the term $h_\ell(t)(1+t^{c_\ell})$ also contributes the monomial $t^{c_\ell}$. Since addition in $\mathbb B[t]$ is idempotent, supports are combined by union and there is no cancellation. Therefore $t^{c_\ell}$ remains in the support of $f$. Hence $c_\ell\in E\cap A$, so $E\cap A\neq\emptyset$. 

Conversely, suppose first that $0\notin E$. Then every monomial of $f$ is divisible by $t$, so $f\in\langle t\rangle\subseteq I_A$. Now suppose that $E\cap A\neq\emptyset$. If $0\notin E$, we are done by the previous case. If $0\in E$, choose $c\in E\cap A$. Then, we let
\[
f=(1+t^c)+\sum_{n\in E\setminus\left\{0,c\right\}}t^n.
\]
Each monomial $t^n$ appearing in the remaining sum has $n>0$, and hence lies in $\langle t\rangle\subseteq I_A$. Also $1+t^c\in I_A$ by definition. Thus $f\in I_A$.
\end{proof}

\begin{lem}\label{lemma: membership lemma}
With the same notation as above, 
We have the following:
\begin{enumerate}
\item %
$\pi_{a,b}^{-1}(I_A)=\left\{F\in\mathbb B[x,y]\setminus\left\{0\right\} \mid 0\notin T_{a,b}(F)\text{ or }T_{a,b}(F)\cap A\neq\emptyset\right\}\cup\left\{0\right\}$,
and 
\item %
$\pi_{a,b}^{-1}(J_A)=\left\{F\in\mathbb B[x,y]\setminus\left\{0\right\}\mid T'_{a,b}(F)\cap A\neq\emptyset\right\}\cup\left\{0\right\}$.
\end{enumerate}
\end{lem}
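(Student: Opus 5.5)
The plan is to reduce both statements to one-variable membership criteria, using the fact that $\pi_{a,b}$ acts on supports without cancellation. First I will show that for $F\neq 0$ in $\mathbb{B}[x,y]$,
\[
\operatorname{Supp}(\pi_{a,b}(F)) = T_{a,b}(F).
\]
Indeed, $\pi_{a,b}$ sends each monomial $x^iy^j$ to $t^{ai+bj}$. In $\mathbb{B}[t]$, addition takes the union of supports, since $1+1=1$ and there are no negatives, so no term can cancel. In particular $\pi_{a,b}(F)\neq 0$ whenever $F\neq 0$, and $\pi_{a,b}(0)=0$ lies in every ideal, which accounts for the $\{0\}$ appearing in both formulas.

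For (1), I apply Lemma \ref{lemma: one-variable lemma} to $f=\pi_{a,b}(F)$ with $E=T_{a,b}(F)$. This gives $F\in\pi_{a,b}^{-1}(I_A)$ if and only if $0\notin T_{a,b}(F)$ or $T_{a,b}(F)\cap A\neq\emptyset$, which is exactly the claimed description.

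For (2), I first need a one-variable criterion for $J_A$, which I would prove directly from the definition \eqref{eq: JA}. Let $f\neq 0$ and write $f=t^{\mu}g$, where $\mu=\min\operatorname{Supp}(f)$, so that $0\in\operatorname{Supp}(g)$ and $\operatorname{Supp}(g)=\operatorname{Supp}(f)-\mu$. The claim is
\[
f\in J_A \iff \operatorname{Supp}(g)\cap A\neq\emptyset .
\]
The proof analyzes every way of writing $f=t^m h$. This forces $m\le \mu$ and $h=t^{\mu-m}g$.
\begin{itemize}
\item If $m<\mu$, then $h$ has no constant term, so $h\in\langle t\rangle\subseteq I_A$. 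Such factorizations never exclude $f$ from $J_A$.
\item The only relevant factorization is $m=\mu$, $h=g$. By Lemma \ref{lemma: one-variable lemma}, since $0\in\operatorname{Supp}(g)$, we have $g\notin I_A$ exactly when $\operatorname{Supp}(g)\cap A=\emptyset$.
\end{itemize}
Therefore $f\in J_A$ if and only if $f\in I_A$ and $g\in I_A$. It remains to check that $g\in I_A$ already forces $f\in I_A$. If $\mu=0$ then $f=g$. If $\mu>0$ then $f\in\langle t\rangle\subseteq I_A$. Hence the condition collapses to $\operatorname{Supp}(g)\cap A\neq\emptyset$.

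Applying this to $f=\pi_{a,b}(F)$ completes (2). Here $\mu=\mu_{a,b}(F)$ and $\operatorname{Supp}(g)=T_{a,b}(F)-\mu_{a,b}(F)=T'_{a,b}(F)$, which gives the stated description of $\pi_{a,b}^{-1}(J_A)$.

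The main obstacle is the $J_A$ criterion. The definition of $J_A$ as a set difference is awkward, and I must check carefully that the only factorization $f=t^mh$ with $h\notin I_A$ is the one that strips off the full minimal power $t^{\mu}$. The equivalent generator description \eqref{eq: J_A} could serve as a cross-check, but I expect the direct set-difference argument above to be cleaner.
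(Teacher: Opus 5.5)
Your proposal is correct and follows the paper's proof. You show $\operatorname{Supp}(\pi_{a,b}(F))=T_{a,b}(F)$ and apply Lemma~\ref{lemma: one-variable lemma} for (1); for (2) you factor $\pi_{a,b}(F)=t^{\mu}g$ with $\operatorname{Supp}(g)=T'_{a,b}(F)$, and the only difference from the paper is that you explicitly verify $f\in J_A\iff g\in I_A$ by examining every factorization $f=t^mh$, a step the paper simply asserts "by the definition of $J_A$."
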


\begin{proof}
(1) Let
\[
F=\sum_{(i,j)\in\operatorname{Supp}(F)}x^iy^j\neq 0.
\]
Then, we have
\[
\pi_{a,b}(F)=\sum_{(i,j)\in\operatorname{Supp}(F)}t^{ai+bj}.
\]
Note that different monomials of $F$ may map to the same power of $t$, but repeated exponents collapse because addition in $\mathbb B[t]$ is idempotent. Hence, we have
\begin{equation}\label{eq: support}
\operatorname{Supp}(\pi_{a,b}(F))=T_{a,b}(F).
\end{equation}
By Lemma \ref{lemma: one-variable lemma}, applied to $f=\pi_{a,b}(F)$, we have
\begin{equation}\label{eq: I_A}
F\in\pi_{a,b}^{-1}(I_A) 
\iff 0\notin\operatorname{Supp}(\pi_{a,b}(F))\text{ or }\operatorname{Supp}(\pi_{a,b}(F))\cap A\neq\emptyset.
\end{equation}
From \eqref{eq: support}, this is equivalent to
\begin{equation}
0\notin T_{a,b}(F)\text{ or }T_{a,b}(F)\cap A\neq\emptyset.  
\end{equation}
This proves the first assertion.

(2) Let $F\neq 0 \in \mathbb{B}[x,y]$. 
We let $\mu=\mu_{a,b}(F)$. Since $\operatorname{Supp}(\pi_{a,b}(F))=T_{a,b}(F)$, we can factor
\[
\pi_{a,b}(F)=t^\mu G_F(t),\quad \text{where} \quad G_F(t)=\sum_{n\in T_{a,b}(F)}t^{n-\mu}.
\]
By definition, $0 \in \text{Supp}(G_F)$, and in fact, we have
\[
\operatorname{Supp}(G_F)=T_{a,b}(F)-\mu=T'_{a,b}(F).
\]
By the definition of $J_A$, the polynomial $\pi_{a,b}(F)$ lies in $J_A$ if and only if $G_F\in I_A$. Since $0\in\operatorname{Supp}(G_F)$, by Lemma \ref{lemma: one-variable lemma}, we have
\[
G_F\in I_A \iff \operatorname{Supp}(G_F)\cap A\neq\emptyset.
\]
Since $\operatorname{Supp}(G_F)=T'_{a,b}(F)$, we have
\[
F\in\pi_{a,b}^{-1}(J_A) \iff T'_{a,b}(F)\cap A\neq\emptyset.
\]
\end{proof}
We note that in a view of Section \ref{section: decomposition}, we may observe that for $a,b >0$, 
\[
\pi_{a,b}^{-1}(I_A) \in V(x,y) \quad \textrm{and }\quad \pi_{a,b}^{-1}(J_A)\in D(xy).
\]

\section{Homogenization and dehomogenization}\label{section: homogenization}

In this section, we consider the homogenization and dehomogenization of prime ideals. We begin by recalling the corresponding ring-theoretic statements. Let $B=\bigoplus_{d\geq 0}B_d$ be a $\mathbb{Z}_{\geq 0}$-graded algebra over a ring $A$, and let $I\subseteq B$ be an ideal. The associated homogeneous ideal is
\[
I^h=\bigoplus_{d\geq 0}(I\cap B_d),
\]
or, equivalently, the ideal consisting of finite sums of homogeneous elements of $I$. Thus, $I$ is homogeneous if and only if $I=I^h$.

The following are elementary facts for rings.

\begin{lem}
Let $I,J\subseteq B$ be ideals of a graded ring $B$. Then the following hold.
\begin{enumerate}
\item[(a)] If $I$ is prime, then the associated homogeneous ideal $I^h$ is prime.
\item[(b)] Let us suppose $I$ and $J$ are homogeneous. Then $V_+(I)\subseteq V_+(J)$ if and only if $J\cap B_+\subseteq \sqrt{I}$.
\item[(c)] We have $\operatorname{Proj} B=\varnothing$ if and only if $B_+$ is nilpotent.
\end{enumerate}
\end{lem}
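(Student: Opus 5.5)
For (a), I will use the standard leading/lowest-term argument. Suppose $ab\in I^h$ with $a,b\notin I^h$. First reduce to the case where $a$ and $b$ are homogeneous. Write $a=\sum a_i$ and $b=\sum b_j$ in homogeneous components. Because $I^h$ is a homogeneous ideal, we may delete every component already lying in $I^h$ (by subtracting), and the conclusion $ab\in I^h$ is unaffected. After this, let $a_p$ and $b_q$ be the top-degree surviving components. The degree $p+q$ component of $ab$ is then $a_pb_q$, and it lies in $I^h\subseteq I$. Primality of $I$ gives $a_p\in I$ or $b_q\in I$. Since these are homogeneous, this means $a_p\in I^h$ or $b_q\in I^h$, a contradiction. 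Note that this step genuinely uses subtraction, which is exactly why the statement is limited to rings.

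For (b), the direction from right to left is immediate. If $\mathfrak p\in V_+(I)$, then $\mathfrak p\supseteq I$, so $\mathfrak p\supseteq\sqrt I\supseteq J\cap B_+$. Since $\mathfrak p$ is homogeneous, it contains the homogeneous components of elements of $J$ of positive degree. As $\mathfrak p\not\supseteq B_+$, this is enough to conclude $\mathfrak p\supseteq J$: for homogeneous $f\in J$ of degree $0$, pick homogeneous $s\in B_+\setminus\mathfrak p$; then $fs\in J\cap B_+\subseteq\mathfrak p$, so $f\in\mathfrak p$.

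For the converse I argue by contrapositive. Take a homogeneous $f\in J\cap B_+$ with $f\notin\sqrt I$; I need $\mathfrak p\in V_+(I)$ with $f\notin\mathfrak p$. Localize at $f$: the ideal $I_f$ is proper, so the homogeneous ideal $I_f$ in the graded ring $B_f$ lies in a prime, and hence in a homogeneous prime by (a). Contracting gives a homogeneous prime $\mathfrak p\supseteq I$ with $f\notin\mathfrak p$. Since $f\in B_+$, we get $\mathfrak p\not\supseteq B_+$, so $\mathfrak p\in V_+(I)\setminus V_+(J)$. The main obstacle is this step: producing a homogeneous prime avoiding the powers of $f$. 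I handle it with the standard fact that a homogeneous ideal maximal among those disjoint from a multiplicative set of homogeneous elements is prime; the proof of this fact is again the argument from (a).

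Part (c) follows by taking $I=0$ and $J=B$ in (b). We have $\operatorname{Proj}B=V_+(0)=\varnothing$ exactly when $V_+(0)\subseteq V_+(B)=\varnothing$, and by (b) this holds iff $B_+\subseteq\sqrt{0}$, i.e. every element of $B_+$ is nilpotent. This is what "nilpotent" means here, namely that $B_+$ is a nil ideal; if $B_+$ is finitely generated, this is equivalent to $B_+$ being nilpotent as an ideal.
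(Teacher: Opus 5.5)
Your proposal is correct. Part (a) is the ring argument the paper itself sketches in the remark after Example~\ref{example: homogenization}: pass modulo $I^h$, subtract off the homogeneous components already in $I^h$, then compare top-degree components. Your opening phrase about reducing to homogeneous $a,b$ misdescribes this step, since what you actually do is discard the components lying in $I^h$, but the argument itself is fine. The paper gives no proof of (b) and (c), treating them as standard, and your arguments for them are the standard ones.

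There are two small comments.

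First, in the converse of (b) you apply (a) to $B_f$, which is only $\mathbb{Z}$-graded. The leading-term argument works verbatim there, but you can skip the localization entirely. If $g\in (J\cap B_+)\setminus\sqrt{I}$, pick a prime $\mathfrak q\supseteq I$ with $g\notin\mathfrak q$. Then $\mathfrak q^h$ is prime by (a), contains the homogeneous ideal $I$, and misses $g$, so $\mathfrak q^h\in V_+(I)\setminus V_+(J)$. This version does not even need $g$ to be homogeneous.

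Second, your caveat in (c) is genuinely needed, not cosmetic. If ``nilpotent'' meant $B_+^N=0$ for some $N$, the statement would be false. For example, take $B=k[x_1,x_2,\dots]/(x_i^{i+1}: i\geq 1)$ with $\deg x_i=1$. Every element of $B_+$ is nilpotent, hence $\operatorname{Proj}B=\varnothing$, yet $0\neq x_N^N\in B_+^N$ for every $N$. So (c) must be read as ``$B_+$ is a nil ideal,'' which is exactly what you prove.
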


We now ask whether the same statement holds for prime ideals in $\mathbb B[x,y]$. Let's equip $S=\mathbb B[x,y]$ with the standard grading by total degree, and let $S_d$ be the set of homogeneous polynomials of degree $d$. If $I\subseteq S$ is an ideal, define its associated homogeneous ideal by 
\[
I^h:=\bigoplus_{d\geq 0}(I\cap S_d).
\]
Hence, $F=\sum_d F_d \in I^h$ if and only if every homogeneous component $F_d$ lies in $I$.

Different from the classical case for rings, the associated homogeneous ideal of a prime ideal need not be prime for semirings as the following example shows. 

\begin{example}\label{example: homogenization}
Let $A=\{1\}\subseteq \mathbb N$. Consider the following map:
\begin{equation}
\pi_{1,2}:\mathbb B[x,y]\to \mathbb B[t], \quad x\mapsto t, \quad y \mapsto t^2.  
\end{equation}
Since $A$ is a prime subset, the ideal $J_A\subseteq \mathbb B[t]$ as in \eqref{eq: JA} is prime. To be precise, from \eqref{eq: J_A}, one has
\[
J_A=\angles{\{1+t+t^m \mid m\geq 0\}}.
\]
Hence its contraction $\mathfrak P=\pi_{1,2}^{-1}(J_A)$ is a prime ideal of $\mathbb B[x,y]$. For a nonzero polynomial $F \in \mathbb{B}[x,y]$, set
\begin{equation}
 T_{1,2}(F)=\{i+2j \mid (i,j)\in \operatorname{Supp}(F)\}\quad \text{and} \quad \mu_{1,2}(F)=\min T_{1,2}(F).  
\end{equation}
It follows from Lemma \ref{lemma: membership lemma}, we have 
\begin{equation}
F\in\mathfrak P \iff 1 \in \{a-\mu_{1,2}(F) \mid a \in T_{1,2}(F)\}.
\end{equation}
Equivalently, $F\in\mathfrak P$ if and only if $T_{1,2}(F)$ contains two values differing by $1$, one of which is the minimum.

Now let $H\in \mathbb B[x,y]_d$ be homogeneous of total degree $d$, say
\[
H=\sum_{i\in S}x^iy^{d-i}
\]
for some $S\subseteq \{0,1,\ldots,d\}$. Then, we have
\[
T_{1,2}(H)=\{i+2(d-i)\mid i\in S\}=\{2d-i\mid i\in S\},
\]
and hence we have
\[
\mu_{1,2}(H)=2d-\max S\quad \text{and} \quad \{a-\mu_{1,2}(H) \mid a \in T_{1,2}(H)\}=\{\max S-i \mid i\in S\}.
\]
Therefore $H\in\mathfrak P$ if and only if $\max S-1\in S$. Thus the homogeneous ideal
\[
\mathfrak P^h=\bigoplus_{d\geq 0}(\mathfrak P\cap \mathbb B[x,y]_d)
\]
is characterized by the condition that every nonzero homogeneous component contains its top two $x$-exponents consecutively. More explicitly, with $\mathfrak P\cap \mathbb B[x,y]_0=\{0\}$,
\[
\mathfrak P\cap \mathbb B[x,y]_d=\big\{\sum_{i\in S}x^iy^{d-i} \mid S\subseteq\{0,\ldots,d\},\ \max S-1\in S\cup\{0\}\big\}.
\]
For example, $x+y\in\mathfrak P^h$ and $x^2+xy\in\mathfrak P^h$, whereas $x^2+y^2\notin\mathfrak P^h$.

We now show that $\mathfrak P^h$ is not prime. Let $f=1+x+y$ and $g=x+y+y^2$. Then $f\notin\mathfrak P^h$, since its degree-zero component is $1$ and $1\notin\mathfrak P$. Also, $g\notin\mathfrak P^h$, since its degree-two component is $y^2$ and $y^2\notin\mathfrak P$. However, in $\mathbb B[x,y]$ we have
\[
fg=x+y+x^2+xy+y^2+xy^2+y^3.
\]
Its homogeneous components are $x+y$, $x^2+xy+y^2$, and $xy^2+y^3$, and each satisfies the consecutive-top-exponent condition above. Hence $fg\in\mathfrak P^h$, while $f,g\notin\mathfrak P^h$. Therefore $\mathfrak P^h$ is not prime.
\end{example}

The failure occurs exactly at the point where the ring proof uses subtraction. In the ring proof, if $a=\sum_i a_i$ is not in $I^h$, one can pass modulo $I^h$ and discard any homogeneous components already lying in $I^h$; in particular, one may arrange that the top nonzero homogeneous component of the chosen representative is not in $I^h$. This is done by subtracting off the unwanted homogeneous components. In $\mathbb B[x,y]$ one cannot subtract such components. Thus $a\notin I^h$ only says that at least one homogeneous component of $a$ is not in $I$, not that the top homogeneous component has this property. The leading-term argument for primeness of $I^h$ therefore breaks down.

If we only consider ideals which are subtractive, then the original proof used in the case of rings still holds, so we see the homogenization of a subtractive prime ideal is prime.

\begin{pro}
Let $S$ be a graded semiring and $I$ be a subtractive prime ideal of $S$. Then the associated homogeneous ideal $I^h$ is a prime ideal. 
\end{pro}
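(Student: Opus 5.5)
We adapt the classical leading-term argument. Subtractivity of $I$ is used to remove the homogeneous components that already lie in $I$, which is exactly where the argument fails in general (Example \ref{example: homogenization}). Throughout, $S=\bigoplus_{d\ge 0}S_d$ and $I^h=\bigoplus_d (I\cap S_d)$. We may assume $S$ has no zero divisors, as in $\mathbb{B}[x_1,\dots,x_n]$. This assumption keeps the top-degree product nonzero, although it is not strictly needed since $0\in I$.

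\textbf{Step 1: $I^h$ is a proper ideal.} It is an ideal because $I\cap S_d$ is closed under addition and $S_e(I\cap S_d)\subseteq I\cap S_{d+e}$. It is proper because $I^h\subseteq I$.

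\textbf{Step 2: normalize the factors.} Suppose $ab\in I^h$ with $a,b\notin I^h$, and write $a=\sum_i a_i$ and $b=\sum_j b_j$ in homogeneous components. Let $a'$ be the sum of the components $a_i\notin I$, and define $b'$ similarly. These are nonzero since $a,b\notin I^h$. Set $a''=a-a'$ (meaning the sum of the remaining components, which lies in $I^h$), and define $b''$ similarly. Then
\[
ab=a'b'+\bigl(a'b''+a''b'+a''b''\bigr),
\]
and the bracketed term lies in $I^h$. Now $ab\in I^h\subseteq I$ and the bracketed term $c$ lies in $I$. If $a'b'+c=ab$ literally as elements, subtractivity of $I$ gives $a'b'\in I$.

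What we actually need is degree-wise information. Fix a degree $k$. The degree-$k$ component of $ab$ equals $(a'b')_k+c_k$, where $(ab)_k\in I$ and $c_k\in I$ because $c\in I^h$. Subtractivity then gives $(a'b')_k\in I$ for every $k$. Hence $a'b'\in I^h$.

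\textbf{Step 3: compare top degrees.} Let $p$ and $q$ be the top degrees of $a'$ and $b'$. The degree-$(p+q)$ component of $a'b'$ is $a'_pb'_q$, which lies in $I$ by Step 2. Since $I$ is prime, $a'_p\in I$ or $b'_q\in I$. Either conclusion contradicts the choice of $a'$ and $b'$, all of whose nonzero components lie outside $I$. Hence $I^h$ is prime.

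\textbf{Main obstacle.} The delicate point is Step 2: we must make sure subtractivity is applied to genuine homogeneous components in $I$, and not to $I^h$, which may itself fail to be subtractive. Applying it degree by degree to $(ab)_k=(a'b')_k+c_k$ avoids this issue, because it uses only subtractivity of $I$.
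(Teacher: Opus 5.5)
Your argument is correct and is essentially the paper's leading-term proof. The paper takes $r,s$ maximal with $a_r,b_s\notin I$ and applies subtractivity only once, in degree $r+s$, to $(ab)_{r+s}=a_rb_s+c$; your $a'_pb'_q$ is exactly that $a_rb_s$, so applying subtractivity in every degree is more than needed but harmless, and the aside about assuming $S$ has no zero divisors is not a valid reduction in general but is never used and can simply be deleted.
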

\begin{proof}
The set $I^h$ is a homogeneous ideal and $I^h\subseteq I$, so $I^h$ is proper. Let $a,b\in S$ with $ab\in I^h$, and suppose for contradiction that $a,b\notin I^h$. Write $a=\sum_{i=0}^n a_i$ and $b=\sum_{j=0}^m b_j$, with $a_i\in S_i$ and $b_j\in S_j$. Since $a,b\notin I^h$, we may choose 
\[
r=\max\{i \mid a_i\notin I\} \quad \text{and} \quad s=\max\{j \mid b_j\notin I\}.
\]
Then $a_i\in I$ for all $i>r$ and $b_j\in I$ for all $j>s$. The homogeneous component of $ab$ of degree $(r+s)$ is
\begin{equation}
(ab)_{r+s}=a_rb_s+c,\quad \text{where } c:=\sum_{{i+j=r+s;(i,j)\neq(r,s)}}a_i b_j.
\end{equation}
For every summand $a_i b_j$ appearing in $c$, either $i>r$ or $j>s$, hence $a_i\in I$ or $b_j\in I$, and therefore $a_i b_j\in I$; thus $c\in I$. Since $ab\in I^h$, its degree $(r+s)$ homogeneous component lies in $I$, so $a_rb_s+c\in I$. Since $I$ is subtractive and $c\in I$, it follows that $a_rb_s\in I$. Since $I$ is prime, either $a_r\in I$ or $b_s\in I$, contradicting the choice of $r$ and $s$. Hence $a\in I^h$ or $b\in I^h$, and so $I^h$ is prime.
\end{proof}

We remark here that by a similar argument the associated homogeneous ideal $I^h$ of a prime ideal, given by homogenizing elements in the classical way, need not be prime either.

\begin{example}
\label{example: homogen2}
Let $A={1}\subseteq\mathbb N$, let $J_A\subseteq\mathbb B[t]$ be the corresponding prime ideal, and consider $ \pi_{1,2}:\mathbb B[x,y]\longrightarrow\mathbb B[t],\ x\mapsto t,\ y\mapsto t^2.$ Then $ \mathfrak P:=\pi_{1,2}^{-1}(J_A) $ is a prime ideal of $\mathbb B[x,y]$. Introduce a new variable $x_0$ and let $ \mathfrak P^{\operatorname{hom}} := \left\langle F^{\operatorname{hom}}\mid F\in\mathfrak P\right\rangle \subseteq\mathbb B[x_0,x,y]. $

Set $ f=1+x+y,\ g=x+y+y^2.$ As in Example~\ref{example: homogenization}, $ fg=(x+y)+(x^2+xy+y^2)+(xy^2+y^3), $ and each of the three displayed homogeneous components lies in $\mathfrak P$. Since they are already homogeneous, they lie in $\mathfrak P^{\operatorname{hom}}$, and hence $ fg\in\mathfrak P^{\operatorname{hom}}. $

On the other hand, $\mathfrak P^{\operatorname{hom}}$ is homogeneous. Thus, if $f\in\mathfrak P^{\operatorname{hom}}$, then its degree-zero component $1$ would lie in $\mathfrak P^{\operatorname{hom}}$. Likewise, if $g\in\mathfrak P^{\operatorname{hom}}$, then its degree-two component $y^2$ would lie in $\mathfrak P^{\operatorname{hom}}$. Dehomogenizing by setting $x_0=1$ sends $\mathfrak P^{\operatorname{hom}}$ into $\mathfrak P$, which would imply respectively that $1\in\mathfrak P$ or $y^2\in\mathfrak P$, both impossible. Therefore $f,g\notin\mathfrak P^{\operatorname{hom}}, \  fg\in\mathfrak P^{\operatorname{hom}}, $ so $\mathfrak P^{\operatorname{hom}}$ is not prime. 
\end{example}

This, being the construction used in the classical way of constructing the projective closure $V(P) \rightsquigarrow V_+(P^{hom})$, provides motivation for the use of congruences rather than ideals, as mentioned in the introduction.

We next record a positive result in the opposite direction: dehomogenizing a homogeneous prime preserves primeness.

\begin{pro}
\label{pro: dehomogenization prime}
Let
$S=\mathbb B[x_0,\ldots,x_n]$
with the standard grading, and let $P\subseteq S$ be a homogeneous
prime ideal. Fix $i\in\{0,\ldots,n\}$ and assume that $x_i\notin P$.
Set
$A_i:=\mathbb B[x_0,\ldots,\widehat{x_i},\ldots,x_n],$
and let
$\Phi_i:S\longrightarrow A_i$
be the dehomogenization homomorphism determined by
$\Phi_i(x_i)=1
\ \text{and}\ 
\Phi_i(x_j)=x_j , \ (j\neq i).$
Then $\Phi_i(P)$ is a prime ideal of $A_i$.
\end{pro}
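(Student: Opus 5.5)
The plan is to reduce everything to the criterion
\[
f\in\Phi_i(P)\iff f^h\in P \qquad (f\in A_i),
\]
where $f^h$ denotes homogenization of $f$ with respect to the variable $x_i$. I would then check primeness through the multiplicativity of homogenization.

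First, $\Phi_i(P)$ is an ideal. It is closed under addition because $\Phi_i$ is additive. Since $\Phi_i$ is surjective, every $r\in A_i$ is $\Phi_i(R)$ for some $R\in S$, so $r\,\Phi_i(F)=\Phi_i(RF)$ with $RF\in P$.

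Second, I would record that a homogeneous ideal of $S$ contains the homogeneous components of each of its elements. Any $F\in P$ can be written as $\sum_k r_kg_k$ with the $g_k$ homogeneous generators. The degree-$d$ part of $F$ is then $\sum_k (r_k)_{d-\deg g_k}\,g_k\in P$. This holds because there is no cancellation, and it is the ideal analogue of Theorem \ref{theorem: congruence homogeneous elts}.

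Now let $f=\Phi_i(F)$ with $F=\sum_d F_d\in P$ and $D=\max\{d\mid F_d\neq 0\}$. Set
\[
G:=\sum_d x_i^{D-d}F_d .
\]
Each summand lies in $P$, so $G\in P$. Moreover $G$ is homogeneous of degree $D$, and $\Phi_i(G)=\sum_d\Phi_i(F_d)=f$, since supports are combined by union in the idempotent setting.

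The key combinatorial step is that $G=x_i^{D-\deg f}f^h$. This holds because $\Phi_i$ is injective on monomials of a fixed total degree. A degree-$D$ monomial is determined by its exponents in the variables other than $x_i$. Hence $\operatorname{Supp}(G)$ is exactly the set of degree-$D$ lifts of the monomials of $f$, and that set is $\operatorname{Supp}(x_i^{D-\deg f}f^h)$. Since $x_i\notin P$ and $P$ is prime, it follows that $f^h\in P$. The converse direction is immediate, because $\Phi_i(f^h)=f$. I expect this step, replacing an arbitrary preimage by a homogeneous one and identifying it with a power of $x_i$ times $f^h$, to be the main point needing care. It is the only place where the homogeneity of $P$ and the hypothesis $x_i\notin P$ are used.

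Finally, I would conclude primeness. Properness holds because $1^h=1\notin P$. For $f,g\in A_i$, there is no cancellation over $\mathbb B$, so $\deg(fg)=\deg f+\deg g$ and $(fg)^h=f^hg^h$. Thus $fg\in\Phi_i(P)$ gives $f^hg^h\in P$. Primeness of $P$ then gives $f^h\in P$ or $g^h\in P$, that is, $f\in\Phi_i(P)$ or $g\in\Phi_i(P)$.
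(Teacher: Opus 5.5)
Your proof is correct and follows essentially the same route as the paper: you lift to the homogeneous element $\sum_d x_i^{D-d}F_d\in P$, identify it with $x_i^{D-\deg f}f^{h}$, cancel the power of $x_i$ using $x_i\notin P$, and conclude via $(fg)^h=f^hg^h$. The differences are only organizational. You isolate the criterion $f\in\Phi_i(P)\iff f^h\in P$ and deduce properness from it, whereas the paper argues properness directly via pure powers of $x_i$. You should also dispose of the trivial case $f=0$ or $g=0$ before invoking $\deg(fg)=\deg f+\deg g$, as the paper does.
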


\begin{proof}
Since $\Phi_i$ is surjective, the image $\Phi_i(P)$ is an ideal
of $A_i$. Indeed, if
$f=\Phi_i(F),\ g=\Phi_i(G)$
for $F,G\in P$, then
$f+g=\Phi_i(F+G)\in\Phi_i(P).$
Moreover, if $h\in A_i$, choose $H\in S$ with
$\Phi_i(H)=h$. Then
$hf=\Phi_i(HF)\in\Phi_i(P).$

We first show that $\Phi_i(P)$ is proper. Suppose, toward a
contradiction, that $1\in\Phi_i(P)$. Then there exists $F\in P$
such that
$\Phi_i(F)=1.$
Write $F$ as a sum of its homogeneous components:
$F=\sum_{d\geq 0}F_d.$
Since $P$ is homogeneous, every homogeneous component $F_d$
belongs to $P$.

The equality $\Phi_i(F)=1$ implies that every monomial appearing in
$F$ is a pure power of $x_i$. Consequently, every nonzero
homogeneous component $F_d$ is equal to $x_i^d$. Since $F\neq0$,
we therefore have
$x_i^d\in P$
for some $d\geq0$. If $d=0$, then $1\in P$, contradicting the
properness of $P$. If $d>0$, primeness of $P$ implies
$x_i\in P$, contrary to the hypothesis. Hence $\Phi_i(P)$ is
proper.

Now suppose that
$fg\in\Phi_i(P)$
for $f,g\in A_i$. If $f=0$ or $g=0$, then one of the factors
already belongs to $\Phi_i(P)$, so assume that $f\neq0$ and
$g\neq0$. Choose $F\in P$ such that
$\Phi_i(F)=fg.$
Write
\[
F=\sum_{d\in D}F_d,
\]
where each $F_d$ is homogeneous of degree $d$ and nonzero. Since
$P$ is homogeneous, $F_d\in P$ for every $d\in D$. Let
\[
N=\max D \quad \textrm{and} \quad H=\sum_{d\in D}x_i^{N-d}F_d,
\]
in particular, $H$ is homogeneous
of degree $N$. Moreover, $H\in P$, and
\[
\Phi_i(H) = \sum_{d\in D}\Phi_i(F_d) = \Phi_i(F) = fg.
\]

Let
$r=\deg(f),\ s=\deg(g),$
and let $f^{h_i}$ and $g^{h_i}$ denote their homogenizations with
respect to $x_i$. Explicitly, with vector notation, if
$f=\sum_{\alpha}\textbf{x}^\alpha,$
where the variables in $\textbf{x}^\alpha$ exclude $x_i$, then
\[
f^{h_i} = \sum_{\alpha}
x_i^{r-|\alpha|}\textbf{x}^\alpha,
\]
where $|\alpha|=\alpha_0+\dots + \alpha_n$ if $\alpha=(\alpha_0,\dots,\alpha_n)$, and similarly for $g^{h_i}$.

Since $f$ and $g$ are nonzero,
$\deg(fg)=r+s,$
and homogenization is multiplicative:
$(fg)^{h_i}=f^{h_i}g^{h_i}.$
Also, a homogeneous polynomial of degree $N$ is uniquely determined
by its dehomogenization with respect to $x_i$. Since $H$ is
homogeneous of degree $N$ and dehomogenizes to $fg$, it follows
that
\[
H = x_i^{N-r-s}(fg)^{h_i} = x_i^{N-r-s}f^{h_i}g^{h_i}.
\]
In particular,
$x_i^{N-r-s}f^{h_i}g^{h_i}\in P.$

Because $x_i\notin P$, no positive power of $x_i$ belongs to
$P$; moreover, $1\notin P$. Thus
$x_i^{N-r-s}\notin P.$
Primeness of $P$ therefore gives
$f^{h_i}g^{h_i}\in P.$
Applying primeness once more, we obtain
$f^{h_i}\in P
\ \text{or}\ 
g^{h_i}\in P.$
Dehomogenizing yields
$f=\Phi_i(f^{h_i})\in\Phi_i(P)
\ \text{or}\ 
g=\Phi_i(g^{h_i})\in\Phi_i(P).$
Therefore $\Phi_i(P)$ is prime.
\end{proof}

\section{Projective closure for Ideals and Congruences}

In this section, we construct a projective space $\mathbb{P}_{\mathbb{B}}^n$ over $\mathbb{B}$ by using prime congruences. We then consider the projective closure in this setting. We will only view them as topological spaces, i.e., we do not consider structure sheaves on them. In what follows, all polynomial semirings $S=\mathbb B[x_0,x_1,\ldots,x_n]$ will be considered with the standard grading $\deg(x_i)=1$. We begin with the following definition. We emphasize that much of this section generalized to the case of an idempotent semifield $K$ instead of $\mathbb{B}$, but we restrict ourselves to the case of $\mathbb{B}$ to avoid unnecessary technical complication.

\begin{mydef}\label{definition: U_i}
For $S=\mathbb B[x_0,\ldots,x_n]$ with the standard grading, define the standard affine congruence charts by
$U_i:=\operatorname{SpecCong}((S_{x_i})_0),\ 0\leq i\leq n,$
where $(S_{x_i})_0$ is the degree-zero localization as in Definition \ref{definition: degree-zero}.
\end{mydef}

For $S=\mathbb B[x_0,\ldots,x_n]$ the degree-zero localization $(S_{x_i})_0$ is naturally isomorphic to the semiring
$\mathbb B\left[\frac{x_0}{x_i},\ldots,\widehat{\frac{x_i}{x_i}},\ldots,\frac{x_n}{x_i}\right],$
so each $U_i$ is just $\mathbb{A}_{\mathbb{B}}^n$, defined in Definition \ref{definition: prime congreunce spectra}. In particular, on the distinguished chart $U_0$ one has
\begin{equation}\label{eq: identification}
(S_{x_0})_0\cong\mathbb B\left[\frac{x_1}{x_0},\ldots,\frac{x_n}{x_0}\right]\cong\mathbb B[t_1,\ldots,t_n], \quad \text{where } t_j=\frac{x_j}{x_0}.
\end{equation}

Now, we construct the projective space $\Proj$ (as a topological space) by properly gluing these charts. For $i\neq j$, the common overlap of the $i$th and $j$th charts is
\[
U_{ij}:=\operatorname{SpecCong}((S_{x_ix_j})_0).
\]
Equivalently, $U_{ij}$ is obtained from the $i$th chart by inverting the degree-zero coordinate $x_j/x_i$, or from the $j$th chart by inverting $x_i/x_j$. The two descriptions agree under the identification
\begin{equation}
 \frac{x_i}{x_j}=\left(\frac{x_j}{x_i}\right)^{-1}.   
\end{equation}
More generally, the corresponding identifications on higher intersections are compatible because iterated localization is independent of the order of localization. We define the congruence-theoretic analogue of projective $n$-space by gluing the affine congruence spectra $U_0,\ldots,U_n$ along these standard overlaps:

\begin{mydef}
With the same notation as above, we define
\[
\Proj:=(\bigsqcup_{i=0}^n U_i)/\sim,
\]
with $U_i$ and $U_j$ identified along $U_{ij}$ by the localization isomorphisms above, with the quotient topology. 
\end{mydef}

By Theorem~\ref{theorem: Tanaka} (2), each subset $D(x_j/x_i)$ of $U_i$ is open and the overlap
$U_{ij}$ is identified with
$D(x_j/x_i)\subseteq U_i$, and similarly with
$D(x_i/x_j)\subseteq U_j$. Consequently, the images of the standard
charts $U_i$ form an open cover of $\Proj$.

Now, we define the projective closure of an ideal $P \subseteq \mathbb{B}[x_1,\dots,x_n]$ in $\Proj$. We first need a couple of lemmas. 

For an ideal $P\subseteq \mathbb{B}[x_1,\dots,x_n]$, we denote the homogeneous bend congruence of $P$ by
$\operatorname{Bend}(P)^h$, recalled in Section \ref{subsection: congrouence}. As in the ring case, we saturate $\operatorname{Bend}(P)^h$  with respect to $x_0$. 

\begin{mydef}
If $C$ is a congruence on $S=\mathbb{B}[x_0,\dots,x_n]$ and $x_i$ is one of the homogeneous coordinates, define
\begin{equation}
 (C:x_i^\infty):=\{(F,G)\in S\times S:\exists N\geq0\text{ such that }(x_i^NF,x_i^NG)\in C\}.  
\end{equation}
\end{mydef}

The following shows that $(C:x_i^\infty)$ is again a congruence.

\begin{lem}\label{lemma: saturation is cong}
Let $C$ be a homogeneous congruence on $S$ and $x_0$ be one of the homogeneous coordinates. Then $(C:x_0^\infty)$ is a homogeneous congruence on $S$. 
\end{lem}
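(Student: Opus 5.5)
We verify the congruence axioms for $(C:x_0^\infty)$ directly, and then establish homogeneity separately, since that is the only part that needs more than bookkeeping. For the equivalence relation: reflexivity is immediate with $N=0$, since $C$ contains the diagonal. Symmetry follows from the symmetry of $C$. For transitivity, suppose $(x_0^NF,x_0^NG)\in C$ and $(x_0^MG,x_0^MH)\in C$. Multiplying the first pair by $(x_0^M,x_0^M)$ and the second by $(x_0^N,x_0^N)$ uses compatibility with multiplication (pairing with the diagonal element $x_0^M\sim x_0^M$). This gives $(x_0^{N+M}F,x_0^{N+M}G),(x_0^{N+M}G,x_0^{N+M}H)\in C$, so $(x_0^{N+M}F,x_0^{N+M}H)\in C$ by transitivity of $C$. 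Compatibility is handled the same way: given $(F,G)$ with exponent $N$ and $(F',G')$ with exponent $M$, raise both to the common exponent $N+M$. Adding the two pairs gives $(x_0^{N+M}(F+F'),x_0^{N+M}(G+G'))\in C$. Multiplying them gives $(x_0^{2(N+M)}FF',x_0^{2(N+M)}GG')\in C$, so the product pair lies in the saturation with exponent $2(N+M)$. Clearly $C\subseteq (C:x_0^\infty)$.

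For homogeneity, the goal is to show that $(C:x_0^\infty)$ is generated by the pairs $(F_d,G_d)$ of homogeneous components of its elements. First, for $(F,G)\in(C:x_0^\infty)$ with $(x_0^NF,x_0^NG)\in C$, Theorem~\ref{theorem: congruence homogeneous elts} applies because $C$ is homogeneous. Since multiplication by $x_0^N$ shifts degrees uniformly, $(x_0^NF)_{d+N}=x_0^NF_d$, and the theorem gives $(x_0^NF_d,x_0^NG_d)\in C$ for every $d$. Hence $(F_d,G_d)\in(C:x_0^\infty)$ for every $d$.

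Second, let $C'$ be the congruence generated by all degree-preserving pairs of the form $(F_d,G_d)$ with $(F,G)\in(C:x_0^\infty)$. We have $C'\subseteq (C:x_0^\infty)$ by the first step. For the reverse inclusion, let $(F,G)\in(C:x_0^\infty)$. Then $(F_d,G_d)\in C'$ for each $d$. Summing over $d$ using additive compatibility of $C'$ gives $(F,G)=(\sum_d F_d,\sum_d G_d)\in C'$. So $(C:x_0^\infty)=C'$ is generated by homogeneous relations of equal degree, and it is therefore homogeneous.

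The main obstacle is this homogeneity step: the definition of ``homogeneous'' is in terms of generators, so we must know that components of saturated pairs remain saturated. This is exactly where Theorem~\ref{theorem: congruence homogeneous elts} (Maclagan--Rinc\'on) is needed, combined with the observation that multiplication by $x_0^N$ commutes with taking homogeneous components up to a degree shift. One small point to check is the edge case where a component is zero on one side, say $F_d=0\neq G_d$. The pair $(0,G_d)$ is still a degree-$d$ relation, since $0$ is homogeneous of every degree, so the generating set remains degree-preserving.
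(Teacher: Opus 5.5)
Your proof is correct and follows the paper's argument essentially step for step: you verify the congruence axioms by passing to a common power of $x_0$, and you get homogeneity by applying Theorem~\ref{theorem: congruence homogeneous elts} to $(x_0^NF,x_0^NG)$. Two points you add make explicit what the paper leaves implicit: the degree-shift identity $(x_0^NF)_{d+N}=x_0^NF_d$, and the argument that $(C:x_0^\infty)$ is generated by the component pairs $(F_d,G_d)$.
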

\begin{proof}
Reflexivity and symmetry are immediate. If $(F,G),(G,H)\in (C:x_0^\infty)$, choose $N,M\geq0$ such that 
\begin{equation}\label{eq: inside}
 (x_0^NF,x_0^NG), (x_0^MG,x_0^MH)\in C.   
\end{equation}
Multiplying the first relation by $x_0^M$ and the second by $x_0^N$ gives 
\begin{equation}
(x_0^{N+M}F,x_0^{N+M}G)\in C, \quad \text{and} \quad (x_0^{N+M}G,x_0^{N+M}H)\in C,
\end{equation}
so transitivity of $C$ gives $(x_0^{N+M}F,x_0^{N+M}H)\in C$ and hence $(F,H)\in (C:x_0^\infty)$. 

Compatibility with addition and multiplication is similar: if $(F,G),(F',G')\in (C:x_0^\infty)$, then for some $N$ and $M$, we have
\[
(x_0^NF,x_0^NG), (x_0^MF',x_0^MG')\in C. 
\]
Since $C$ is a congruence, it follows that
\[
(x_0^{N+M}FF',x_0^{N+M}GG') \in C,
\]
implying that $(FF',GG') \in (C:x_i^\infty)$. 

For addition, since $(x_0^K,x_0^K) \in C$ for any $K \in \mathbb{N}$, we have
\[
(x_0^{N+M}F,x_0^{N+M}G), (x_0^{N+M}F',x_0^{N+M}G')\in C, 
\]
implying that
\begin{equation}
 (x_0^{N+M}(F+F'),x_0^{N+M}(G+G'))\in C.  
\end{equation}
Hence $(F+F',G+G')\in (C:x_0^\infty)$.

Finally, we show that saturation preserves homogeneity. Suppose $(F,G)\in (C:x_0^\infty)$ and write $F=\sum_dF_d$ and $G=\sum_dG_d$ into homogeneous components. For some $N\geq0$ one has $(x_0^NF,x_0^NG)\in C$. Since $C$ is homogeneous, by Theorem \ref{theorem: congruence homogeneous elts}, $(x_0^NF_d,x_0^NG_d)\in C$ for every $d$, and therefore $(F_d,G_d)\in (C:x_0^\infty)$ for every $d$. Hence $(C:x_0^\infty)$ is homogeneous congruence on $S$.
\end{proof}

Among the homogeneous congruences on $\mathbb{B}[x_0,\dots,x_n]$, we will be particularly interested in the following case.

\begin{definition} \label{definition: bendh}
Let
$A=\mathbb B[x_1,\ldots,x_n], \ S=\mathbb B[x_0,\ldots,x_n],$
and let $P\subseteq A$ be an ideal. Define
\[
\operatorname{Bend}(P)^h
:=
\left\langle
(F\sim G)^h \mid 
(F,G)\in\operatorname{Bend}(P)
\right\rangle,
\]
which is a homogeneous congruence on $S$ by viewing $F$ and $G$ in $S$. We set
\[
C_P^{\operatorname{proj}}
:=
\left(\operatorname{Bend}(P)^h:x_0^\infty\right).
\]
\end{definition}

Now, we define the projective vanishing locus $V_+$ of $\Proj$ in a chartwise way as follows.

\begin{mydef}\label{definition: homegenius loci}
Let $C$ be a homogeneous congruence on $S=\mathbb{B}[x_0,\dots,x_n]$. We define $V_+(C)\subseteq\mathbb \Proj$ by specifying its intersection with each chart $U_i$ as follows:
\begin{equation}
V_+(C)\cap U_i:=V(C_{(x_i)})=\{Q\in\operatorname{SpecCong}((S_{x_i})_0)\mid C_{(x_i)}\subseteq Q\},    
\end{equation}
where $C_{(x_i)}$ is the degree-zero localization of $C$ at $x_i$ as in Definition \ref{definition: degree-zero}.
\end{mydef}

\begin{rmk}
Definition \ref{definition: homegenius loci} is analogous to the ordinary identity $V_+(I)\cap D_+(f)\cong V((I_f)_0)$ for homogeneous ideals in graded rings. These chartwise loci are compatible on overlaps. Indeed, restricting $C_{(x_i)}$ to $U_{ij}$ amounts to further localizing at $x_j/x_i$, and gives
\[
C_{(x_ix_j)}:=C_{x_ix_j}\cap\bigl((S_{x_ix_j})_0\times(S_{x_ix_j})_0\bigr).
\]
Starting instead from the $j$th chart and localizing at $x_i/x_j$ gives the same congruence $C_{(x_ix_j)}$. Hence $V(C_{(x_i)})$ and $V(C_{(x_j)})$ agree on $U_{ij}$, so the affine closed loci glue to a well-defined subset $V_+(C)\subseteq \Proj$.    
\end{rmk}

Now, we define the projective closure of an ideal. 

\begin{mydef}
Let $P$ be an ideal of $\mathbb{B}[x_1,\dots,x_n]$ and $\operatorname{Bend}(P)^h$ be the homogeneous congruence on $\mathbb{B}[x_0,\dots,x_n]$ as in Definition \ref{definition: bendh}. The \emph{projective closure} $\text{Proj}(P)$ of $P$ is defined by 
\[
\operatorname{Proj}(P):=V_+(C_P^{\operatorname{proj}})\subseteq \Proj.
\]
\end{mydef}

The following lemma shows that the previous definitions are compatible with the affine charts. For a homomorphism of semirings $\varphi:R_1 \to R_2$ and a congruence $C$ on $R_1$, by $\varphi(C)$, we mean the following set:
\[
\varphi(C):=\{(\varphi(a),\varphi(b) \in R_2 \times R_2 \mid (a,b) \in C\}.
\]

Let $S=\mathbb B[x_0,\ldots,x_n]$ with the standard grading, and let $P\subseteq A$ be an ideal. Let
\begin{equation}\label{eq: dehomo iso}
\theta:(S_{x_0})_0\longrightarrow A=\mathbb{B}[x_1,\dots,x_n], \quad \frac{x_i}{x_0} \mapsto x_i
\end{equation}
be the dehomogenization isomorphism.

\begin{lem}\label{lemma: main lemma}
With the same notation as above, we have
\[
\theta\bigl((C_P^{\operatorname{proj}})_{(x_0)}\bigr)=\operatorname{Bend}(P).
\]
Consequently, under the identification $(S_{x_0})_0\cong A$ and $U_0\cong\operatorname{SpecCong}(A)$, 
\[
\operatorname{Proj}(P)\cap U_0=V(\operatorname{Bend}(P)).
\]
\end{lem}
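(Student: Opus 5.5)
We prove the two inclusions $\theta\bigl((C_P^{\operatorname{proj}})_{(x_0)}\bigr)\subseteq\operatorname{Bend}(P)$ and $\operatorname{Bend}(P)\subseteq\theta\bigl((C_P^{\operatorname{proj}})_{(x_0)}\bigr)$. The statement about $\operatorname{Proj}(P)\cap U_0$ then follows immediately from Definition \ref{definition: homegenius loci}, transported along the isomorphism $\theta$ of \eqref{eq: dehomo iso}. Throughout we use the description of localized congruences in Theorem \ref{theorem: Tanaka}(4). An element of $(C_P^{\operatorname{proj}})_{x_0}$ has the form $(F/x_0^k,G/x_0^k)$ with $(F,G)\in C_P^{\operatorname{proj}}$, and it lies in degree zero exactly when its components do. Since $C_P^{\operatorname{proj}}$ is homogeneous (Lemma \ref{lemma: saturation is cong}), Theorem \ref{theorem: congruence homogeneous elts} lets us take $F,G$ homogeneous of degree $k$.

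For the inclusion $\supseteq$, take $(f,g)\in\operatorname{Bend}(P)$. By definition $(f\sim g)^h=(x_0^{D-\deg f}f^h,\;x_0^{D-\deg g}g^h)$ lies in $\operatorname{Bend}(P)^h\subseteq C_P^{\operatorname{proj}}$, and both sides are homogeneous of degree $D$. Dividing by $x_0^D$ gives a degree-zero pair in $(C_P^{\operatorname{proj}})_{(x_0)}$, and $\theta$ sends it to $(f,g)$, since dehomogenizing kills the extra powers of $x_0$. The zero-side conventions are handled separately but in the same way.

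For the inclusion $\subseteq$, let $\Phi:S\to A$ be dehomogenization ($x_0\mapsto 1$). On degree-zero fractions, $\theta(F/x_0^k)=\Phi(F)$ for $F$ homogeneous of degree $k$, so it suffices to show $\Phi(C_P^{\operatorname{proj}})\subseteq\operatorname{Bend}(P)$. We argue in three steps.
\begin{enumerate}
\item Consider $\Phi^{-1}(\operatorname{Bend}(P))$, which is a congruence on $S$. It contains every generator $(f\sim g)^h$, because $\Phi(x_0^{D-\deg f}f^h)=f$. Hence $\operatorname{Bend}(P)^h\subseteq\Phi^{-1}(\operatorname{Bend}(P))$.
\item If $(x_0^NF,x_0^NG)\in\operatorname{Bend}(P)^h$, then $(\Phi(F),\Phi(G))=(\Phi(x_0^NF),\Phi(x_0^NG))\in\operatorname{Bend}(P)$. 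So $\Phi^{-1}(\operatorname{Bend}(P))$ also contains the saturation $C_P^{\operatorname{proj}}$.
\item Combining, $\Phi(C_P^{\operatorname{proj}})\subseteq\operatorname{Bend}(P)$, which gives the inclusion.
\end{enumerate}

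The main point to check carefully is that every element of $C_{(x_0)}$ is of the form $F/x_0^k$ with $F$ homogeneous of degree $k$, so that $\theta$ genuinely agrees with $\Phi$ on it. A degree-zero element of $S_{x_0}$ is a fraction whose numerator may a priori have mixed degrees. Equality in the localization only holds up to multiplication by powers of $x_0$, and multiplying by $x_0^m$ preserves homogeneity. So we can reduce to homogeneous representatives: among the components $F_d$ of such a numerator, only the degree-$k$ part survives in the degree-zero component. Combined with Theorem \ref{theorem: congruence homogeneous elts} applied to $C_P^{\operatorname{proj}}$, this gives the representation we need.
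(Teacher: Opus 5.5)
Your proposal is correct and follows essentially the same route as the paper. For $\supseteq$ you homogenize a bend relation and divide by $x_0^D$. For $\subseteq$ you use that $\Phi^{-1}(\operatorname{Bend}(P))$ contains $\operatorname{Bend}(P)^h$ and is unaffected by stripping powers of $x_0$, and you use Theorem~\ref{theorem: Tanaka}(4) to write degree-zero relations as $F/x_0^k\sim G/x_0^k$ with $F,G$ homogeneous of degree $k$. The only cosmetic difference is that you use the description $T^{-1}C=\{(a/t,b/t)\}$ directly rather than the ``there exists $v$'' criterion, so you avoid the extra multiplier $x_0^m$ the paper carries. Your worry about mixed-degree numerators is moot, since multiplication by $x_0$ is injective on $S$.
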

\begin{proof}
First, let $F\sim G$ be a relation in $\operatorname{Bend}(P)$. By definition of congruence homogenization, its homogenization $(F\sim G)^h$ lies in $\operatorname{Bend}(P)^h\subseteq C_P^{\operatorname{proj}}$. After localizing at $x_0$ and passing to degree zero, dehomogenization sends $(F\sim G)^h$ back to $F\sim G$. Indeed, if $D=\max\{\deg(F),\deg(G)\}$, then the homogenized relation is
\begin{equation}
 x_0^{D-\deg(F)}F^h\sim x_0^{D-\deg(G)}G^h.   
\end{equation}
After multiplying both sides by $x_0^{-D}$ in $S_{x_0}$ one obtains a degree-zero relation whose image under $\theta$ is exactly $F\sim G$. Hence
$\operatorname{Bend}(P)\subseteq\theta\bigl((C_P^{\operatorname{proj}})_{(x_0)}\bigr)$.

Conversely, let $\alpha\sim\beta$ be a relation in
$(C_P^{\operatorname{proj}})_{(x_0)}$. Since $\alpha$ and $\beta$ have
degree zero, after choosing a common denominator we may write
\[
\alpha=\frac{F}{x_0^d}
\qquad\text{and}\qquad
\beta=\frac{G}{x_0^d},
\]
where $F$ and $G$ are homogeneous of degree $d$. By Theorem~\ref{theorem: Tanaka} (4), there exists $m\geq0$
such that
$x_0^mF\sim x_0^mG$
lies in $C_P^{\operatorname{proj}}$.

Since
$C_P^{\operatorname{proj}} = \bigl(\operatorname{Bend}(P)^h:x_0^\infty\bigr),$
there exists $N\geq0$ such that
$x_0^{N+m}F\sim x_0^{N+m}G$
lies in $\operatorname{Bend}(P)^h$.

Let
$\delta:S=\mathbb{B}[x_0,\dots,x_n]\longrightarrow A=\mathbb{B}[x_1,\dots,x_n]$
be the dehomogenization homomorphism defined by
$\delta(x_0)=1$ and $\delta(x_i)=x_i$ for $1\leq i\leq n$. The inverse
image
$\delta^{-1}(\operatorname{Bend}(P))$
is a congruence on $S$. Moreover, it contains every homogenized
relation $(H\sim K)^h$ with $H\sim K$ in $\operatorname{Bend}(P)$,
because dehomogenization sends $(H\sim K)^h$ back to $H\sim K$.
Therefore
\[
\operatorname{Bend}(P)^h
\subseteq
\delta^{-1}(\operatorname{Bend}(P)).
\]
Applying $\delta$ to the relation
$x_0^{N+m}F\sim x_0^{N+m}G$ and using $\delta(x_0)=1$, we obtain
$\delta(F)\sim\delta(G)$
in $\operatorname{Bend}(P)$. Since
$\theta(\alpha)=\delta(F) \ \text{and}\  \theta(\beta)=\delta(G),$
it follows that
\[
\theta\bigl((C_P^{\operatorname{proj}})_{(x_0)}\bigr)
\subseteq
\operatorname{Bend}(P).
\]

Thus
$\theta\bigl((C_P^{\operatorname{proj}})_{(x_0)}\bigr)=\operatorname{Bend}(P)$.
By the chartwise definition of $V_+$, we have
\[
\operatorname{Proj}(P)\cap U_0
=V_+(C_P^{\operatorname{proj}})\cap U_0
=V\bigl((C_P^{\operatorname{proj}})_{(x_0)}\bigr).
\]
Under $U_0\cong\operatorname{SpecCong}(A)$, the equality above identifies this closed locus with $V(\operatorname{Bend}(P))$. Therefore
$\operatorname{Proj}(P)\cap U_0=V(\operatorname{Bend}(P))$.
\end{proof}

\begin{proposition}\label{proposition: closed proposition}
Let $C$ be a homogeneous congruence on
$S=\mathbb B[x_0,\ldots,x_n]$. Then $V_+(C)$ is closed in
$\mathbb P^n_{\mathbb B}$. In particular, for every ideal
$P\subseteq\mathbb B[x_1,\ldots,x_n]$,
$\operatorname{Proj}(P)$ is closed in $\mathbb P^n_{\mathbb B}$.
\end{proposition}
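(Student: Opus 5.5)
The plan is to use the fact that $\mathbb P^n_{\mathbb B}$ carries the quotient topology from $\bigsqcup_i U_i$, and that the images of the $U_i$ form an open cover. A subset $Z\subseteq\mathbb P^n_{\mathbb B}$ is then closed if and only if $Z\cap U_i$ is closed in $U_i$ for every $i$. Indeed, the complement of $Z$ is open exactly when its preimage in each $U_i$ is open, since the quotient map restricted to each $U_i$ is the inclusion of an open subset. I would first record this as a general remark about spaces glued from open charts along open overlaps, using Theorem~\ref{theorem: Tanaka}~(2) to know that the overlaps $D(x_j/x_i)\subseteq U_i$ are open.

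Next, I apply this to $Z=V_+(C)$. By Definition~\ref{definition: homegenius loci}, $V_+(C)\cap U_i=V(C_{(x_i)})$. Here $C_{(x_i)}$ is a congruence on $(S_{x_i})_0$, being the preimage of the congruence $C_{x_i}$ under the inclusion $(S_{x_i})_0\hookrightarrow S_{x_i}$, by the lemma on pullbacks of congruences. By the definition of the Zariski topology (Lemma~\ref{lemma: zariski}), $V(C_{(x_i)})$ is closed in $U_i=\operatorname{SpecCong}((S_{x_i})_0)$. Hence every chart intersection is closed, and $V_+(C)$ is closed.

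The one point needing care is that $V_+(C)$ is a well-defined subset whose preimage in $U_i$ is exactly $V(C_{(x_i)})$. This requires the overlap compatibility: a point of $U_{ij}$ lies in $V(C_{(x_i)})$ if and only if its image in $U_j$ lies in $V(C_{(x_j)})$. This is the content of the remark following Definition~\ref{definition: homegenius loci}, namely that both restrictions yield $C_{(x_ix_j)}$ on $U_{ij}$. I would make this explicit via Theorem~\ref{theorem: Tanaka}~(2),(3): for a prime $Q$ of $(S_{x_i})_0$ not containing $(x_j/x_i,0)$, $Q\supseteq C_{(x_i)}$ holds if and only if its localization at $x_j/x_i$ contains the localized congruence $C_{(x_ix_j)}$. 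I expect this identification of $V(C_{(x_i)})\cap D(x_j/x_i)$ with $V(C_{(x_ix_j)})$ to be the main (though mostly bookkeeping) obstacle. The key step is checking that localizing $C_{(x_i)}$ at $x_j/x_i$ agrees with taking the degree-zero part of $C_{x_ix_j}$, which follows from the description in Theorem~\ref{theorem: Tanaka}~(4).

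Finally, the statement about $\operatorname{Proj}(P)$ follows immediately by taking $C=C_P^{\operatorname{proj}}$. This is a homogeneous congruence by Lemma~\ref{lemma: saturation is cong}, applied to the homogeneous congruence $\operatorname{Bend}(P)^h$.
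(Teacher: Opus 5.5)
Your proposal is correct and matches the paper's proof: the paper also notes that the preimage of $V_+(C)$ under the quotient map $\coprod_i U_i\to\mathbb P^n_{\mathbb B}$ is $\coprod_i V(C_{(x_i)})$, which is closed, and concludes from the definition of the quotient topology. Your extra checks (overlap compatibility making $V_+(C)$ well defined, and homogeneity of $C_P^{\operatorname{proj}}$ via Lemma~\ref{lemma: saturation is cong}) are handled in the paper by the remark after Definition~\ref{definition: homegenius loci} or left implicit, so they add care rather than change the route.
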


\begin{proof}
Let
\[
q:\coprod_{i=0}^n U_i\longrightarrow\mathbb P^n
\]
be the quotient map. By Definition \ref{definition: homegenius loci}, we have
\[
q^{-1}(V_+(C))
=
\coprod_{i=0}^n V(C_{(x_i)}).
\]
Each $V(C_{(x_i)})$ is closed in $U_i$, so this disjoint union is
closed in $\coprod_iU_i$. Since $\Proj$ is equipped with the
quotient topology, $V_+(C)$ is closed in $\Proj$. Taking
$C=C_P^{\operatorname{proj}}$ proves the final assertion.
\end{proof}

The following proposition shows that the quotient of $\mathbb{B}[x_0,\dots,x_n]$ by a homogeneous congruence has a canonical grading.

\begin{proposition}\label{proposition: homogeneous coordinate semiring}
Let $C$ be a homogeneous congruence on
$S=\mathbb B[x_0,\ldots,x_n]$, and set $R:=S/C$. Then the standard
grading on $S$ descends to a grading on $R$. Moreover, for each $i$
such that no power of $\overline{x_i}$ is zero in $R$, there are
natural isomorphisms of graded semirings:
\[
R_{\overline{x_i}}
\cong
S_{x_i}/C_{x_i} \quad \text{and} \quad \left(R_{\overline{x_i}}\right)_0
\cong
\frac{(S_{x_i})_0}{C_{(x_i)}}.
\]
\end{proposition}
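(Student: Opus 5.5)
The argument has three parts: the grading on $R=S/C$, the identification $R_{\overline{x_i}}\cong S_{x_i}/C_{x_i}$, and its degree-zero restriction.

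\textbf{Grading.} Since $C$ is homogeneous, Theorem~\ref{theorem: congruence homogeneous elts} says that $(F,G)\in C$ implies $(F_d,G_d)\in C$ for all $d$. Set $R_d:=\pi_C(S_d)$, where $\pi_C:S\to R$ is the quotient map. Every element of $R$ is then a finite sum $\sum_d \pi_C(F_d)$, and $R_dR_e\subseteq R_{d+e}$. The point to check is uniqueness of the decomposition in the idempotent setting. If $\sum_d \pi_C(F_d)=\sum_d\pi_C(G_d)$, then $(\sum F_d,\sum G_d)\in C$, and Theorem~\ref{theorem: congruence homogeneous elts} gives $(F_d,G_d)\in C$ for each $d$. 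Hence the components in $R$ are well defined. (A zero component is allowed, since $R_d\cap R_e=\{0\}$ follows from the same argument.) So $R=\bigoplus_d R_d$ as graded semirings.

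\textbf{Localization.} Assume no power of $\overline{x_i}$ is zero in $R$. Then $C\cap(T\times\{0\})=\varnothing$ for $T=\{x_i^k\}$, because $(x_i^k,0)\in C$ would mean $\overline{x_i}^k=0$. Theorem~\ref{theorem: Tanaka}(3) therefore gives a natural isomorphism $S_{x_i}/C_{x_i}\cong \pi_C(T)^{-1}(S/C)=R_{\overline{x_i}}$, sending $F/x_i^k\mapsto \overline F/\overline{x_i}^k$. We then check that this map is graded. Both sides carry the grading \eqref{eq: graded}, namely $\deg(F/x_i^k)=d-k$ for $F$ homogeneous of degree $d$. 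The congruence $C_{x_i}$ is homogeneous, being generated by the localizations of homogeneous relations, so by Theorem~\ref{theorem: Tanaka}(4) the quotient $S_{x_i}/C_{x_i}$ inherits the grading just as $R$ did in the first step. The map sends homogeneous fractions to homogeneous fractions of the same degree, so it is a graded isomorphism.

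\textbf{Degree zero.} Restrict the composite $(S_{x_i})_0\hookrightarrow S_{x_i}\to S_{x_i}/C_{x_i}\cong R_{\overline{x_i}}$. Its image is exactly $(R_{\overline{x_i}})_0$: every degree-zero element of $R_{\overline{x_i}}$ is $\overline F/\overline{x_i}^k$ with $F$ homogeneous of degree $k$, using the first step to pick homogeneous representatives. Its kernel congruence is $C_{x_i}\cap\bigl((S_{x_i})_0\times(S_{x_i})_0\bigr)=C_{(x_i)}$ by Definition~\ref{definition: degree-zero}. The first isomorphism theorem for semirings, namely that a surjective homomorphism $\varphi$ induces $R_1/\varphi^{-1}(\Delta)\cong R_2$, then gives $(R_{\overline{x_i}})_0\cong (S_{x_i})_0/C_{(x_i)}$.

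\textbf{Main obstacle.} The delicate step is verifying that $C_{x_i}$ is homogeneous, so that $S_{x_i}/C_{x_i}$ is graded and the degree-zero part is computed componentwise. I would handle it with Theorem~\ref{theorem: Tanaka}(4). A relation $F/x_i^k\sim G/x_i^l$ holds if and only if $(x_i^{v+l}F,x_i^{v+k}G)\in C$ for some $v$. Applying Theorem~\ref{theorem: congruence homogeneous elts} to this pair and comparing degree-$d$ components shows that the relation holds degree by degree.
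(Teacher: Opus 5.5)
Your proposal is correct and follows essentially the same route as the paper: you descend the grading via Theorem~\ref{theorem: congruence homogeneous elts}, identify $S_{x_i}/C_{x_i}$ with $R_{\overline{x_i}}$, and restrict to degree zero, where the kernel is $C_{x_i}\cap\bigl((S_{x_i})_0\times(S_{x_i})_0\bigr)=C_{(x_i)}$. The differences are minor. You cite Theorem~\ref{theorem: Tanaka}(3) where the paper checks the congruence kernel of $S_{x_i}\to R_{\overline{x_i}}$ directly, and you use Theorem~\ref{theorem: Tanaka}(4) to verify the homogeneity of $C_{x_i}$, which the paper only asserts.
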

\begin{proof}
We first show that the standard grading on $S$ descents to $R$. In fact, let $\pi:S \to R=S/C$ be the quotient map, and $R_d:=\pi(S_d)$. We claim that $R=\bigoplus_d R_d$. In fact, since $\pi$ is surjective, every element of $R$ is a sum of elements in $R_d$.  Suppose that
\begin{equation}
\sum_d\overline{F_d} = \sum_d\overline{G_d}.    
\end{equation}
Then, we have
\begin{equation}
\sum_dF_d\sim_C\sum_dG_d.    
\end{equation}
Since $C$ is homogeneous, by Theorem \ref{theorem: congruence homogeneous elts}, for any $d$, we have $F_d\sim_CG_d$, thereby showing that $\overline{F_d}=\overline{G_d}$. This proves our claim. Since multiplication satisfies
$R_dR_e\subseteq R_{d+e}$, this defines a grading on $R$.

Now consider the natural homomorphism
\[
\pi_i:S_{x_i}\longrightarrow R_{\overline{x_i}},
\qquad
\frac{F}{x_i^m}\longmapsto
\frac{\overline F}{\overline{x_i}^{\,m}}.
\]
Clearly $\pi_i$ is surjective. We claim that the congruence kernel of $\pi_i$ is exactly the localized
congruence $C_{x_i}$. Indeed, two fractions have the same image precisely when, after multiplying by a sufficiently large power of $x_i$, their cross-products are equivalent modulo $C$, which is the definition of equivalence modulo $C_{x_i}$. It therefore induces an isomorphism
\begin{equation}
 S_{x_i}/C_{x_i} \cong R_{\overline{x_i}}.   
\end{equation}

The localized congruence $C_{x_i}$ is homogeneous for the induced
$\mathbb Z$-grading on $S_{x_i}$, so the quotient
$S_{x_i}/C_{x_i}$ inherits a grading whose degree-zero component is
the image of $(S_{x_i})_0$. Consider the natural homomorphism\footnote{Note that $[\alpha]_{C_{(x_i)}}$ is the equivalence class of $\alpha \in (S_{x_i})_0$ in the quotient $\frac{(S_{x_i})_0}{C_{(x_i)}}$ and $[\alpha]_{C_{x_i}}$ is similarly read in $\left(S_{x_i}/C_{x_i}\right)_0$.}
\[
\eta:
\frac{(S_{x_i})_0}{C_{(x_i)}}
\longrightarrow
\left(S_{x_i}/C_{x_i}\right)_0,
\qquad
[\alpha]_{C_{(x_i)}}\longmapsto[\alpha]_{C_{x_i}}.
\]
This map is well defined because
$C_{(x_i)} = C_{x_i}\cap
\bigl((S_{x_i})_0\times(S_{x_i})_0\bigr).$
It is surjective because, by definition of the quotient grading,
every element of $\left(S_{x_i}/C_{x_i}\right)_0$ is represented by
an element of $(S_{x_i})_0$. It is injective because, for
$\alpha,\beta\in(S_{x_i})_0$, equality
$[\alpha]_{C_{x_i}}=[\beta]_{C_{x_i}}$
is equivalent to $(\alpha,\beta)\in C_{x_i}$, which, since both
elements have degree zero, is equivalent to
$(\alpha,\beta)\in C_{(x_i)}$. Hence $\eta$ is an isomorphism.

Combining this with the preceding isomorphism gives
\[
\left(R_{\overline{x_i}}\right)_0
\cong \left(S_{x_i}/C_{x_i}\right)_0 \cong
\frac{(S_{x_i})_0}{C_{(x_i)}}.
\]
\end{proof}

\begin{rmk}
We remark that from Theorem \ref{theorem: Tanaka} (1), for a homogeneous congruence $C$ on $S=\mathbb B[x_0,\ldots,x_n]$, we have
\[
V_+(C)\cap U_i\cong\operatorname{SpecCong}\!\left(\frac{(S_{x_i})_0}{C_{(x_i)}}\right),
\]
where $U_i$ is the standard chart as in Definition \ref{definition: U_i}.
\end{rmk}

Here are several direct consequences. Let $P$ be an ideal of $S=\mathbb{B}[x_0,\dots,x_n]$. Define the following:
\begin{equation}\label{eq: chartwise coordiate}
R_P^{\operatorname{proj}} = S/C_P^{\operatorname{proj}}, \quad R_{P,i} = \frac{(S_{x_i})_0}
{(C_P^{\operatorname{proj}})_{(x_i)}} \quad \text {for each $i$}.
\end{equation}
$R_P^{\operatorname{proj}}$ is a graded semiring by Proposition \ref{proposition: homogeneous coordinate semiring}, which we call the \emph{homogeneous coordinate semiring} associated to the projective bend
locus $\text{Proj}(P)$. Again, Proposition \ref{proposition: homogeneous coordinate semiring} shows that 
\begin{equation}
\operatorname{Proj}(P)\cap U_i
\cong
\operatorname{SpecCong}(R_{P,i}).
\end{equation}
Moreover, if no power of $\overline{x_i}$ is zero in
$R_P^{\operatorname{proj}}$, then
\begin{equation}
R_{P,i}
\cong
\left(
(R_P^{\operatorname{proj}})_{\overline{x_i}}
\right)_0.
\end{equation}

Finally, by Lemma~\ref{lemma: main lemma}, on the affine chart $U_0$ we have
$\theta\bigl((C_P^{\operatorname{proj}})_{(x_0)}\bigr) = \operatorname{Bend}(P).$
Since $\theta:(S_{x_0})_0\to\mathbb B[x_1,\ldots,x_n]$ is an
isomorphism, this induces an isomorphism
\begin{equation}
R_{P,0}
=
\frac{(S_{x_0})_0}
{(C_P^{\operatorname{proj}})_{(x_0)}}
\cong
\frac{\mathbb B[x_1,\ldots,x_n]}
{\operatorname{Bend}(P)}.
\end{equation}

\begin{rmk}
The chartwise quotient semirings $R_{P,i}$ in \eqref{eq: chartwise coordiate} retain the full defining relations, whereas the underlying closed point set remembers only the prime congruences containing those relations. We retain this algebraic data (coordinate semirings) without asserting here that the prime-congruence space carries a structure sheaf.
\end{rmk}

\subsection{Comparison with other works.}

In this subsection, we discuss some relations between our approach, that of Jo\'o and Mincheva \cite{joo2026varieties,joo2018prime}, and that of Lescot \cite{les3}.

The homogenization $C\mapsto C^h$ used in this paper is the standard homogenization of congruences appearing in the theory of tropical schemes by Maclagan and Rinc\'on in \cite{maclagan2016tropical}. Our construction of $\Proj$ and $\text{Proj}(P)$ differ from the usual tropical scheme construction as we consider prime congruences on the affine chart semirings and glued chartwise.

The affine relationship between ideals and congruences studied by Jo\'o and Mincheva \cite[\S 3.2]{joo2026varieties} is closely related to the first step of our construction. For a congruence $C$ on $A=\mathbb{B}[x_1,\dots,x_n]$, set
\begin{equation}
I_C
:=
\left\{
f \mid 
(f,f_{\widehat u})\in C
\text{ for every }u\in\operatorname{Supp}(f)
\right\},
\end{equation}
and for an ideal $I$ of $A$ define its bend closure by
$I^{\operatorname{cl}}:=I_{\operatorname{Bend}(I)}.$
Then, one has 
\begin{equation}
I\subseteq I^{\operatorname{cl}} \quad \text{and} \quad 
\operatorname{Bend}(I^{\operatorname{cl}})=\operatorname{Bend}(I).
\end{equation}
Consequently, one has
\begin{equation}
C_I^{\operatorname{proj}}=C_{I^{\operatorname{cl}}}^{\operatorname{proj}}
\quad \text{and}\quad
\operatorname{Proj}(I)
=
\operatorname{Proj}(I^{\operatorname{cl}}).
\end{equation}
Thus our projective closure construction depends only on the affine bend congruence. In the tropical setting with $\mathbb{B}[x_1^\pm,\dots,x_n^\pm]$, Jo\'o and Mincheva
\cite[Theorem~3.11(iii)]{joo2026varieties} showed that the correspondences
\begin{equation}
I\mapsto\operatorname{Bend}(I)\quad  \text{and} \quad C\mapsto I_C   
\end{equation}
 restrict to a
bijection between nonzero closed prime ideals\footnote{An ideal $I$ is closed if $I=I_{\text{Bend}(I)}$.} and non-minimal prime
congruences. This result is affine and does not imply that
$\operatorname{Bend}(I)$ is a prime congruence for every prime ideal
$I$ in a Boolean polynomial semiring, nor that
\[
C_I^{\operatorname{proj}}
=
\left(\operatorname{Bend}(I)^h:x_0^\infty\right)
\]
is a prime congruence on
$S=\mathbb B[x_0,\ldots,x_n]$.

Jo\'o and Mincheva also showed why one should retain the full prime-congruence spectrum rather than only ordinary tropical-valued points (geometric points): a prime congruence may contain ordering data that are invisible at the level of geometric points. Accordingly, the local pieces of our projective closure are the full spectra
$V(C_{(x_i)})\subseteq\operatorname{SpecCong}((S_{x_i})_0),$
not merely their sets of $\mathbb B$- or $\mathbb T$-valued points.

Lescot  \cite[\S 2]{les3} studied a different notion of prime congruence, namely a proper congruence $Q$ satisfying
\[
ab\sim_Q0\Longrightarrow a\sim_Q0\ \text{or}\ b\sim_Q0.
\]
The twisted product primality implies Lescot's condition because $(a,0)(b,0)=(ab,0)$, but the converse need not hold. Moreover, Lescot's topology is determined by zero classes, while the topology used here records arbitrary relations contained in a prime congruence. His saturated spectrum is therefore related to, but different from, the prime-congruence spectrum used in our construction.

Finally, the classifications of prime congruences of Boolean Laurent
polynomial and polynomial semirings by admissible matrices in
\cite[Theorems~4.6 and~4.9]{joo2018prime} may be applied separately
on each affine chart and its standard overlaps. Such matrices describe the prime congruences containing $C_{(x_i)}$, while their compatibility on overlaps is obtained by the usual change of affine coordinates $x_k/x_i\mapsto(x_k/x_j)/(x_i/x_j)$. The resulting descriptions remain local; the projective locus is obtained by gluing the compatible affine pieces.

\begin{rmk}
By Proposition~\ref{prop:Tanaka-irreducibility}, the use of a prime
ideal $P$ as affine input does not by itself imply that
$V(\operatorname{Bend}(P))$ is irreducible. Assuming this locus is
nonempty, it is irreducible precisely when
\[
\sqrt[\operatorname{pr}]{\operatorname{Bend}(P)}
\]
is a prime congruence. We return to the corresponding projective
statement after proving the projective-closure theorem (Theorem \ref{thm:projective-congruence-closure} and Corollary \ref{cor:projective-irreducibility}). 
\end{rmk}

\subsection{Example: Projective cuspidal cubic}\label{example: plane curve}  In this subsection, we compute one example explicitly. We start with the following lemma. 

\begin{lem}
Let $C$ be a proper congruence on
$S=\mathbb B[x_0,\ldots,x_n]$, and assume that no power of $x_i$ is
$C$-equivalent to zero. If $\overline{x_i}$ denotes the image of
$x_i$ in $S/C$, then
$(C:x_i^\infty)=C$
if and only if the natural localization map
$S/C\longrightarrow (S/C)_{\overline{x_i}}$
is injective.    
\end{lem}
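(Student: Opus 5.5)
The plan is to rewrite both conditions as the same statement about elements of $R:=S/C$: whether $\overline{x_i}$ can cancel.

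\textbf{Step 1: the kernel of the localization map.} Let $\lambda:R\to R_{\overline{x_i}}$, $\overline F\mapsto \overline F/1$. The hypothesis that no power of $x_i$ is $C$-equivalent to zero means $0\notin\{\overline{x_i}^{\,k}\}$, so this localization is the one described at the beginning of Subsection~\ref{subsection: localization}. By the definition of equality of fractions, $\overline F/1=\overline G/1$ holds if and only if $\overline{x_i}^{\,N}\overline F=\overline{x_i}^{\,N}\overline G$ for some $N\ge 0$. Since $\overline{x_i}^{\,N}\overline F=\overline{x_i^NF}$, this is equivalent to
\[
(x_i^NF,\,x_i^NG)\in C \quad \text{for some } N\geq 0,
\]
that is, to $(F,G)\in (C:x_i^\infty)$. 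So the congruence kernel of the composite $S\to R\to R_{\overline{x_i}}$ is exactly $(C:x_i^\infty)$.

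\textbf{Step 2: comparing with $C$.} The inclusion $C\subseteq (C:x_i^\infty)$ always holds: take $N=0$. Injectivity of $\lambda$ means that $\lambda(\overline F)=\lambda(\overline G)$ forces $\overline F=\overline G$. By Step~1, this says every $(F,G)\in (C:x_i^\infty)$ already lies in $C$, i.e.\ $(C:x_i^\infty)\subseteq C$.

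\textbf{Step 3: the two directions.}
\begin{enumerate}
\item If $\lambda$ is injective, Step~2 gives $(C:x_i^\infty)\subseteq C$, hence equality.
\item Conversely, if $(C:x_i^\infty)=C$, then $\lambda(\overline F)=\lambda(\overline G)$ gives $(F,G)\in (C:x_i^\infty)=C$, so $\overline F=\overline G$ and $\lambda$ is injective.
\end{enumerate}

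There is no real obstacle. The only point needing care is Step~1. One must use the semiring definition of equality in $T^{-1}R$, namely $adt=bct$ for some $t\in T$, rather than a difference being killed. One must also use the hypothesis to ensure $R_{\overline{x_i}}$ is a genuine localization with $0\notin T$, since otherwise the localization degenerates. Homogeneity of $C$ is not needed for this argument.
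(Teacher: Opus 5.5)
Your proposal is correct and follows essentially the same route as the paper. You identify the congruence kernel of $S\to S/C\to (S/C)_{\overline{x_i}}$ with $(C:x_i^\infty)$ via the semiring criterion for equality of fractions, and then injectivity is exactly the statement that this kernel collapses to $C$. Your extra remarks (that $C\subseteq (C:x_i^\infty)$ by taking $N=0$, that the hypothesis on powers of $x_i$ ensures $0\notin T$, and that homogeneity is not used) spell out what the paper's short proof leaves implicit.
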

\begin{proof}
For $F,G\in S$, the images of $\overline F$ and $\overline G$
in $(S/C)_{\overline{x_i}}$ are equal if and only if there exists
$N\geq0$ such that
$\overline{x_i}^{\,N}\overline F = \overline{x_i}^{\,N}\overline G$
in $S/C$, or equivalently,
$x_i^NF\sim_Cx_i^NG.$
Thus the kernel congruence of the localization map is induced by
$(C:x_i^\infty)$, and the map is injective precisely when
$(C:x_i^\infty)=C$.   
\end{proof}

Consequently, whenever $C_P^{\operatorname{proj}}$ is proper, the
natural map
\[
S/C_P^{\operatorname{proj}}
\longrightarrow
\left(S/C_P^{\operatorname{proj}}\right)_{\overline{x_0}}
\]
is injective. Equivalently, if two elements $F,G\in S$ become equal
modulo $C_P^{\operatorname{proj}}$ after localizing at $x_0$, then
$F\sim_{C_P^{\operatorname{proj}}}G$ already in $S$. In particular,
restriction to the distinguished chart $U_0$ introduces no additional
homogeneous relations as it is shown in the example below. \\

Let $A=\mathbb B[u,v]$, let $S=\mathbb B[x_0,x_1,x_2]$, and identify the standard affine chart $U_0$ with $\operatorname{SpecCong}(\mathbb B[u,v])$ by $u=x_1/x_0$ and $v=x_2/x_0$. Consider the ideal
\[
P=\langle u^3+v^2\rangle\subseteq\mathbb B[u,v].
\]

We first claim that $\Bend(P)=\angles{u^3 \sim v^2}$. In fact, since $u^3+v^2\in P$, its bend relations give
\[
u^3+v^2\sim u^3,\quad u^3+v^2\sim v^2,
\]
and hence $u^3\sim v^2$. Thus $\langle u^3\sim v^2\rangle\subseteq\operatorname{Bend}(P)$.

Conversely, let $C:=\langle u^3\sim v^2\rangle$ and let $f\in P$. Since $P$ is principal, we may write $f=h(u,v)(u^3+v^2)$ for some $h\in\mathbb B[u,v]$. Let $q$ be a monomial occurring in $f$. Choose a monomial $m$ occurring in $h$ such that either $q=mu^3$ or $q=mv^2$, and let $q'$ denote the corresponding paired monomial $mv^2$ or $mu^3$, respectively. Then, we have
\[
(q,q') \in C.
\]
Moreover, $q'\neq q$ and $q'$ also occurs in $f$, so $q'$ remains in $f_{\widehat q}$. Since addition is idempotent, we have
\[
f=f_{\widehat q}+q\sim f_{\widehat q}+q'=f_{\widehat q}.
\]
Thus every bend relation of every element of $P$ lies in $C$, and therefore $\operatorname{Bend}(P)=\langle u^3\sim v^2\rangle.$

The affine coordinate semiring can be described explicitly. Consider the homomorphism 
\begin{equation}
  \varphi:\mathbb B[u,v]\longrightarrow\mathbb B[t],\quad \varphi(u)=t^2,\ \varphi(v)=t^3.  
\end{equation}
The relation $u^3\sim v^2$ lies in $\ker(\varphi)$.\footnote{By $\ker(\varphi)$, we mean the congruence kernel, i.e., equalizer.} Conversely, define
$\lambda:\mathbb N^2\longrightarrow\mathbb N, \ \lambda(a,b)=2a+3b.$
Two monomials $u^av^b$ and $u^{a'}v^{b'}$ have the same image under
$\varphi$ precisely when
\[
\lambda(a,b)=2a+3b=2a'+3b'=\lambda(a',b'),
\]
or equivalently,
$(a-a',b-b')=k(3,-2)$
for some $k\in\mathbb Z$. Since both exponent vectors have
nonnegative coordinates, one monomial can therefore be obtained from
the other by repeated applications of the relation
$u^3\sim v^2$. Hence any two monomials with the same image under
$\varphi$ are equivalent modulo
$C=\langle u^3\sim v^2\rangle$.

We now verify the corresponding statement for arbitrary polynomials.
Suppose that $\varphi(f)=\varphi(g)$, and let
$E:=\operatorname{Supp}(\varphi(f))
=\operatorname{Supp}(\varphi(g)).$
For each $\ell\in E$, choose one monomial $m_\ell$ satisfying
$\varphi(m_\ell)=t^\ell$. Every monomial occurring in either $f$ or
$g$ is equivalent modulo $C$ to the corresponding $m_\ell$.
Consequently, using compatibility with addition and the idempotence of
addition, we obtain
\[
f\sim_C\sum_{\ell\in E}m_\ell
\sim_C g.
\]
Thus $\ker(\varphi)\subseteq C$. The reverse inclusion follows from
$\varphi(u^3)=\varphi(v^2)$, and therefore
$\ker(\varphi)=\langle u^3\sim v^2\rangle
=\operatorname{Bend}(P).$
Hence
\[
\mathbb B[u,v]/\operatorname{Bend}(P)
\cong
\mathbb B[t^2,t^3]\subseteq\mathbb B[t].
\]

We now projectivize. With, $u=x_1/x_0$ and $v=x_2/x_0$, the relation $u^3\sim v^2$ homogenizes to
\begin{equation}
x_1^3\sim x_0x_2^2.
\end{equation}
Now, consider the following congruence on $\mathbb{B}[x_0,x_1,x_2]$:
\begin{equation}
H:=\langle x_1^3\sim x_0x_2^2\rangle.
\end{equation}
To describe the congruence $H$, consider the homomorphism 
\begin{equation}
 \psi:S=\mathbb{B}[x_0,x_1,x_2]\longrightarrow\mathbb B[s,t],\quad \psi(x_0)=s^3,\ \psi(x_1)=st^2,\ \psi(x_2)=t^3.   
\end{equation}

We claim that $H=\ker(\psi)$. Clearly,
$x_1^3\sim x_0x_2^2$ lies in $\ker(\psi)$, so
$H\subseteq\ker(\psi).$
Define the monoid homomorphism
\[
\mu:\mathbb N^3\longrightarrow\mathbb N^2,
\qquad
\mu(a,b,c)=(3a+b,2b+3c).
\]
Then
$\psi(x_0^ax_1^bx_2^c) = s^{3a+b}t^{2b+3c},$
so two monomials $x_0^ax_1^bx_2^c$ and
$x_0^{a'}x_1^{b'}x_2^{c'}$ have the same image under $\psi$ if and
only if
$\mu(a,b,c)=\mu(a',b',c').$
The induced group homomorphism
\[
\mu^{\operatorname{gp}}:\mathbb Z^3\longrightarrow\mathbb Z^2
\]
has kernel generated by $(1,-3,2)$. Hence the two monomials have the
same image precisely when
$(a-a',b-b',c-c')=k(1,-3,2)$
for some $k\in\mathbb Z$. This is exactly the exponent relation
corresponding to
$x_1^3\sim x_0x_2^2.$
Therefore any two monomials with the same image under $\psi$ are
equivalent modulo $H$.

Now suppose that $\psi(F)=\psi(G)$, and let
\[
E:=\operatorname{Supp}(\psi(F))
=\operatorname{Supp}(\psi(G)) \subseteq \mathbb{N}^2.
\]
For each $(p,q)\in E$, choose one monomial $M_{p,q}$ satisfying
$\psi(M_{p,q})=s^pt^q.$
Every monomial occurring in either $F$ or $G$ is equivalent modulo
$H$ to the corresponding $M_{p,q}$. Hence
\[
F\sim_H\sum_{(p,q)\in E}M_{p,q}
\sim_H G.
\]
It follows that $\ker(\psi)\subseteq H$, and therefore
$H=\ker(\psi).$

Since $H=\ker(\psi)$, multiplication by $x_0$ is cancellative modulo
$H$. Indeed, if
\[
(x_0^NF,x_0^NG)\in H,
\text{ then }
s^{3N}\psi(F)=s^{3N}\psi(G).
\]
Multiplication by the monomial $s^{3N}$ is injective on
$\mathbb B[s,t]$, since it translates monomial supports. Therefore
$\psi(F)=\psi(G)$, and hence $(F,G)\in H$. Thus
$(H:x_0^\infty)=H.$

It remains to verify that homogenizing the entire congruence
$\operatorname{Bend}(P)$ gives $H$, rather than merely observing that
the displayed generator homogenizes to the generator of $H$. Since
$u^3\sim v^2$ lies in $\operatorname{Bend}(P)$, its homogenization
$x_1^3\sim x_0x_2^2$ lies in $\operatorname{Bend}(P)^h$, and hence
$H\subseteq\operatorname{Bend}(P)^h.$

Conversely, let $F\sim G$ be any relation in
$\operatorname{Bend}(P)=\ker(\varphi)$, and let
$D=\max\{\deg(F),\deg(G)\}.$
For a monomial $u^av^b$, its degree-$D$ homogenization is
$x_0^{D-a-b}x_1^ax_2^b$, whose image under $\psi$ is
\[
s^{3(D-a-b)}(st^2)^a(t^3)^b
=
s^{3D-(2a+3b)}t^{2a+3b}.
\]
Thus the image under $\psi$ of a homogenized monomial depends only on
$2a+3b$. Since $\varphi(F)=\varphi(G)$, the two polynomials have the
same set of values of $2a+3b$ in their supports. It follows that the
two sides of $(F\sim G)^h$ have the same image under $\psi$, and hence
\[
(F\sim G)^h\in\ker(\psi)=H.
\]
Therefore
$\operatorname{Bend}(P)^h\subseteq H.$
and we conclude that
$\operatorname{Bend}(P)^h=H.$

Since $H$ is already $x_0$-saturated,
\[
C_P^{\operatorname{proj}}
=
(\operatorname{Bend}(P)^h:x_0^\infty)
=
H
=
\langle x_1^3\sim x_0x_2^2\rangle.
\]
We compute the resulting projective locus chartwise. On the chart 
\[
U_0\cong\operatorname{SpecCong}(\mathbb B[u,v]),\ u=x_1/x_0,\ v=x_2/x_0,
\]
the homogeneous relation becomes $u^3\sim v^2.$ Hence, we have
\[
\operatorname{Proj}(P)\cap U_0 =V(\langle u^3\sim v^2\rangle),
\]
whose coordinate semiring is
\[
\mathbb B[u,v]/\langle u^3\sim v^2\rangle\cong\mathbb B[t^2,t^3].
\]
On the chart 
\[
U_1\cong\operatorname{SpecCong}(\mathbb B[a,b]),\ a=x_0/x_1,\ b=x_2/x_1,
\]
dividing $x_1^3\sim x_0x_2^2$ by $x_1^3$ gives $1\sim ab^2.$ Thus, one has
\[
\operatorname{Proj}(P)\cap U_1 =V(\langle1\sim ab^2\rangle).
\]
In the quotient, $b$ is invertible with inverse $ab$, and $a\sim b^{-2}$, so the coordinate semiring is naturally identified with the Laurent Boolean semiring $\mathbb B[b^{\pm1}]$.
On the chart 
\[
U_2\cong\operatorname{SpecCong}(\mathbb B[c,d]),\ c=x_0/x_2,\ d=x_1/x_2,
\]
the relation becomes $d^3\sim c.$ Therefore, we have
\[
\operatorname{Proj}(P)\cap U_2 =V(\langle d^3\sim c\rangle),
\]
and the coordinate semiring on this chart is $\mathbb B[c,d]/\langle d^3\sim c\rangle\cong\mathbb B[d].$

These chartwise descriptions agree on the standard overlaps. For example, on $U_0\cap U_1$ one has $a=u^{-1}$ and $b=v/u$, so by $u^3\sim v^2$
\[
ab^2=u^{-1}(v/u)^2=v^2u^{-3}\sim 1.
\]
Similarly, on $U_0\cap U_2$ one has $c=v^{-1}$ and $d=u/v$, and hence
\[
d^3=u^3v^{-3}\sim v^2v^{-3}=v^{-1}=c.
\]
Thus the affine congruence loci glue to the projective closed locus 
\[
\operatorname{Proj}(P) =V_+(\langle x_1^3\sim x_0x_2^2\rangle)\subseteq\Proj.
\]
This is a projective plane curve in our setting: its defining homogeneous relation is cubic, and its distinguished affine chart is the monomial curve $u^3\sim v^2$ with coordinate semiring $\mathbb B[t^2,t^3]$. This is the analogue of how over a field the ideal $(u^3-v^2)\subseteq k[u,v]$ defines the curve $u^3=v^2$, and its projective closure is cut out by
$x_1^3-x_0x_2^2=0$
in $\mathbb P^2_k$.\\

By a $\mathbb B$-valued point of an affine chart with coordinate semiring $R$ we mean a prime congruence $Q$ such that $R/Q\cong\mathbb B$ as $\mathbb B$-algebras. Equivalently, $Q$ is the congruence kernel of a $\mathbb B$-algebra homomorphism
$R\longrightarrow\mathbb B.$ In particular, this agrees with the notion of $K$-rational points of an algebraic variety over a field $k$ with an extension $K$ of $k$. 

We emphasize that the full prime-congruence locus need not coincide
with its subset of $\mathbb B$-valued points as in the classical case for fields. For the projective locus considered, these chartwise evaluation
points correspond to the nonzero tuples
\[
[a_0:a_1:a_2]\in\mathbb B^3\setminus\{(0,0,0)\}
\]
satisfying the defining homogeneous relation
$a_1^3=a_0a_2^2.$
Since $\mathbb B^\times=\{1\}$, there is no nontrivial rescaling of
such tuples. 

Indeed, we claim that the projective curve
\[
X:=V_+\bigl(\langle x_1^3\sim x_0x_2^2\rangle\bigr)
\subseteq \Proj
\]
has five prime-congruence points, although only three of them are
$\mathbb B$-valued. To see this, we continue using the same notation as above.

On the chart $U_0$, with $u=x_1/x_0$ and $v=x_2/x_0$, the coordinate
semiring is
\begin{equation}\label{eq: coordinate semiring0}
\mathbb B[u,v]/\langle u^3\sim v^2\rangle
\cong
\mathbb B[t^2,t^3].
\end{equation}
Consider the following (additive) submonoid of $\mathbb{N}$
\begin{equation}\label{eq: M}
M:=\langle2,3\rangle=\{0,2,3,\ldots\},
\end{equation}
equipped with the trivial order. Then the semiring in
\eqref{eq: coordinate semiring0} is the monoid semiring $\mathbb B[M]$.

Recall from \cite[Definition~3.6]{Tanaka26} that a \emph{face} of an ordered monoid $M$ is a submonoid $F\subseteq M$ such that
\[
a+b\in F\Longrightarrow a,b\in F,
\]
and such that $a\in F$ and $a\leq b$ imply $b\in F$. Since $M$ is
equipped here with the trivial order, the second condition is
automatic.

Because $M\subseteq\mathbb N$, we write its elements multiplicatively
as monomials in one formal variable:
\[
\mathbb B[M]
=
\left\{
\sum_{m\in E}t^m:
E\subseteq M\text{ finite}
\right\},
\qquad
t^mt^{m'}=t^{m+m'}.
\]
Thus $t^m$ is the single-variable monomial indexed by $m\in M$.

For a prime congruence $Q$ on $\mathbb B[M]$, define 
\[
F_Q:=\{m\in M:t^m\not\sim_Q0\}.
\]
Tanaka shows that $F_Q$ is a face of $M$. Moreover, for each face
$F$, the stratum consisting of prime congruences with 
$F = F_Q$ is naturally identified with
\[
\operatorname{SpecCong}\bigl(\mathbb B[F^{\operatorname{gp}}]\bigr),
\]
where $F^{\operatorname{gp}}$ is the group completion of $F$; see
\cite[Lemma~4.14 and equation~(23)]{Tanaka26}.

In our case of \eqref{eq: M}, the only faces of $M$ are $\{0\}$ and $M$. In fact, suppose that a face
$F$ contains a positive element. Expressing that element as a sum of
$2$'s and $3$'s shows that $F$ contains either $2$ or $3$. If
$2\in F$, then $6=2+2+2\in F$, and the equality $6=3+3$ implies
$3\in F$. Similarly, $3\in F$ implies $2\in F$. Hence $F=M$.

The face $\{0\}$ contributes one point, since
$\mathbb B[\{0\}^{\operatorname{gp}}]=\mathbb B$ has only its diagonal
prime congruence. On the chart $U_0$, this point is the prime
congruence induced by evaluation at $t=0$:
\[
Q_0^{(0)}
:=
\ker\!\left(
\mathbb B[t^2,t^3]\longrightarrow\mathbb B,
\quad
t^2,t^3\longmapsto0
\right).
\]
The superscript records that this point arises on $U_0$. For the face
$M$, one has
\[
M^{\operatorname{gp}}=\mathbb Z,
\qquad
\mathbb B[M^{\operatorname{gp}}]=\mathbb B[t^{\pm1}].
\]

Now, for $\mathbb B[t^{\pm1}]$, the one-variable case of \cite[Theorem~4.6]{joo2018prime} gives exactly three prime congruences on this Laurent semiring. Explicitly, because the image of $t$ is
invertible in a totally ordered quotient, exactly one of the following
holds:
\[
t\sim1,\qquad t>1,\qquad t<1.
\]
These give, respectively,
\[
Q_1
:=
\ker\!\left(
\mathbb B[t^{\pm1}]\longrightarrow\mathbb B,
\quad
t\longmapsto1
\right),
\]
which identifies all nonzero elements, the congruence
\[
f\sim_{Q_{\max}}g
\Longleftrightarrow
\max\operatorname{Supp}(f)=\max\operatorname{Supp}(g),
\]
and the congruence
\[
f\sim_{Q_{\min}}g
\Longleftrightarrow
\min\operatorname{Supp}(f)=\min\operatorname{Supp}(g).
\]
Consequently,
\[
\left|
\operatorname{SpecCong}\bigl(\mathbb B[t^2,t^3]\bigr)
\right|
=
1+3=4.
\]

On the chart $U_2$, set $c=x_0/x_2$ and $d=x_1/x_2$. The defining
relation becomes $d^3\sim c$, so the coordinate semiring is
\[
\mathbb B[c,d]/\langle d^3\sim c\rangle
\cong
\mathbb B[d].
\]
Applying the same mobile-face argument to
$\mathbb B[d]=\mathbb B[\mathbb N]$ shows that this chart also has four
prime-congruence points. Its boundary point is
\[
Q_0^{(2)}
:=
\ker\!\left(
\mathbb B[d]\longrightarrow\mathbb B,
\quad
d\longmapsto0
\right),
\]
while its other three points are identified on the Laurent overlap
with $Q_1,Q_{\max},Q_{\min}$.

The overlap $U_0\cap U_2$ is obtained by inverting $v=t^3$, and
\[
\mathbb B[t^2,t^3][(t^3)^{-1}]
=
\mathbb B[t^{\pm1}].
\]
It therefore contains exactly the three Laurent prime congruences
$Q_1,Q_{\max},Q_{\min}$. On the chart $U_1$, with
$a=x_0/x_1$ and $b=x_2/x_1$, the relation is
$1\sim ab^2,$
and its coordinate semiring is $\mathbb B[b^{\pm1}]$. Under the change
of coordinates
\[
b=\frac{x_2/x_0}{x_1/x_0}=\frac vu=t,
\]
this chart is identified with the same Laurent overlap. Thus $U_1$
contributes no additional points. Moreover, in the $U_1$-coordinate semiring the relation
$1\sim ab^2$ implies that both $a$ and $b$ are invertible. Hence
every point of $X\cap U_1$ lies in both $U_0$ and $U_2$. Since the standard charts cover $X$
and $X\cap U_1\subseteq X\cap U_0\cap U_2$, we obtain
\[
|X|=4+4-3=5.
\]

It remains to identify which of these five points are
$\mathbb B$-valued. A nonzero projective tuple
$[a_0:a_1:a_2]\in\mathbb B^3\setminus\{(0,0,0)\}$
belongs to $X$ precisely when
$a_1^3=a_0a_2^2.$
Since $a^m=a$ for every $a\in\mathbb B$ and every $m\geq1$, this is
equivalent to
$a_1=a_0a_2.$
There are therefore exactly three $\mathbb B$-valued points:
\[
[1:0:0],\qquad [0:0:1],\qquad [1:1:1].
\]

The point $[1:0:0]$ lies in $U_0$ and has affine coordinates
$u=\frac{x_1}{x_0}=0, \ v=\frac{x_2}{x_0}=0.$
It therefore corresponds to the boundary evaluation congruence
$Q_0^{(0)}$.
Similarly, $[0:0:1]$ lies in $U_2$ and has affine coordinates
$c=\frac{x_0}{x_2}=0, \ d=\frac{x_1}{x_2}=0.$
It corresponds to the boundary evaluation congruence $Q_0^{(2)}$.

On the chart $U_1$, a $\mathbb B$-valued point is determined by
$a,b\in\mathbb B$ satisfying
$1=ab^2.$
This forces $a=b=1$, giving the point $[1:1:1]$. Since both
coordinates are nonzero, this point lies in the Laurent overlap. Under the coordinate $t=b$, it corresponds to $Q_1$, induced by evaluation at $t=1$. Thus $U_1$ contains a $\mathbb B$-valued point but contributes no additional projective point.

Hence the three $\mathbb B$-valued points are
\[
Q_0^{(0)},\qquad Q_0^{(2)},\qquad Q_1,
\]
corresponding respectively to
$[1:0:0],\ [0:0:1],\ [1:1:1].$
The remaining two points, $Q_{\max}$ and $Q_{\min}$, are not
$\mathbb B$-valued, since their quotients retain the nontrivial
ordering information $t>1$ and $t<1$, respectively, and hence are not
isomorphic to $\mathbb B$. Thus the five points of $X$ are
\[
Q_0^{(0)},\qquad
Q_0^{(2)},\qquad
Q_1,\qquad
Q_{\max},\qquad
Q_{\min}.
\]
The prime-congruence locus therefore contains ordering data that are
invisible in the set of ordinary $\mathbb B$-valued solutions. The
coordinate semiring is likewise algebraic coordinate data rather than
the set of points itself, and may be infinite even when the
prime-congruence locus is finite.

\begin{rmk}
The preceding chartwise construction is motivated by the local description of $\operatorname{Proj}$ in classical algebraic geometry. The chartwise approach seems to be essential for prime congruences because the naive analogue of the global set-theoretic definition of $\operatorname{Proj}$ is degenerate. Let $S=\mathbb B[x_0,\ldots,x_n]$ with the standard grading, and suppose that $Q$ is a proper homogeneous prime congruence on $S$. By \cite[Proposition~2.14]{Tanaka26}, the quotient $S/Q$ is totally
ordered and cancellative. Hence for every positive-degree homogeneous polynomial $F$ one has either $1+F\sim_Q1$ or $1+F\sim_QF$. Homogeneity forces $F\sim_Q0$ in the first case and $1\sim_Q0$ in the second. The second alternative is impossible because $Q$ is proper, so every positive-degree homogeneous polynomial is equivalent to zero. Hence the irrelevant ideal $S_+$ is contained in the zero class of every proper homogeneous prime congruence. Therefore the naive global spectrum consisting of homogeneous prime congruences whose zero classes do not contain $S_+$ is empty, which necessitates the chartwise construction used above.
\end{rmk}

\begin{rmk}
Tanaka's results, collected in Theorem~\ref{theorem: Tanaka}, provide the affine topological framework underlying
the chartwise construction above. By
\cite[Proposition~2.8]{Tanaka26}, quotienting by a congruence
identifies the prime-congruence spectrum of the quotient with the
corresponding closed subset. Moreover,
\cite[Propositions~2.12 and~2.13]{Tanaka26} show that localization
identifies the spectrum of the localized semiring with an open subset
of the original spectrum and is compatible with quotienting. These
results justify gluing the affine prime-congruence spectra.

Tanaka constructs tropical toric spaces by gluing relative
prime-congruence spectra of affine toric $\mathbb T$-algebras
associated to the cones of a fan; see \cite[\S6.2]{Tanaka26}. In
particular, applying his construction to the standard fan of
projective space gives the corresponding prime-congruence toric
analogue of tropical projective space. The space $\Proj$ is the
Boolean analogue of this chartwise construction. As observed in the
preliminaries, every proper prime congruence on a $\mathbb B$-algebra
lies over $\mathbb B$, so no restriction of the full spectrum is
needed in the Boolean case.

We emphasize that Tanaka does not consider the projective-closure operation which we construct in this paper:
he does not define the homogenization and saturation
\[
C\longmapsto (C^h:x_0^\infty)
\]
or identify the resulting locus with the topological closure of an
affine congruence locus. His quotient, localization, and gluing
results provide the affine topological framework for the
construction, while the projective-closure procedure is developed in
this section.
\end{rmk}

\section{Topological closure}

In this section, we prove that the projective closure of an ideal $P \subseteq \mathbb{B}[x_1,\dots,x_n]$ is the topological closure of $V(\Bend(P))$ in $\Proj$. 

We will again use the following notation: $A=\mathbb B[t_1,\ldots,t_n],
\
S=\mathbb B[x_0,\ldots,x_n],$
and 
\begin{equation}\label{eq: dehomo}
\theta:(S_{x_0})_0\xrightarrow{\sim}A, \ \theta(x_i/x_0)=t_i \quad (\textrm{dehomogenization isomorphism)}.
\end{equation}

We begin by showing that there is a bijection between congruences on $A$ and the homogeneous $x_0$-saturated congruences on $S$. 

\begin{proposition}
\label{prop:congruence-hom-dehom}
Let $\operatorname{Cong}(A)$ denote the set of congruences on $A$, and let
$\operatorname{Cong}^h_{\operatorname{sat}}(S)$ denote the set of
homogeneous $x_0$-saturated congruences on $S$. The following function:
\begin{equation}
\varphi:\operatorname{Cong}(A)\longrightarrow
\operatorname{Cong}^h_{\operatorname{sat}}(S),
\qquad
\varphi(C)=(C^h:x_0^\infty),
\end{equation}
where $C^h=\{(F\sim G)^h \mid (F,G) \in C\}$ (viewed $C$ in $S \times S$), is an order-preserving bijection with the following inverse:
\begin{equation}
\psi:\operatorname{Cong}^h_{\operatorname{sat}}(S)
\longrightarrow\operatorname{Cong}(A),
\qquad
\psi(D)=\theta(D_{(x_0)}).
\end{equation}
\end{proposition}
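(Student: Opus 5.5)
The plan is to verify four things in turn: $\varphi$ and $\psi$ land in the stated sets, $\psi\circ\varphi=\mathrm{id}$, $\varphi\circ\psi=\mathrm{id}$, and both maps preserve order. Here $C^h$ is read as the congruence generated by the homogenized relations, as in the definition of homogenization of congruences. That $\varphi(C)$ is a homogeneous congruence follows from Lemma~\ref{lemma: saturation is cong}. It is $x_0$-saturated because saturating twice gives nothing new: if $(x_0^MF,x_0^MG)\in(C^h:x_0^\infty)$, then $(x_0^{M+N}F,x_0^{M+N}G)\in C^h$. That $\psi(D)$ is a congruence on $A$ follows because $D_{(x_0)}$ is a congruence on $(S_{x_0})_0$ (restriction lemma) and $\theta$ is an isomorphism. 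Monotonicity is immediate: homogenization, saturation, localization and restriction all preserve inclusions.

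For $\psi\circ\varphi=\mathrm{id}$, I will repeat the argument of Lemma~\ref{lemma: main lemma} with $\operatorname{Bend}(P)$ replaced by an arbitrary congruence $C$; nothing in that proof used the bend structure. For $\supseteq$, dehomogenizing $(F\sim G)^h$ after dividing by $x_0^D$ returns $F\sim G$. For $\subseteq$, write a degree-zero relation as $F/x_0^d\sim G/x_0^d$ with $F,G$ homogeneous of degree $d$. Use Theorem~\ref{theorem: Tanaka}(4) to get $(x_0^mF,x_0^mG)\in\varphi(C)$, and then saturation to land in $C^h$. Finally apply $\delta$ (set $x_0=1$), using $C^h\subseteq\delta^{-1}(C)$.

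The direction $\varphi\circ\psi=\mathrm{id}$ is the main work. Let $D$ be homogeneous and saturated, and set $C=\psi(D)$. For $\varphi(C)\subseteq D$, it suffices, since $D$ is saturated, to show $C^h\subseteq D$. Take $(f,g)\in C$. Then $\theta^{-1}(f)\sim\theta^{-1}(g)$ lies in $D_{(x_0)}$, so $\theta^{-1}(f)=F/x_0^e$ and $\theta^{-1}(g)=G/x_0^e$ with $F,G$ homogeneous of a common degree $e$. By Theorem~\ref{theorem: Tanaka}(4), $(x_0^mF,x_0^mG)\in D$, and saturation gives $(F,G)\in D$. I then need $F=x_0^{e-D}(x_0^{D-\deg f}f^h)$ and similarly for $G$, where $D=\max(\deg f,\deg g)$. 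Then $x_0^{e-D}$ times $(f\sim g)^h$ equals $(F,G)$, and saturation again gives $(f\sim g)^h\in D$. The identity holds because a homogeneous polynomial of degree $e$ is determined by its dehomogenization, and $e\ge D$. Some care is needed with zero sides, which I treat separately: if $f=0$ then $F=0$.

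For $D\subseteq\varphi(C)$, take $(F,G)\in D$ and use homogeneity of $D$ (Theorem~\ref{theorem: congruence homogeneous elts}) to reduce to $F,G$ homogeneous of the same degree $d$; compatibility with addition then reassembles the components. The images $f=\theta(F/x_0^d)$ and $g=\theta(G/x_0^d)$ satisfy $(f,g)\in C$. By the identity above, $(F,G)=x_0^{k}(f\sim g)^h$ for some $k\ge0$, so $(F,G)\in C^h\subseteq\varphi(C)$.

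I expect the main obstacle to be this bookkeeping. It requires checking that $x_0$-powers relate an arbitrary homogeneous pair to the homogenized relation of its dehomogenization, and that the pair $F,G$ has common degree $d\ge\max(\deg f,\deg g)$ once the $x_0$-divisible parts are accounted for. The zero-polynomial cases also need attention.
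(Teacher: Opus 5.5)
Your proposal is correct and follows essentially the same route as the paper. You prove $\psi\circ\varphi=\mathrm{id}$ by the argument of Lemma~\ref{lemma: main lemma} (Theorem~\ref{theorem: Tanaka}(4), then saturation, then $\delta$ with $C^h\subseteq\delta^{-1}(C)$), and $\varphi\circ\psi=\mathrm{id}$ by showing $\psi(D)^h\subseteq D$ via saturation and, conversely, by reducing to homogeneous components, each an $x_0$-power multiple of a homogenized relation. The only cosmetic difference is that you use a common denominator $x_0^e$ together with the identity $F=x_0^{e-\deg f}f^h$, whereas the paper works with the representatives $f^h/x_0^p$ and $g^h/x_0^q$ and cross-multiplies.
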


\begin{proof}
For notational convenience, we let $X=\operatorname{Cong}^h_{\operatorname{sat}}(S).$ The order-preserving part is clear. The assignments are well defined: $C^h$ is homogeneous, its
$x_0$-saturation remains homogeneous and is $x_0$-saturated by
construction, while $\theta(D_{(x_0)})$ is a congruence on $A$. We first show that
$C=\psi(\varphi(C))$.

Let $C$ be a congruence on $A$ and $(f,g)\in C$. The homogenization
$(f\sim g)^h$ lies in
\[
C^h\subseteq(C^h:x_0^\infty)=\varphi(C),
\]
and dehomogenizes to $f\sim g$. Hence
$C\subseteq\psi(\varphi(C)).$

Conversely, let a relation in $\psi(\varphi(C))$ be given. Before
applying $\theta$, write it in $(S_{x_0})_0$ as\footnote{We read this in $(S_{x_0})_0$ via \eqref{eq: dehomo}.}
\[
\frac{F}{x_0^d}\sim\frac{G}{x_0^d},
\]
where $F$ and $G$ are homogeneous of degree $d$. By  Theorem~\ref{theorem: Tanaka} (4), there exists
$m\geq0$ such that
\[
x_0^mF\sim_{\varphi(C)}x_0^mG.
\]

Since
$\varphi(C)=(C^h:x_0^\infty),$
there exists $N\geq0$ such that
$x_0^{N+m}F\sim_{C^h}x_0^{N+m}G.$
Let
\[
\delta:S\longrightarrow A,
\qquad
\delta(x_0)=1,\quad \delta(x_i)=t_i.
\]
The congruence $\delta^{-1}(C)$ contains every generating relation of
$C^h$, because the dehomogenization of each homogenized relation in
$C$ is the original relation. Hence
$C^h\subseteq\delta^{-1}(C).$
Applying $\delta$ gives
$\delta(F)\sim_C\delta(G).$
Since
\[
\theta\left(\frac{F}{x_0^d}\right)=\delta(F)
\qquad\text{and}\qquad
\theta\left(\frac{G}{x_0^d}\right)=\delta(G),
\]
the original relation lies in $C$. Therefore
$\psi(\varphi(C))\subseteq C,$
and hence
$\psi(\varphi(C))=C.$

Let $D\in X$, i.e., $D$ is a homogeneous and $x_0$-saturated
congruence on $S$. We show that $\varphi(\psi(D))=D$. Suppose
$f\sim g\in\psi(D)$, and write
\[
p=\deg(f),\qquad q=\deg(g),\qquad m=\max\{p,q\}.
\]
We use the conventions $\deg(0)=0$ and $0^h=0$. In $(S_{x_0})_0$, the two sides are represented by
$f^h/x_0^p$ and $g^h/x_0^q$. By Theorem~\ref{theorem: Tanaka}(4), applied to localization at the
powers of $x_0$, there exists $N\geq0$ such that
\begin{equation}\label{eq: prop722}
x_0^{N+q}f^h\sim_Dx_0^{N+p}g^h.
\end{equation}
With $L=N+p+q-m$,  we can write \eqref{eq: prop722} as\footnote{Here, we use the notation as in \eqref{eq: homoge notation}.}
\[
x_0^L(f\sim g)^h.
\]
Since $D$ is $x_0$-saturated, $(f\sim g)^h\in D$.

Thus every generating relation of $\psi(D)^h$ lies in $D$, and hence
$\psi(D)^h\subseteq D.$
Since $D$ is $x_0$-saturated, it follows that
\[
\varphi(\psi(D))
=
(\psi(D)^h:x_0^\infty)
\subseteq D.
\]

For the reverse inclusion, let $F\sim_DG$ be an arbitrary relation,
and write
\[
F=\sum_dF_d,\qquad G=\sum_dG_d
\]
for their homogeneous decompositions. Since $D$ is homogeneous,
$F_d\sim_DG_d$
for every $d$ by Theorem \ref{theorem: congruence homogeneous elts}. It is therefore enough to prove that every homogeneous
relation $F_d\sim_DG_d$ belongs to $\varphi(\psi(D))$, and then add
these relations degree by degree.

Thus let $F\sim_DG$ be homogeneous, with both sides of degree $d$.
Let $f$ and $g$ be their dehomogenizations, and let
$p=\deg(f),\ q=\deg(g),\ m=\max\{p,q\}.$
Then
\[
F=x_0^{d-p}f^h,\qquad G=x_0^{d-q}g^h.
\]
After localizing at $x_0$, dividing both sides by $x_0^d$, and applying
$\theta$, the relation $F\sim_DG$ gives
\[
f\sim_{\psi(D)}g.
\]
Consequently,
$(f\sim g)^h\in\psi(D)^h
\subseteq\varphi(\psi(D)).$ Since
$(F\sim G)=x_0^{d-m}(f\sim g)^h,$
we obtain
$F\sim G\in\varphi(\psi(D)).$ Applying this argument to every homogeneous component and adding the
resulting relations gives
$D\subseteq\varphi(\psi(D)).$
Therefore $\varphi(\psi(D))=D$.
\end{proof}

\begin{lemma}
\label{lem:prime-separation-saturated}
Let $R$ be an idempotent semiring, let $E$ be a
proper congruence on $R$, and let $u\in R$. Suppose that
$(E:u^\infty)=E.$
If $P\supseteq E$ is a prime congruence with $(u,0)\in P$, then there
exists a prime congruence $Q$ such that
$E\subseteq Q\subsetneq P
\ \text{and}\ (u,0)\not\in Q.$ 
\end{lemma}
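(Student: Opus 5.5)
The plan is a Zorn's lemma construction of $Q$ inside $P$, in the spirit of the classical proof that a prime containing a multiplicatively closed set's complement can be shrunk. Strictness will be automatic: any $Q\subseteq P$ with $(u,0)\notin Q$ satisfies $Q\subsetneq P$, since $(u,0)\in P$. So the real content is producing a prime $Q$ with $E\subseteq Q\subseteq P$ and $(u,0)\notin Q$.

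\emph{Step 1 (saturation gives a starting point).} First I check that no power of $u$ is $E$-equivalent to $0$. Suppose $(u^N,0)\in E$. Then $(u^N\cdot 1,u^N\cdot 0)\in E$, so $(1,0)\in(E:u^\infty)=E$, and $E$ would be improper. More generally, $(E:u^\infty)=E$ says exactly that $R/E\to (R/E)_{\bar u}$ is injective; this is the argument of the lemma preceding the cuspidal cubic example in Section~\ref{example: plane curve}.

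\emph{Step 2 (Zorn).} Consider the family $\mathcal F$ of congruences $C$ with $E\subseteq C\subseteq P$ and $(u^n,0)\notin C$ for all $n\ge 0$. It is nonempty by Step 1, and the union of a chain in $\mathcal F$ is a congruence lying in $\mathcal F$. Let $Q$ be a maximal element.

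\emph{Step 3 (primality of $Q$), the main obstacle.} Let $\alpha\beta\in Q$ (twisted product) with $\alpha,\beta\notin Q$. Since $\alpha\beta\in P$ and $P$ is prime, at least one of them, say $\alpha$, lies in $P$. Then $Q':=\langle Q,\alpha\rangle\subseteq P$, so by maximality some $(u^n,0)\in Q'$.

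If also $\beta\in P$, then symmetrically $(u^m,0)\in\langle Q,\beta\rangle$. I would then try to show $(u^{n+m},0)$, or a twisted product of the two witnessing chains, lies in $\langle Q,\alpha\beta\rangle=Q$, using the identity $(a,0)(b,0)=(ab,0)$ and the ``product of congruences'' computation $V(C)\cup V(D)=V(CD)$ from Lemma~\ref{lemma: zariski}, now applied to the congruences $\langle Q,\alpha\rangle$ and $\langle Q,\beta\rangle$. This would give a contradiction.

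If $\beta\notin P$, I would exploit that $R/P$ is totally ordered and cancellative (\cite[Proposition~2.14]{Tanaka26}). The aim is to replace $\beta$ by a multiplier that can be cancelled, reducing to the previous case.

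This step is where I expect the difficulty. Twisted products do not interact as cleanly with generated congruences as ideal products do, so the descriptions of $\langle Q,\alpha\rangle$ must be handled carefully; I would try to work with explicit finite chains of generating relations.

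\emph{Fallback.} If the Zorn argument resists, I would instead use Theorem~\ref{theorem: Tanaka}(2),(3). Step 1 shows $(R/E)_{\bar u}$ is a nonzero semiring, and $V(E)\cap D(u)\cong\operatorname{SpecCong}((R/E)_{\bar u})$. I would then produce a prime there whose contraction lies below $P$ by a ``generization'' construction. Concretely, take the total order on $R/P$ and refine it lexicographically by a secondary order in which $\bar u$ becomes nonzero, as in the face/stratum description \cite[Lemma~4.14]{Tanaka26}, and pull the resulting ordered quotient back to $R$.
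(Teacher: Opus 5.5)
Your proposal has a genuine gap in Step 3. With the family $\mathcal F$ you chose, Step 3 is false, not merely hard, so Zorn's lemma can hand you a non-prime $Q$.

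Take $R=\mathbb B[t]$, $E=\Delta$, $u=t$, and let $P$ be the kernel of $t\mapsto 0$ into $\mathbb B$, so $f\sim_P g$ iff $f,g$ have the same constant term. All hypotheses of the lemma hold. Let $L=\{0<t<1\}$ with $+=\max$ and $\cdot=\min$, and let $C'$ be the kernel of $\mathbb B[t]\to L$, $t\mapsto t$; equivalently $C'=\langle t\sim t^2,\ 1+t\sim 1\rangle$.

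\begin{itemize}
\item $C'\in\mathcal F$: one has $E\subseteq C'\subseteq P$, and no $t^n$ is $C'$-equivalent to $0$.
\item $C'$ is maximal in $\mathcal F$: the only non-diagonal congruences on $L$ identify $0\sim t$, or $t\sim 1$, or everything. Identifying $0\sim 1$ forces $t=\min(t,1)\sim\min(t,0)=0$. The first contains $(t,0)$, and the other two are not contained in $P$.
\item $C'$ is not prime: $(t,0)(1,t)=(t,t^2)\in C'$ while $(t,0),(1,t)\notin C'$.
\end{itemize}

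This is exactly your second case, with $\alpha=(t,0)\in P$ and $\beta=(1,t)\notin P$. Maximality only gives $(u^n,0)\in\langle Q,\alpha\rangle$. The pair $(u^n,0)\beta$ is not forbidden by $\mathcal F$, since it lies in $P$. No cancellation in $R/P$ can help, because $\overline u=0$ there.

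Your first case also fails as written. It leans on the computation $V(C)\cup V(D)=V(CD)$ from Lemma~\ref{lemma: zariski}, but that argument assumes the congruence containing $CD$ is already prime, so using it to prove $Q$ prime is circular.

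The missing idea is to forbid everything an admissible $Q$ is forced to avoid. If $Q$ is prime with $Q\subseteq P$ and $(u,0)\notin Q$, then primality makes $Q$ avoid the whole set $\Sigma=\{(u^m,0)\alpha : m\ge 0,\ \alpha\notin P\}$ of twisted products. This is the set the paper uses, working in $\overline R=R/E$.
\begin{itemize}
\item $\Sigma$ is closed under twisted products because $P$ is prime.
\item $\Sigma$ misses the diagonal because saturation makes multiplication by $\overline u^{\,m}$ injective. This is the real role of $(E:u^\infty)=E$; in your setup saturation only enters through the much weaker fact $u^n\not\sim_E 0$.
\item Take $\overline Q$ maximal among congruences disjoint from $\Sigma$. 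Any $\Phi_1,\Phi_2\supsetneq\overline Q$ contain some $\sigma_i\in\Sigma$, and $\sigma_1\sigma_2\in\Sigma$, so $\Phi_1\cdot\Phi_2\not\subseteq\overline Q$. The paper converts this product criterion into pairwise twisted-product primality via Rowen's comparison for positive $e$-type. That citation absorbs the generated-congruence bookkeeping you were worried about.
\item Taking $m=0$ gives $\overline Q\subseteq\overline P$. Taking $m=1$ and $\alpha=(1,0)$ gives $(\overline u,0)\notin\overline Q$, hence strictness.
\end{itemize}
In the example above, $(t,t^2)=(t,0)(1,t)\in\Sigma$, so $C'$ is correctly excluded.

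Your fallback does not close the gap either. The face and lexicographic description you invoke is specific to monoid semirings $\mathbb B[M]$. For a general idempotent $R$, producing a prime of $(R/E)_{\overline u}$ whose contraction lies inside $P$ is precisely the content of the lemma.
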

\begin{proof}
First observe that the hypotheses imply $(u,0)\notin E$. Indeed, if
$u\sim_E0$, then for every $a,b\in R$ one has
$ua\sim_E0\sim_Eub,$
and therefore
$(a,b)\in(E:u^\infty)=E.$
This would make $E$ the improper congruence, contrary to hypothesis.
In particular, $u\neq0$.

Pass to
$\overline R:=R/E,$
and denote the images of $u$ and $P$ by $\overline u$ and
$\overline P$. The saturation hypothesis says that multiplication by
every power of $\overline u$ is injective on $\overline R$. Indeed, if
$\overline u^{\,m}\overline a = \overline u^{\,m}\overline b,$
then $(u^ma,u^mb)\in E$, and hence
$(a,b)\in(E:u^\infty)=E.$

All products of pairs below are twisted products. Define
\[
\Sigma
:=
\left\{
(\overline u^m,0)\alpha \mid
m\geq0,\ 
\alpha\in(\overline R\times\overline R)\setminus\overline P
\right\}.
\]
We first show that
$\Sigma\cap\Delta_{\overline R}=\varnothing.$
Indeed, if
$(\overline u^m,0)(\alpha_1,\alpha_2)$
is diagonal, then
$\overline u^m\alpha_1 = \overline u^m\alpha_2.$
Injectivity of multiplication by $\overline u^m$ gives
$\alpha_1=\alpha_2$, contrary to
$(\alpha_1,\alpha_2)\notin\overline P$.

The set $\Sigma$ is closed under twisted products. If
$\sigma_j=(\overline u^{m_j},0)\alpha_j\in\Sigma
\ , \ (j=1,2),$
then
$\sigma_1\sigma_2 = (\overline u^{m_1+m_2},0)(\alpha_1\alpha_2).$
Since $\overline P$ is prime and
$\alpha_1,\alpha_2\notin\overline P$, one has
$\alpha_1\alpha_2\notin\overline P$. Hence
$\sigma_1\sigma_2\in\Sigma$.

Consider the collection of congruences on $\overline R$ that are
disjoint from $\Sigma$, ordered by inclusion. This collection is
nonempty because $\Delta_{\overline R}\cap\Sigma=\varnothing$. The
union of any chain in this collection is again a congruence disjoint
from $\Sigma$. Thus, by Zorn's lemma, there exists a congruence
$\overline Q$ maximal among those disjoint from $\Sigma$.

We claim that $\overline Q$ satisfies the congruence-product
primality criterion. For congruences $\Phi_1,\Phi_2$ on $\overline R$,
write
\[
\Phi_1\cdot_{\mathrm{tw}}\Phi_2
:=
\{\alpha\beta \mid \alpha\in\Phi_1,\ \beta\in\Phi_2\}.
\]
Suppose that
$\overline Q\subsetneq\Phi_1, \ \overline Q\subsetneq\Phi_2.$
By maximality of $\overline Q$, both $\Phi_1$ and $\Phi_2$ meet
$\Sigma$. Choose
$\sigma_i\in\Phi_i\cap\Sigma
\ , \ (i=1,2).$
Since $\Sigma$ is closed under twisted products,
$\sigma_1\sigma_2\in\Sigma,$
while
$\sigma_1\sigma_2 \in \Phi_1\cdot_{\mathrm{tw}}\Phi_2.$
Consequently,
$\Phi_1\cdot_{\mathrm{tw}}\Phi_2
\not\subseteq\overline Q.$
Thus, whenever
$\Phi_1\cdot_{\mathrm{tw}}\Phi_2
\subseteq\overline Q$
for congruences $\Phi_1,\Phi_2\supseteq\overline Q$, one has
$\Phi_1=\overline Q$ or $\Phi_2=\overline Q$. By
Remark~\ref{rem:Rowen-primality-comparison} below, this is equivalent,
in the present idempotent setting, to the pairwise twisted-product
primality used throughout this paper. Hence $\overline Q$ is a prime
congruence.

Taking $m=0$ shows that every pair outside $\overline P$ belongs to
$\Sigma$. Since $\overline Q\cap\Sigma=\varnothing$, it follows that
$\overline Q\subseteq\overline P.$
Moreover,
$(\overline u,0)\in\Sigma,$
by taking $m=1$ and
$\alpha=(1,0)\notin\overline P$. Hence
$(\overline u,0)\notin\overline Q.$
Since $(\overline u,0)\in\overline P$, we obtain
$\overline Q\subsetneq\overline P.$
Pulling $\overline Q$ back along $R\to\overline R$ gives the required
prime congruence $Q$.
\end{proof}

\begin{rmk}
\label{rem:Rowen-primality-comparison}
The maximal-disjoint argument in the preceding proof is adapted from
\cite[Lemma~4.12]{Rowen24}. Rowen called a congruence $\Phi$
\emph{prime} when
$\Phi_1\cdot_{\mathrm{tw}}\Phi_2\subseteq\Phi$
for congruences $\Phi_1,\Phi_2\supseteq\Phi$ implies
$\Phi_1=\Phi$ or $\Phi_2=\Phi$, whereas he called the pairwise
condition
\[
\alpha\beta\in\Phi
\Longrightarrow
\alpha\in\Phi\ \text{or}\ \beta\in\Phi
\]
\emph{strong primality}; see
\cite[Definition~4.8(iv)--(v)]{Rowen24}.

To compare these notions here, regard $\overline R$ as the degenerate
pair $(\overline R,\overline R)$ of
\cite[Example~2.9(ii)]{Rowen24}, equipped with the identity negation
map. In Rowen's notation, $a^\dagger$ denotes the image of $a$ under
the negation map, and
$a^\circ:=a+a^\dagger$
is the corresponding quasi-zero. Since the negation map is the
identity in the present case, $a^\dagger=a$. Moreover, since
$\overline R$ is idempotent,
\[
e:=1+1=1,
\qquad
a^\circ=a+a=a,
\qquad
a+a^\circ=a^\circ.
\]
Thus the pair has positive $e$-type, in fact $e$-type $1$. Rowen
states in \cite[Definition~4.8(v)]{Rowen24} that prime and strongly
prime congruences coincide for commutative semiring pairs of positive
$e$-type. Therefore the congruence-product criterion used in the
preceding proof gives precisely the pairwise twisted-product notion of
prime congruence used throughout this paper.
\end{rmk}

\begin{proposition}
\label{prop:saturated-principal-open-dense}
Let $R$ be an idempotent semiring, let $E$ be a
congruence on $R$, and let $u\in R$. If
$(E:u^\infty)=E,$
then
$V(E)\cap D(u)$
is dense in $V(E)$, where
\[
D(u):=\{Q\in\operatorname{SpecCong}(R):(u,0)\notin Q\}.
\]
\end{proposition}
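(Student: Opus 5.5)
We argue directly from the definition of the Zariski topology in Lemma \ref{lemma: zariski}. The closed subsets of $V(E)$ are exactly the sets $V(C)\cap V(E)=V(\langle C\cup E\rangle)$. Hence $V(E)\cap D(u)$ is dense in $V(E)$ precisely when every closed set $V(C')$ with $C'\supseteq E$ that contains $V(E)\cap D(u)$ also contains all of $V(E)$. So the plan is to fix such a congruence $C'\supseteq E$ with $V(E)\cap D(u)\subseteq V(C')$, take an arbitrary prime $P\in V(E)$, and show $C'\subseteq P$.

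First we dispose of degenerate cases. If $E$ is improper, then $V(E)=\varnothing$ and there is nothing to prove. So assume $E$ is proper. Now take $P\in V(E)$ and split into two cases.

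\textbf{Case $(u,0)\notin P$.} Then $P\in V(E)\cap D(u)\subseteq V(C')$, and we are done.

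\textbf{Case $(u,0)\in P$.} Here we invoke Lemma \ref{lem:prime-separation-saturated}, whose hypotheses are exactly ours: $E$ is proper, $(E:u^\infty)=E$, $P\supseteq E$ is prime, and $(u,0)\in P$. It produces a prime congruence $Q$ with $E\subseteq Q\subsetneq P$ and $(u,0)\notin Q$. Thus $Q\in V(E)\cap D(u)\subseteq V(C')$, so $C'\subseteq Q\subseteq P$, and therefore $P\in V(C')$.

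In both cases $P\in V(C')$, so $V(E)\subseteq V(C')$ and the closure of $V(E)\cap D(u)$ is all of $V(E)$. Equivalently, every $P\in V(E)$ lies in $\overline{\{Q\}}=V(Q)$ for some $Q\in V(E)\cap D(u)$, since $Q\subseteq P$.

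The substantive step is the generization provided by Lemma \ref{lem:prime-separation-saturated}, which is already established, so the main obstacle has been handled there. The remaining point to check is only that the closed subsets of $V(E)$ in the subspace topology are exactly the $V(C)\cap V(E)$ described above. This is immediate from Lemma \ref{lemma: zariski}, since $V(C)\cap V(E)=V(\langle C\cup E\rangle)$.
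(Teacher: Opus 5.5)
Your proposal is correct and is essentially the paper's own proof: dispose of the improper case, split on whether $(u,0)\in P$, and in the nontrivial case use Lemma \ref{lem:prime-separation-saturated} to produce $Q\in V(E)\cap D(u)$ with $Q\subseteq P$. Your closed-set formulation (any $V(C')\supseteq V(E)\cap D(u)$ with $C'\supseteq E$ must contain $P$) just unpacks the paper's observation that $P\in V(Q)=\overline{\{Q\}}$.
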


\begin{proof}
The assertion is immediate if $V(E)=\varnothing$, so assume that $E$
is proper. Let $P\in V(E)$. If $(u,0)\notin P$, then
$P\in V(E)\cap D(u)$. If $(u,0)\in P$, Lemma
\ref{lem:prime-separation-saturated} gives a prime congruence $Q$ with
$E\subseteq Q\subsetneq P
\ \text{and}\ 
(u,0)\notin Q.$
Thus $Q\in V(E)\cap D(u)$. Moreover, by definition
\[
\overline{\{Q\}}=V(Q),
\]
and $Q\subseteq P$ implies $P\in V(Q)$. Since
$\{Q\}\subseteq V(E)\cap D(u)$, it follows that
$P\in\overline{V(E)\cap D(u)}. $ Hence every point of $V(E)$
lies in the closure of $V(E)\cap D(u)$.
\end{proof}

We need the following lemma to move from the affine case to the projective case. 

\begin{lemma}
\label{lem:chartwise-saturation}
Let $C$ be a homogeneous $x_0$-saturated congruence on
$S=\mathbb B[x_0,\ldots,x_n]$. For each $i$, set
$R_i:=(S_{x_i})_0, \
E_i:=C_{(x_i)}, \
u_i:=\frac{x_0}{x_i}\in R_i.$
Then $E_i$ is $u_i$-saturated.
\end{lemma}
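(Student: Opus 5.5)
We have to show: if $\alpha,\beta\in R_i$ satisfy $(u_i^m\alpha,u_i^m\beta)\in E_i$ for some $m\geq0$, then $(\alpha,\beta)\in E_i$. The approach is to translate this degree-zero relation back into a homogeneous relation in $S$. There we can cancel powers of $x_0$ using saturation of $C$, and then pass back to $R_i$.

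First we clear denominators. Since $\alpha,\beta$ have degree zero, we write them over a common denominator as $\alpha=F/x_i^d$ and $\beta=G/x_i^d$, with $F,G\in S$ homogeneous of degree $d$. Then
\[
u_i^m\alpha=\frac{x_0^mF}{x_i^{d+m}},\qquad u_i^m\beta=\frac{x_0^mG}{x_i^{d+m}}.
\]
The relation $(u_i^m\alpha,u_i^m\beta)\in E_i=C_{(x_i)}\subseteq C_{x_i}$ now comes in. By Theorem~\ref{theorem: Tanaka}(4), applied to localization at the powers of $x_i$ with both denominators equal to $x_i^{d+m}$, there is some $k\geq0$ such that
\[
(x_i^{k}x_0^{m}F,\;x_i^{k}x_0^{m}G)\in C.
\]

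Next we cancel $x_0^m$. By definition $(C:x_0^\infty)=C$, so we want to read the displayed relation as an $x_0^m$-multiple of $(x_i^kF,x_i^kG)$. That is exactly what it is, since $x_i^k x_0^m F = x_0^m(x_i^kF)$. Saturation therefore gives $(x_i^kF,x_i^kG)\in C$. Homogeneity of $C$ is not even needed at this step, because the saturation condition quantifies over all pairs.

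Finally we return to the chart. From $(x_i^kF,x_i^kG)\in C$ we get
\[
\frac{F}{x_i^d}=\frac{x_i^kF}{x_i^{d+k}}\sim_{C_{x_i}}\frac{x_i^kG}{x_i^{d+k}}=\frac{G}{x_i^d}.
\]
Both sides have degree zero, so the pair lies in $C_{(x_i)}=E_i$. Hence $(\alpha,\beta)\in E_i$.

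The only point that needs care is the first step: extracting a genuine relation in $C$ from membership in the degree-zero localization. This is handled entirely by Theorem~\ref{theorem: Tanaka}(4), together with the observation that the two representatives can be chosen with the same denominator $x_i^{d+m}$. The case $i=0$ is trivial, since then $u_0=1$.
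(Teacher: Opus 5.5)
Your proof is correct and follows the paper's argument essentially step for step. You write $\alpha,\beta$ over the common denominator $x_i^d$ and use Theorem~\ref{theorem: Tanaka}(4) to obtain $x_i^k x_0^m F\sim_C x_i^k x_0^m G$; you then cancel $x_0^m$ by the $x_0$-saturation of $C$ and relocalize at $x_i$ to get $(\alpha,\beta)\in E_i$, which is exactly the paper's route, and your side remarks (homogeneity is not used in the cancellation step, and $i=0$ is trivial since $u_0=1$) are accurate.
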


\begin{proof}
Let $\alpha,\beta\in R_i$. After choosing a common denominator, write
$\alpha=\frac{F}{x_i^d}, \
\beta=\frac{G}{x_i^d},$
with $F$ and $G$ homogeneous of degree $d$. Suppose that
$u_i^N\alpha\sim_{E_i}u_i^N\beta.$
Since
\[
u_i^N\alpha=\frac{x_0^NF}{x_i^{d+N}}
\qquad\text{and}\qquad
u_i^N\beta=\frac{x_0^NG}{x_i^{d+N}},
\]
Theorem~\ref{theorem: Tanaka} (4), applied to localization at the
powers of $x_i$, gives some $m\geq0$ such that
\begin{equation}
x_i^mx_0^NF\sim_Cx_i^mx_0^NG.    
\end{equation}
Since $C$ is $x_0$-saturated, we have
$x_i^mF\sim_Cx_i^mG.$
Localizing at $x_i$ and passing to degree zero gives
$\alpha\sim_{E_i}\beta$. Thus $(E_i:u_i^\infty)=E_i$.
\end{proof}

Now, we prove our main theorem of this section that the projective closure is the topological closure. 

Recall that by the definition, with $S=\mathbb{B}[x_0,\dots,x_n]$ and $A=\mathbb{B}[t_1,\dots,t_n]$, we have
\begin{equation}
  U_0=\operatorname{SpecCong}((S_{x_0})_0)  
\end{equation}
and the
dehomogenization isomorphism
\begin{equation}\label{eq: identification}
(S_{x_0})_0\cong\mathbb B\left[\frac{x_1}{x_0},\ldots,\frac{x_n}{x_0}\right]\cong\mathbb B[t_1,\ldots,t_n], \quad \text{where } t_j=\frac{x_j}{x_0}.
\end{equation}
induces a canonical homeomorphism
\begin{equation}\label{eq: homeo}
\operatorname{SpecCong}(A)\xrightarrow{\sim}U_0\subseteq\Proj.
\end{equation}
Let $C$ be congruence on $A$. Then, we may consider $V(C)$ as a subspace of $\Proj$ through \eqref{eq: homeo}.

\begin{theorem}\label{theorem: topological closure}
\label{thm:projective-congruence-closure}
With the same notation as above, we have
\[
V_+(C^{\mathrm{proj}})
=
\overline{V(C)}^{\,\Proj},
\]
where $\overline{V(C)}^{\,\Proj}$ denotes the topological
closure of $V(C)$ in $\Proj$. In particular, for every ideal $P\subseteq A$, one has
\[
\operatorname{Proj}(P) := V_+\!\left(C_P^{\mathrm{proj}}\right)
=\overline{V(\operatorname{Bend}(P))}^{\,\Proj}.
\]
Thus the projective bend locus $V(\Bend(P))$ is the topological projective closure of the affine bend locus.
\end{theorem}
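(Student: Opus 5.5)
The plan is to prove the two inclusions separately, working chart by chart, using the bijection of Proposition~\ref{prop:congruence-hom-dehom} together with the density result Proposition~\ref{prop:saturated-principal-open-dense} and Lemma~\ref{lem:chartwise-saturation}. Set $D:=C^{\mathrm{proj}}=(C^h:x_0^\infty)$. By Proposition~\ref{prop:congruence-hom-dehom}, $D$ is homogeneous, $x_0$-saturated, and satisfies $\theta(D_{(x_0)})=C$. Hence, exactly as in Lemma~\ref{lemma: main lemma}, $V_+(D)\cap U_0=V(D_{(x_0)})$ corresponds to $V(C)$ under the homeomorphism \eqref{eq: homeo}.

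\emph{Inclusion $\supseteq$.} By Proposition~\ref{proposition: closed proposition}, $V_+(D)$ is closed in $\Proj$. Since it contains $V_+(D)\cap U_0=V(C)$, it contains $\overline{V(C)}^{\,\Proj}$.

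\emph{Inclusion $\subseteq$.} Since the $U_i$ form an open cover of $\Proj$, it suffices to show that every point of $V_+(D)\cap U_i$ lies in the closure of $V(C)$. Closure is local: a point lies in $\overline{V(C)}$ as soon as it lies in the closure, inside the open set $U_i$, of $V(C)\cap U_i$. Fix $i$ and set $R_i=(S_{x_i})_0$, $E_i=D_{(x_i)}$ and $u_i=x_0/x_i$. By Lemma~\ref{lem:chartwise-saturation}, $(E_i:u_i^\infty)=E_i$. Proposition~\ref{prop:saturated-principal-open-dense} then shows that $V(E_i)\cap D(u_i)$ is dense in $V(E_i)=V_+(D)\cap U_i$. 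Under the gluing, $D(u_i)\subseteq U_i$ is exactly the overlap $U_{i0}=U_i\cap U_0$ (Theorem~\ref{theorem: Tanaka}(2)). By the compatibility of the chartwise loci on overlaps,
\[
V(E_i)\cap D(u_i)=V_+(D)\cap U_0\cap U_i=V(C)\cap U_i.
\]
Therefore $V_+(D)\cap U_i\subseteq\overline{V(C)\cap U_i}\subseteq\overline{V(C)}^{\,\Proj}$. Taking the union over $i$ gives the inclusion. The statement for ideals then follows by taking $C=\operatorname{Bend}(P)$.

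\emph{Main obstacle.} The hard step is the identification of $V(E_i)\cap D(u_i)$ inside $U_i$ with $V(C)\cap U_i$ inside $U_0$. This needs two checks:
\begin{itemize}
\item[(a)] The localization of $E_i$ at $u_i$ agrees with $D_{(x_ix_0)}$, which in turn agrees with the localization of $D_{(x_0)}$ at $x_i/x_0$. This is handled with Theorem~\ref{theorem: Tanaka}(4) applied to iterated localizations, using homogeneity of $D$ to keep track of degree-zero parts.
\item[(b)] Via Theorem~\ref{theorem: Tanaka}(3), the prime congruences in $D(u_i)$ containing $E_i$ correspond to the prime congruences of the localized quotient. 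These are the same points as those of $V(D_{(x_0)})\cap D(x_i/x_0)$ in $U_0$.
\end{itemize}
I would verify (a) by an explicit common-denominator computation, namely $F/(x_i^ax_0^b)$ with $F$ homogeneous. With (a) in hand, (b) is formal.
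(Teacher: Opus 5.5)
Your proposal is correct and follows essentially the same route as the paper: closedness of $V_+(C^{\mathrm{proj}})$ (Proposition~\ref{proposition: closed proposition}) together with $V_+(C^{\mathrm{proj}})\cap U_0=V(C)$ gives one inclusion, and chartwise density via Lemma~\ref{lem:chartwise-saturation} and Proposition~\ref{prop:saturated-principal-open-dense} gives the other. The only difference is that you explicitly flag and sketch the overlap identification $V(E_i)\cap D(u_i)=V(C)\cap U_i$, which amounts to $(E_i)_{u_i}=(C^{\mathrm{proj}})_{(x_0x_i)}$ under $(R_i)_{u_i}\cong(S_{x_0x_i})_0$. The paper simply takes this from its remark on compatibility of the chartwise loci, and your common-denominator check of it is sound.
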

\begin{proof}
Set $X:=V_+(C^{\mathrm{proj}})$. By Proposition \ref{proposition: closed proposition}, $X$ is closed in $\Proj$. 
By Proposition~\ref{prop:congruence-hom-dehom} and Lemma \ref{lemma: main lemma}, we also have
\[
X\cap U_0=V(C).
\]
It remains to prove that $X\cap U_0$ is dense in $X$. For each standard chart $U_i$, let
\[
R_i=(S_{x_i})_0,
\quad
E_i=(C^{\mathrm{proj}})_{(x_i)},
\quad
u_i=\frac{x_0}{x_i}.
\]
Then, we have
$X\cap U_i=V(E_i),$
and the standard overlap is
$U_i\cap U_0=D(u_i).$
Consequently,
\begin{equation}
(X\cap U_0)\cap U_i=V(E_i)\cap D(u_i).
\end{equation}
The congruence $C^{\mathrm{proj}}$ is $x_0$-saturated by construction,
so Lemma~\ref{lem:chartwise-saturation} shows that $E_i$ is
$u_i$-saturated. Proposition
\ref{prop:saturated-principal-open-dense} therefore shows that
$V(E_i)\cap D(u_i)$ is dense in $V(E_i)$.

Thus $(X\cap U_0)\cap U_i$ is dense in $X\cap U_i$ for every $i$.
Since the standard charts form an open cover of $X$, and
$(X\cap U_0)\cap U_i$ is dense in $X\cap U_i$ for every $i$, the
subset $X\cap U_0$ is dense in $X$. As $X$ is closed, we have
\[
X =
\overline{X\cap U_0}^{\,\Proj} = \overline{V(C)}^{\Proj}.
\]
Taking $C=\operatorname{Bend}(P)$ gives the final assertion.
\end{proof}

\begin{corollary}
\label{cor:projective-irreducibility}
Let $P\subseteq\mathbb B[t_1,\ldots,t_n]$ be an ideal, and suppose
that $V(\operatorname{Bend}(P))\neq\varnothing$. Then the following
are equivalent:
\begin{enumerate}
\item $\operatorname{Proj}(P)$ is irreducible;
\item $V(\operatorname{Bend}(P))$ is irreducible;
\item $\sqrt[\operatorname{pr}]{\operatorname{Bend}(P)}$ is a prime
congruence.
\end{enumerate}
\end{corollary}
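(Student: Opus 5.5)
The plan is to deduce (1)$\Leftrightarrow$(2) from Theorem~\ref{thm:projective-congruence-closure}, and (2)$\Leftrightarrow$(3) from Proposition~\ref{prop:Tanaka-irreducibility}.

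For (2)$\Leftrightarrow$(3), we apply Proposition~\ref{prop:Tanaka-irreducibility} directly to the congruence $\operatorname{Bend}(P)$ on $A=\mathbb B[t_1,\ldots,t_n]$. The hypothesis $V(\operatorname{Bend}(P))\neq\varnothing$ is exactly what that proposition requires. It yields that $V(\operatorname{Bend}(P))$ is irreducible if and only if $\sqrt[\operatorname{pr}]{\operatorname{Bend}(P)}$ is a prime congruence. Irreducibility is intrinsic to the subspace topology. The homeomorphism \eqref{eq: homeo} identifies $V(\operatorname{Bend}(P))\subseteq\operatorname{SpecCong}(A)$ with a subspace of $U_0\subseteq\Proj$, so it does not matter in which ambient space we regard the set.

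For (1)$\Leftrightarrow$(2), Theorem~\ref{thm:projective-congruence-closure} gives $\operatorname{Proj}(P)=\overline{Y}^{\,\Proj}$, where $Y:=V(\operatorname{Bend}(P))$ is viewed inside $U_0$. We then use the elementary topological fact that a subspace $Y$ of a space $Z$ is irreducible if and only if its closure $\overline Y$ is irreducible.

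\emph{Forward direction ($Y$ irreducible $\Rightarrow$ $\overline Y$ irreducible).} Suppose $\overline Y=F_1\cup F_2$ with $F_1,F_2$ closed. Then $Y=(Y\cap F_1)\cup(Y\cap F_2)$, so irreducibility of $Y$ gives $Y\subseteq F_j$ for some $j$, and hence $\overline Y\subseteq F_j$.

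\emph{Converse ($\overline Y$ irreducible $\Rightarrow$ $Y$ irreducible).} Suppose $Y=(Y\cap G_1)\cup(Y\cap G_2)$ with $G_1,G_2$ closed in $\Proj$. Then $Y\subseteq G_1\cup G_2$, which is closed, so $\overline Y\subseteq G_1\cup G_2$. Hence $\overline Y=(\overline Y\cap G_1)\cup(\overline Y\cap G_2)$, and irreducibility of $\overline Y$ gives $\overline Y\subseteq G_j$ for some $j$, so $Y\subseteq G_j$. Nonemptiness of $Y$ ensures that $\overline Y$ is nonempty, as the definition of irreducibility requires.

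The only point needing care is that the closure in Theorem~\ref{thm:projective-congruence-closure} is taken in $\Proj$, while $Y$ is irreducible or not as a subspace. One must check that the subspace topology on $Y$ induced from $\Proj$ agrees with the one induced from $U_0\cong\operatorname{SpecCong}(A)$. This holds because $U_0$ is an open subspace of $\Proj$: its images form an open cover and the gluing is a quotient, with $U_0\to\Proj$ an open embedding. I expect this compatibility to be the main, though mild, obstacle; everything else is formal.
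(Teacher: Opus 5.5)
Your proposal is correct and follows the paper's proof: you get (1)$\Leftrightarrow$(2) from Theorem~\ref{thm:projective-congruence-closure} together with the fact that a nonempty subspace is irreducible if and only if its closure is, and (2)$\Leftrightarrow$(3) from Proposition~\ref{prop:Tanaka-irreducibility}. You also spell out that topological fact and the compatibility of subspace topologies via the open embedding $U_0\hookrightarrow\Proj$, both of which the paper leaves implicit.
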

\begin{proof}
By Theorem \ref{theorem: topological closure}, we have
\[
\operatorname{Proj}(P)
=
\overline{V(\operatorname{Bend}(P))}^{\Proj}.
\]
A nonempty subspace is irreducible if and only if its closure is
irreducible, so the first two conditions are equivalent. The
equivalence of the second and third conditions follows from
Proposition~\ref{prop:Tanaka-irreducibility}.
\end{proof}

\bibliography{references}\bibliographystyle{alpha}

\end{document}